\documentclass[letterpaper,11pt]{article}
\usepackage[margin=1in]{geometry}
\usepackage{color}
\usepackage{verbatim}
\usepackage{amsmath}
\definecolor{darkred}{rgb}{0.7, 0, 0}
\definecolor{darkgreen}{rgb}{0, 0.7, 0}
\usepackage{amsmath,amsfonts,amssymb,latexsym}
\usepackage{fullpage}
\usepackage{amsmath}
\usepackage{amsthm}
\usepackage{mathtools}
\usepackage{graphicx}
\usepackage{algpseudocode}
\usepackage{placeins}
\usepackage{xcolor}
\usepackage{hyperref}
\usepackage{subcaption}
\usepackage{thmtools}
\usepackage{thm-restate}
\usepackage{tabularx}
\usepackage{dsfont}
\usepackage{placeins}
\usepackage{wrapfig}
\usepackage{todonotes}
\usepackage{enumerate}
\usepackage{float}
\usepackage{xcolor}
\usepackage{indentfirst}
\usepackage{algorithm}
\usepackage{wasysym}
\usepackage{hyperref}
\usepackage{cleveref}
\usepackage{sectsty}
\usepackage{braket}
\usepackage{mathrsfs}
\usepackage{autonum}
\usepackage{bbm}
\usepackage{enumitem}

\DeclareMathOperator{\Ent}{Ent}

\def\N{\mathbb{N}}
\def\R{\mathbb{R}}

\def\cA{\mathcal{A}}
\def\cB{\mathcal{B}}
\def\cC{\mathcal{C}}
\def\cD{\mathcal{D}}
\def\cE{\mathcal{E}}
\def\cF{\mathcal{F}}
\def\cG{\mathcal{G}}
\def\cH{\mathcal{H}}
\def\cK{\mathcal{K}}
\def\cL{\mathcal{L}}
\def\cM{\mathcal{M}}
\def\cN{\mathcal{N}}
\def\cO{\mathcal{O}}

\def\cS{\mathcal{S}}

\def\cW{\mathcal{W}}
\def\cX{\mathcal{X}}
\def\cY{\mathcal{Y}}
\def\cZ{\mathcal{Z}}

\def\fa{\mathfrak{a}}
\def\fB{\mathfrak{B}}

\newcommand{\abs}[1]{\left\lvert#1\right\rvert}
\newcommand{\norm}[1]{\left\|#1\right\|}
\renewcommand{\d}{\mathrm{d}}
\newcommand{\dd}{\,\mathrm{d}}
\newcommand{\E}{\operatorname{\mathbb{E}}}

\newtheorem{theorem}{Theorem}[section]
\newtheorem{lemma}[theorem]{Lemma}
\newtheorem{remark}[theorem]{Remark}
\newtheorem{assumption}[theorem]{Assumption}
\newtheorem{proposition}[theorem]{Proposition}
\newtheorem{corollary}[theorem]{Corollary}

\crefname{theorem}{Theorem}{Theorems}
\crefname{lemma}{Lemma}{Lemmas}
\crefname{remark}{Remark}{Remarks}
\crefname{section}{Section}{Sections}
\crefname{assumption}{Assumption}{Assumptions}
\crefname{proposition}{Proposition}{Propositions}
\crefname{corollary}{Corollary}{Corollaries}
\crefname{figure}{Figure}{Figures}

\begin{document}

\title{Bayesian statistical inverse problems for a coupled Fokker--Planck--Darcy system}

\author{
Grigorios A. Pavliotis\thanks{Department of Mathematics, Imperial College London, London SW7 2AZ, UK, \texttt{g.pavliotis@imperial.ac.uk}.}
\and
Andrew M. Stuart\footnote{Computing and Mathematical Sciences, California Institute of Technology, Pasadena, CA, USA, \texttt{astuart@caltech.edu}.}
\and
Andrea Zanoni\footnote{Centro di Ricerca Matematica Ennio De Giorgi, Scuola Normale Superiore, Pisa, Italy, \texttt{andrea.zanoni@sns.it}.}
}
\date{}

\maketitle

\begin{abstract}
We study the nonparametric statistical inverse problem of recovering the space-dependent permittivity in a coupled Fokker--Planck--Darcy system from discrete, noisy observations of the Fokker--Planck solution. We consider the coupled parabolic-elliptic system on a bounded domain with the physically natural no-flux boundary condition for the Fokker--Planck equation and homogeneous Dirichlet boundary conditions for the Darcy equation. The inverse problem is indirect, since the unknown coefficient enters only through the elliptic equation and is observed only through its effect on the density in the parabolic equation. We first develop the analytical theory of the forward problem required for the statistical analysis, establishing well-posedness and a priori bounds uniform over the admissible set of permittivities. We then derive two stability estimates for the inverse problem, namely a Lipschitz-type forward estimate and a generalized backward estimate. Placing a rescaled Gaussian process prior on the log-shifted permittivity, we show that the posterior contracts around the truth at an explicit polynomial rate in the number of observations, with the posterior mean converging at the same rate. Numerical experiments in one and two dimensions, using preconditioned Crank--Nicolson and ensemble Kalman filter algorithms, complement our theoretical results.
\end{abstract}

%\tableofcontents

\section{Introduction} \label{sec:introduction}

Inverse problems for partial differential equations (PDEs) arise in many applications, ranging from geophysics and petroleum engineering to medical imaging~\cite{TKS20}, biology, chemotaxis~\cite{FBM25}, and semiconductor physics~\cite{TaJ25}. Various approaches have been proposed for studying inverse problems for PDEs, including deterministic regularization techniques~\cite{EHN96} and Bayesian approaches~\cite{KaS05,Stu10}. In particular, nonparametric Bayesian approaches to nonlinear statistical inverse problems~\cite{Nic23} have been applied with great success to several PDEs arising in applications, such as Darcy's law~\cite{PSV22,KSV26}.

The goal of this paper is to study a Bayesian statistical inverse problem for a coupled parabolic-elliptic PDE system, the Fokker--Planck--Darcy PDE 
for the pair $(u,\varphi)$
\begin{equation} \label{eq:PDE_system_intro}
\begin{aligned}
\partial_t u &= \sigma^2 \Delta u + \nabla \cdot \left( u \nabla \varphi \right), && \text{in } \Omega \times (0,T), \\
- \nabla \cdot \left( \alpha \nabla \varphi \right) &= u, && \text{in } \Omega \times (0,T),
\end{aligned}
\end{equation}
together with appropriate boundary conditions. Our goal is to infer the permittivity function $\alpha \colon \Omega \to \R$ from noisy measurements of the density $u$; we will assume that scalar diffusion coefficient $\sigma >0$ is known. The main novelty of the problem that we consider in this paper, particularly with respect to the standard inverse problem for Darcy's law, is that we do not have access to noisy measurements of the solution to the Darcy problem. Rather, the solution of the elliptic PDE is observed indirectly via observations of the solution to the Fokker--Planck equation. Our objective is to show that, under appropriate assumptions regarding the spatiotemporal measurements of the solution to the Fokker--Planck equation, we can demonstrate the convergence of the posterior distribution to the true permittivity for a broad class of prior distributions.

The PDE model~\eqref{eq:PDE_system_intro}, together with the appropriate boundary conditions, appears in semiconductor modeling and the theory of electrolytes~\cite{MRS90,WMZ08,BEL04,Bil92,BWN94}. In the spatially-homogeneous case, the elliptic PDE becomes the Poisson equation. When the medium is heterogeneous, the appropriate model carries a space-dependent permittivity $\alpha$, leading to the Darcy-type equation $-\nabla \cdot (\alpha \nabla \varphi) = u$. The permittivity encodes the material properties of the medium. We note that the inverse problem we consider in this paper is different from the question of estimating the doping profile from boundary measurements, the problem that is considered in~\cite{TaJ25}.

We will adopt a Bayesian nonparametric approach, placing a rescaled Gaussian process prior on a reparametrization of the permittivity and studying the concentration of the resulting posterior distribution as the number of observations grows. This places our work in the framework developed systematically in~\cite{Nic23} for nonlinear statistical inverse problems, in which posterior contraction rates are obtained by combining a contraction result for the observed quantity with a stability estimate for the inverse map. The estimate that will be crucial for our analysis is based on the recent generalized stability estimate for Darcy's law that was obtained in~\cite[Lemma 2.2]{Wan26}.

The same program that we will carry out in this paper for the Fokker--Planck--Darcy system~\eqref{eq:PDE_system_intro} was carried out for the McKean--Vlasov PDE 
\begin{equation}
\partial_t \rho = \Delta \rho + \nabla \cdot \left( \rho \nabla W \ast \rho \right), \quad \text{in } \Omega \times (0,T), 
\end{equation}
on the torus in~\cite{NPR25}, where $W$ denotes the unknown interaction kernel that has to be inferred from noisy space-time measurements of the solution $\rho$ to the McKean--Vlasov PDE. Optimal posterior convergence rates were obtained in that paper for a class of sufficiently regular interaction kernels and for an appropriate choice of initial conditions. In particular, it was shown that it is necessary to consider sufficiently non-smooth initial conditions and restrict observations over a short time interval $[0,t_0]$, where $t_0$ depends on the smoothness of the initial conditions, in order to recover the interaction potential with optimal, close to the parametric case, rates.

In principle, the problem that we consider in this paper can be addressed by reducing it to the one that was studied in~\cite{NPR25}. In particular, we can solve the elliptic PDE in terms of the Green's function, and then substitute the resulting expression into the Fokker--Planck equation to obtain the McKean--Vlasov PDE. The crucial difference is that the convolution term has to be replaced by an integral of the form $\int_\Omega W_\alpha(x,y) u(t,y) \dd y$, where $W_\alpha$ denotes the (Dirichlet) Green's function of the Darcy operator. We can then aim to recover the permittivity from the Green function based on the type of measurements that are considered in~\cite{NPR25}. However, there are several difficulties with this approach. First, the Green's function for a second order uniformly elliptic operator does not satisfy the regularity assumptions on $W$ that were made in~\cite{NPR25}, due to the singularity along the diagonal. Second, even if we could recover the Green's function, it is not clear how we could infer the permittivity unless we had many measurements along the diagonal. In addition, the periodic boundary conditions considered in~\cite{NPR25}, which considerably simplify the PDE analysis, are not appropriate for the applications motivating the problem we consider in the present work. 

In this paper, we consider the inverse problem for the Fokker--Planck--Darcy PDE system~\eqref{eq:PDE_system_intro} with the physically relevant choice of no-flux boundary conditions for the density and homogeneous Dirichlet conditions for the potential~\cite{Bil92, BWN94}. This choice of boundary conditions renders the problem more physically relevant, but also more difficult from a PDE theory perspective. However, the choice of boundary conditions does have a positive mathematical consequence: it enables us to use the stability estimate from~\cite[Lemma 2.2]{Wan26} to bypass the obstruction to the unique solvability of the inverse problem for Darcy's problem that is highlighted in~\cite{Ric81}. Three issues are particularly relevant. First, the no-flux boundary conditions are nonlinear and nonlocal, and the well-posedness of the forward problem is more delicate than in the case of periodic boundary conditions. By extending the results from~\cite{Bil92,BWN94} to the case of non-constant permittivity, we prove that the forward problem is well-posed over a time interval $[0,T]$. Second, the parabolic regularity theory that is straightforward in the case of periodic boundary conditions is much more subtle. In particular, the compatibility conditions that are needed on the initial conditions (see, e.g.,~\cite[Section 7.1]{Eva98},~\cite[Chapter IV]{Wlo87},\cite{Mil99}) in order to have regularity extending all the way to $t=0$ cannot be imposed, since they involve both unknown $u(\cdot,0)$ and $\varphi(\cdot,0)$. We mention, in particular, that only the initial condition $u_0$ for $u$ can be chosen. In fact, $\varphi(\cdot,0)$ needs to be calculated by solving the Poisson equation $- \nabla \cdot \left( \alpha \nabla \varphi(\cdot,0) \right) = u(\cdot,0)$. Hence, the higher order derivatives for $u(\cdot,0)$ and $\varphi(\cdot,0)$ that are needed for the compatibility conditions cannot be satisfied a priori without further analysis of the Darcy PDE at $t=0$. This is the reason why our regularity estimates are stated on time intervals bounded away from $t=0$, and why the observation window is taken with $t_0 > 0$, in contrast to what is done in~\cite{NPR25}. The corresponding constants indeed degenerate as $t_0 \to 0$, but since the window is fixed independently of the number of observations, the contraction rate is unaffected.

On the other hand, our choice of boundary conditions results in the right-hand side of the elliptic PDE in~\eqref{eq:PDE_system_intro} being strictly positive. As a consequence, we can apply~\cite[Lemma 2.2]{Wan26} and obtain a stability estimate crucial in our analysis. However, it is not clear how to extend this approach to the case of periodic boundary conditions, since, in that case, the elliptic PDE takes the form
\begin{equation}
- \nabla \cdot \left( \alpha \nabla \varphi \right) = u - \frac1{\abs{\Omega}} \int_\Omega u(y,\cdot) \dd y,
\end{equation}
as the right-hand side must have zero mean by Fredholm's alternative. Therefore, the right-hand side necessarily changes sign and the stability estimates in~\cite{Nic23,Wan26} cannot be used, the situation that is highlighted in~\cite{Ric81}. Even though numerical experiments for the problem with periodic boundary conditions suggest that it is still possible to recover $\alpha$, just as in the no-flux/Dirichlet boundary conditions case, it is not clear how to extend the analysis to this case.

\paragraph{Our Contributions.} The main contributions of the work are as follows. 

\begin{itemize}[leftmargin=*]

\item We present, in \cref{sec:analysis_PDE}, a detailed analysis of the forward problem for~\eqref{eq:PDE_system_intro} with no-flux and Dirichlet boundary conditions for the Fokker--Planck and Darcy PDEs, respectively. Even though the analysis follows earlier work in, e.g., \cite{Bil92,BWN94}, care must be taken so that this analysis can be adapted to the setting of variable permittivity. In particular, we need to obtain results that hold uniformly over an appropriate space of permittivities, for example when obtaining upper and lower bounds on the density.

\item We present, in \cref{sec:analysis_IP}, a detailed analysis of the inverse problem for the drift-diffusion-Darcy system. We prove a Lipschitz-type forward stability estimate, as well as a backward stability estimate. The generalized stability estimate that we obtain is of the form required by the framework of \cite{Nic23,Wan26}. 

\item Combining these ingredients yields our main result, \cref{thm:contraction_parameter}, which establishes that the posterior distribution contracts around the true permittivity at an explicit, computable rate that depends on a regularity parameter of the solution to the forward problem, and the posterior mean estimator attains the same rate.

\item We present, in \cref{sec:numerics}, numerical experiments in one and two space dimensions that illustrate the theory and compare posterior sampling with an ensemble Kalman approach.

\end{itemize}

\paragraph{Outline.} The remainder of the paper is organized as follows. In \cref{sec:setting}, we present the mathematical model and formulate the corresponding inverse problem. In \cref{sec:analysis_PDE}, we investigate the analytical properties of the system and, after establishing existence and uniqueness of solutions, we derive boundedness and regularity estimates. In \cref{sec:analysis_IP}, we focus on the inverse problem, proving forward and backward stability estimates as well as posterior contraction results. Numerical experiments in dimensions one and two are presented in \cref{sec:numerics}. Finally, in \cref{sec:conclusion}, we summarize the main contributions of the paper and discuss possible directions for future research.

\section{Problem setting} \label{sec:setting}

\subsection{The forward problem} \label{ssec:FP}

Let $\Omega \subset \R^d$, with $d \in \{1,2,3\}$, be a bounded domain with $\cC^\infty$ boundary $\partial\Omega$, and let $T$ be a final time. Consider the following initial-boundary value problem for the pair $(u,\varphi)$
\begin{equation} \label{eq:PDE_system}
\begin{aligned}
\partial_t u &= \sigma^2 \Delta u + \nabla \cdot \left( u \nabla \varphi \right), && \text{in } \Omega \times (0,T), \\
- \nabla \cdot \left( \alpha \nabla \varphi \right) &= u, && \text{in } \Omega \times (0,T),
\end{aligned}
\end{equation}
where $\sigma > 0$ is a constant diffusion coefficient of the Fokker--Planck equation and $\alpha$ is the permittivity function of the Darcy equation. We impose no-flux and homogeneous Dirichlet boundary conditions
\begin{equation} \label{eq:PDE_system_BC}
\begin{aligned}
\sigma^2 \frac{\partial u}{\partial\nu} + u \frac{\partial\varphi}{\partial\nu} &= 0, && \text{on } \partial\Omega \times (0,T), \\
\varphi &= 0, && \text{on } \partial\Omega \times (0,T), \\
\end{aligned}
\end{equation}
where $\nu$ denotes the outward unit normal to $\partial\Omega$, and the initial condition
\begin{equation} \label{eq:PDE_system_IC}
u(\cdot, 0) = u_0, \quad\text{in } \Omega.
\end{equation}
We work under the following conditions for the permittivity $\alpha$ and the initial condition $u_0$.

\begin{assumption} \label{as:alpha_u0}
The functions $\alpha$ and $u_0$ belong to $\cC^\infty(\bar\Omega)$. Moreover, there exist positive constants $a,A,b,B > 0$ such that
\begin{equation}
\begin{aligned}
a \le \alpha(x) \le A &\quad\text{for all } x \in \bar\Omega, \\
b \le u_0(x) \le B &\quad\text{for all } x \in \bar\Omega,
\end{aligned}
\end{equation}
and
\begin{equation}
\norm{u_0}_{L^1(\Omega)} = 1.
\end{equation}
\end{assumption}

In the present setting, the system \eqref{eq:PDE_system} admits a unique solution, a fact that we establish in \cref{sec:analysis_PDE}. Building on this well-posedness result, we turn to the inverse problem of recovering the permittivity $\alpha$ from discrete observations of the solution $u$ to the Fokker–Planck equation. This problem is defined in the next subsection and is the subject of \cref{sec:analysis_IP}. When needed for clarity, we denote the solution $(u,\varphi)$ of \eqref{eq:PDE_system} by $(u(\cdot,\cdot;\alpha),\varphi(\cdot,\cdot;\alpha))$, making explicit the dependence on the permittivity $\alpha$.

\subsection{The inverse problem} \label{ssec:IP}

Let $\alpha_0$ denote the unknown true permittivity parameter of interest, and consider $N$ space-time measurements
\begin{equation} \label{eq:observations}
Y_n = u(x_n, t_n; \alpha_0) + \eta_n, \qquad n = 1, \dots, N,
\end{equation}
where the $\eta_n$ are independent and identically distributed Gaussian random variables with distribution $\cN(0,\gamma^2)$ and known noise level $\gamma$. The design points $(x_n,t_n) \in \Omega \times [0,T]$ are drawn independently from the uniform distribution on the space-time cylinder $\Omega \times [t_0,t_1] \subset \Omega \times [0,T]$ with $0 < t_0 < t_1 \le T$, and are independent of the noise. Note that the observation window $[t_0,t_1]$ can be arbitrarily small, provided it has positive length. We denote the data set by
\begin{equation}
\cD_N = \left\{ \left( x_n, t_n, Y_n \right) \right\}_{n=1}^N,
\end{equation}
and its law by $P_N$. Our goal is to recover $\alpha$ from $\cD_N$ within the admissible set
\begin{equation}
\cA_M^\ell \coloneq \left\{ \alpha \in W^{\ell,\infty}(\Omega) \colon \alpha \ge a, \; \norm{\alpha}_{W^{\ell,\infty}(\Omega)} \le M \right\},
\end{equation}
where $\ell \in \N$, with $\ell \ge 4$, is a regularity parameter, and $M > 0$ is a given constant. We will apply a Bayesian approach in which we seek a probability distribution on $\alpha$ -- see \cref{sec:analysis_IP}. The lower bound and regularity will hold almost surely under the prior and hence the posterior; in contrast, the bound $M$ will depend
on the individual realization, under the prior or the posterior. This causes no difficulty: all the results of \cref{sec:analysis_PDE,ssec:SE} are deterministic and hold for every $\alpha \in \cA_M^\ell$. The role of $M$ in the Bayesian formulation is discussed after \eqref{eq:bayesian_model} and in \cref{rem:ThetaMN}.

\begin{remark} \label{rem:condition_ell}
The requirement that $\ell \ge 4$ arises as follows. The backward stability estimate of \cref{pro:backward_stability_estimate} requires the solution $u$ of \eqref{eq:PDE_system} to satisfy the regularity bound of \cref{as:regularity} for some $\kappa > \max\{2,\, 1+d/2\}$. Here, the condition $\kappa > 2$ makes available the interpolation in time used in the proof of \cref{cor:backward_stability_estimate}, while the condition $\kappa > 1 + d/2$ guaranties the Sobolev embedding $H^\kappa(\Omega) \hookrightarrow W^{1,\infty}(\Omega)$, which is needed in order to apply the elliptic estimates of \cite[Lemma 2.2]{Wan26}. Elliptic regularity for the Darcy equation then requires the permittivity to satisfy $\alpha \in W^{\kappa+1,\infty}(\Omega)$, whence $\ell \ge \kappa+1$. Since $d \le 3$ gives $\max\{2,\,1+d/2\} \le 5/2$, an admissible exponent $\kappa$ exists precisely when $\ell \ge 4$.
\end{remark}

A distinctive feature of this inverse problem is that the parameter is inferred from indirect observations. Indeed, the available data consist of measurements of the density $u$, while the unknown coefficient $\alpha$ enters the model only through the elliptic equation for the potential $\varphi$. Since $u$ depends on $\varphi$ via the coupled system \eqref{eq:PDE_system}, the observations provide only indirect information on $\alpha$.

\section{Properties of the coupled system} \label{sec:analysis_PDE}

In this section, we study the existence and uniqueness of solutions to the coupled system 
defined in \cref{ssec:FP}, and lay the foundation for the statistical analysis of the inverse problem. In \cref{ssec:EU} we study existence and uniqueness, and in
\cref{ssec:BR} we establish a priori bounds and regularity estimates.
All of the following results hold uniformly over $\alpha \in \cA_M^\ell$. In particular, the constants appearing below are independent of $\alpha$ and depend only on $a,b,B,M,\sigma,T,\Omega$, and $u_0$.

\subsection{Existence and uniqueness of solutions} \label{ssec:EU}

The proof of existence and uniqueness follows the arguments in \cite{Bil92}, which we recall here for completeness. The main difference lies in the presence of the permittivity $\alpha$ in the Darcy equation, whereas in \cite{Bil92} the elliptic operator is the Laplacian, corresponding to a Poisson equation without permittivity. Specifically, we employ a fixed point argument in the space $\cX = L^4(0,T; L^2(\Omega))$. We define the operator $\cS \colon \cX \to \cX$ that maps $v \in \cX$ to the function $u \in \cX$ obtained as the solution of the Fokker--Planck equation
\begin{equation} \label{eq:parabolic_PDE_linear}
\begin{aligned}
\partial_t u &= \sigma^2 \Delta u + \nabla \cdot \left( u \nabla \psi \right), && \text{in } \Omega \times (0,T), \\
\sigma^2 \frac{\partial u}{\partial\nu} + u \frac{\partial\psi}{\partial\nu} &= 0, && \text{on } \partial\Omega \times (0,T), \\
u(\cdot, 0) &= u_0, && \text{in } \Omega,
\end{aligned}
\end{equation}
where $\psi$ is the solution of the elliptic problem
\begin{equation} \label{eq:elliptic_PDE_linear}
\begin{aligned}
- \nabla \cdot \left( \alpha \nabla \psi \right) &= v, && \text{in } \Omega \times (0,T), \\
\psi &= 0, && \text{on } \partial\Omega \times (0,T).
\end{aligned}
\end{equation}
In this formulation, the system is decoupled in the sense that the Fokker--Planck equation depends on $\psi$, which is determined by $v$ through the elliptic problem, but not conversely. However a fixed point of $\cS$ delivers a solution of the forward problem defined in \cref{ssec:FP}.

The following result shows that the operator $\cS$ is well-defined, and that, for a suitable radius $\mathsf R$, there exists a ball $\cB_{\mathsf R} \subset \cX$ which is invariant under $\cS$.

\begin{lemma} \label{lem:S_defined_invariant}
Under \cref{as:alpha_u0}, given $v \in \cX$, there exists a unique weak solution $u \in \cX$, defining the operator $\cS \colon \cX \to \cX$ by $\cS(v) = u$. Moreover, there exists $T > 0$ sufficiently small, such that the ball $\cB_{\mathsf R} \subset \cX$ with $\mathsf R = 2 \norm{u_0}_{L^2(\Omega)}$ is invariant under $\cS$, i.e., $\cS(\cB_{\mathsf R}) \subseteq \cB_{\mathsf R}$.
\end{lemma}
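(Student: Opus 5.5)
Given $v\in\cX$, I first solve the elliptic problem \eqref{eq:elliptic_PDE_linear} for a.e.\ $t$, and then treat \eqref{eq:parabolic_PDE_linear} as a linear parabolic problem with the drift $\nabla\psi$ frozen.

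\emph{Elliptic step.} Since $a\le\alpha\le A$, Lax--Milgram gives a unique $\psi(\cdot,t)\in H^1_0(\Omega)$. Since $\alpha\in\cC^\infty(\bar\Omega)$ and $\partial\Omega$ is smooth, $H^2$ elliptic regularity gives
\[
\norm{\psi(\cdot,t)}_{H^2(\Omega)}\le C_\alpha\norm{v(\cdot,t)}_{L^2(\Omega)}.
\]
The constant depends only on $a$, on $\norm{\alpha}_{W^{1,\infty}}$ and on $\Omega$. For $d\le3$, the Sobolev embedding $H^2\hookrightarrow W^{1,6}$ (and $W^{1,q}$ for all $q<\infty$ when $d\le 2$) then yields $\norm{\nabla\psi(\cdot,t)}_{L^6}\le C\norm{v(\cdot,t)}_{L^2}$. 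Hence $\nabla\psi\in L^4(0,T;L^6(\Omega))$, and likewise $\nabla\psi|_{\partial\Omega}\in L^4(0,T;L^4(\partial\Omega))$ by the trace theorem.

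\emph{Parabolic step.} I use the weak formulation in which the no-flux condition is natural:
\[
\langle\partial_t u,w\rangle+\int_\Omega(\sigma^2\nabla u+u\nabla\psi)\cdot\nabla w=0,\qquad w\in H^1(\Omega),
\]
so that the boundary term vanishes identically. I would build Galerkin approximations on Neumann eigenfunctions and derive the energy estimate by testing with $u$:
\[
\tfrac12\tfrac{d}{dt}\norm{u}_{L^2}^2+\sigma^2\norm{\nabla u}_{L^2}^2=-\int_\Omega u\nabla\psi\cdot\nabla u .
\]
By H\"older with exponents $(3,6,2)$, the Gagliardo--Nirenberg inequality $\norm{u}_{L^3}\le C\norm{u}_{L^2}^{1/2}\norm{u}_{H^1}^{1/2}$ (valid for $d\le3$), and Young's inequality, the right-hand side is bounded by
\[
\tfrac{\sigma^2}{2}\norm{\nabla u}^2_{L^2}+C\bigl(\norm{\nabla\psi}_{L^6}^4+1\bigr)\norm{u}_{L^2}^2 .
\]
Gronwall's inequality then gives
\[
\sup_{t\le T}\norm{u(t)}_{L^2}^2+\sigma^2\!\int_0^T\!\norm{\nabla u}^2_{L^2}\le \norm{u_0}^2_{L^2}\exp\Bigl(C\!\int_0^T\!(\norm{v}^4_{L^2}+1)\Bigr).
\]
This bound is uniform in the Galerkin index, so a subsequence converges to a weak solution in $L^2(0,T;H^1)\cap H^1(0,T;(H^1)')$. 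Uniqueness follows from the same estimate applied to the difference of two solutions, which solves the linear problem with zero initial data. In particular $u\in L^\infty(0,T;L^2)\subset\cX$, so $\cS$ is well defined.

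\emph{Invariance.} If $\norm{v}_\cX\le\mathsf R$, the Gronwall bound gives $\sup_t\norm{u(t)}_{L^2}\le\norm{u_0}_{L^2}\exp\bigl(C(\mathsf R^4+T)/2\bigr)$. Hence
\[
\norm{u}_\cX\le T^{1/4}\sup_t\norm{u(t)}_{L^2}\le T^{1/4}\norm{u_0}_{L^2}e^{C(\mathsf R^4+T)/2},
\]
and this is at most $2\norm{u_0}_{L^2}=\mathsf R$ once $T$ is small. Here $\mathsf R$ is fixed in terms of $u_0$, so the exponential factor is a fixed constant, and the prefactor $T^{1/4}$ drives the bound below $\mathsf R$.

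\emph{Main obstacle.} The delicate step is controlling the drift term $\int u\nabla\psi\cdot\nabla u$ when $\nabla\psi$ is only as integrable as $H^2$ allows and $v$ is merely in $L^4$ in time. The exponent $4$ in $\cX$ is exactly what makes $\int_0^T\norm{\nabla\psi}_{L^6}^4$ finite in $d=3$. I would also check that the elliptic constant is uniform over $\alpha\in\cA_M^\ell$, which holds since it depends only on $a$, $M$ and $\Omega$.
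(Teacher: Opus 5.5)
Your proposal is correct and follows essentially the same route as the paper. Both arguments use the $H^2$ elliptic bound to get $\nabla\psi\in L^4(0,T;L^6(\Omega))$, then the energy estimate via H\"older, Gagliardo--Nirenberg and Young, then Gr\"onwall, and finally the prefactor $T^{1/4}$ to close the invariance of $\cB_{\mathsf R}$. The only difference is in the linear parabolic step: you build the solution by Galerkin in the divergence-form weak formulation, where the no-flux condition is natural (so the boundary-trace integrability of $\partial\psi/\partial\nu$ that the paper verifies is not needed), and you prove uniqueness directly, whereas the paper cites Ladyzhenskaya--Solonnikov--Ural'tseva for existence and uniqueness.
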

\begin{proof}
Let $\widetilde C > 0$ denote a generic constant that may change from line to line. First, a unique solution $\psi \in L^4(0,T;H^2(\Omega))$ of equation \eqref{eq:elliptic_PDE_linear} exists by standard elliptic theory. Moreover, by Sobolev embedding in dimension $d \le 3$ and elliptic regularity, it holds
\begin{equation} \label{eq:bound_nabla_psi_L6}
\norm{\nabla\psi(\cdot,t)}_{L^6(\Omega)} \le \widetilde C\norm{\psi(\cdot,t)}_{H^2(\Omega)} \le \widetilde C \norm{v(\cdot,t)}_{L^2(\Omega)},
\end{equation}
which implies that $\nabla\psi \in L^4(0,T;L^6(\Omega))$. To show the existence of a solution $u$ to the linear parabolic problem \eqref{eq:parabolic_PDE_linear}, we verify the following conditions from \cite[Chapter III]{LSU68} for some $r,q,r',q'$
\begin{equation}
\begin{aligned}
(i) \quad& \abs{\nabla\psi}^2 \in L^r(0,T;L^q(\Omega)) \quad \text{with} \quad \frac1r + \frac{d}{2q} \le 1, \\
(ii) \quad& \frac{\partial\psi}{\partial\nu} \in L^{r'}(0,T;L^{q'}(\partial\Omega)) \quad \text{with} \quad \frac1{r'} + \frac{d-1}{2q'} \le \frac12.
\end{aligned}
\end{equation}
In particular, $(i)$ holds true by taking $r = 2$ and $q = 3$ if $d \le 3$, since
\begin{equation}
\norm{\abs{\nabla\psi}^2}_{L^2(0,T;L^3(\Omega))} = \norm{\nabla\psi}_{L^4(0,T;L^6(\Omega))}^2.
\end{equation}
For $(ii)$, since $\psi(\cdot,t) \in H^2(\Omega)$ and consequently $\nabla\psi(\cdot,t) \in H^1(\Omega)$, we choose $r' = 4$ and $q' = 4$ and use the Sobolev embedding theorem for traces (see, e.g., \cite[Theorem 5.22]{AdF03}) $H^1(\Omega) \hookrightarrow L^4(\partial\Omega)$. Then, we derive a priori bounds for the solution $u$. Testing equation \eqref{eq:parabolic_PDE_linear} with $u$, we have
\begin{equation}
\frac12 \frac{\d}{\d t} \norm{u(\cdot,t)}_{L^2(\Omega)}^2 + \sigma^2 \norm{\nabla u(\cdot,t)}_{L^2(\Omega)}^2 = \int_\Omega u(x,t) \nabla u(x,t) \cdot \nabla \psi(x,t) \dd x \eqcolon Q(t),
\end{equation}
and, using Hölder and Gagliardo--Nirenberg inequalities and bound \eqref{eq:bound_nabla_psi_L6}, we obtain
\begin{equation}
\begin{aligned}
\abs{Q(t)} &\le \widetilde C \norm{u(\cdot,t)}_{L^3(\Omega)} \norm{\nabla u(\cdot,t)}_{L^2(\Omega)} \norm{\nabla\psi(\cdot,t)}_{L^6(\Omega)} \\
&\le \widetilde C \norm{u(\cdot,t)}_{H^1(\Omega)}^{1/2} \norm{u(\cdot,t)}_{L^2(\Omega)}^{1/2} \norm{\nabla u(\cdot,t)}_{L^2(\Omega)} \norm{v(\cdot,t)}_{L^2(\Omega)}.
\end{aligned}
\end{equation}
Applying Young's inequality twice gives
\begin{equation}
\abs{Q(t)} \le \frac{\sigma^2}2 \norm{\nabla u(\cdot,t)}_{L^2(\Omega)}^2 + \widetilde C \left( 1 + \norm{v(\cdot,t)}_{L^2(\Omega)}^4 \right) \norm{u(\cdot,t)}_{L^2(\Omega)}^2,
\end{equation}
which implies
\begin{equation} \label{eq:bound_before_Gronwall}
\frac{\d}{\d t} \norm{u(\cdot,t)}_{L^2(\Omega)}^2 + \sigma^2 \norm{\nabla u(\cdot,t)}_{L^2(\Omega)}^2 \le \widetilde C \left( 1 + \norm{v(\cdot,t)}_{L^2(\Omega)}^4 \right) \norm{u(\cdot,t)}_{L^2(\Omega)}^2.
\end{equation}
Using Grönwall's lemma, then it follows
\begin{equation}
\norm{u(\cdot,t)}_{L^2(\Omega)}^2 \le \norm{u_0}_{L^2(\Omega)}^2 \exp \left( \widetilde C \left( T + \norm{v}_{L^4(0,T;L^2(\Omega))}^4 \right) \right) < \infty,
\end{equation}
which implies $u \in L^\infty(0,T;L^2(\Omega))$ and
\begin{equation} \label{eq:bound_invariant_ball}
\norm{u(\cdot,t)}_{L^4(0,T;L^2(\Omega))} \le T^{1/4} \norm{u_0}_{L^2(\Omega)} \exp \left( \widetilde C \left( T + \norm{v}_{L^4(0,T;L^2(\Omega))}^4 \right) \right).
\end{equation}
Integrating equation \eqref{eq:bound_before_Gronwall}, we also deduce that $u \in L^2(0,T;H^1(\Omega))$. Therefore, by the parabolic theory in \cite[Chapter III]{LSU68}, there exists a unique solution $u \in L^\infty(0,T;L^2(\Omega)) \cap L^2(0,T;H^1(\Omega))$, which implies that $u \in \cX$. Moreover, by equation \eqref{eq:bound_invariant_ball}, for $T$ sufficiently small, the right-hand side can be bounded by $2\norm{u_0}_{L^2(\Omega)}$, yielding the invariance of the ball $\cB_{\mathsf R}$ under $\cS$ and concluding the proof.
\end{proof}

In the next two results, we prove that $\cS$ is continuous and compact, respectively.

\begin{lemma} \label{lem:S_continuous}
Under \cref{as:alpha_u0}, the operator $\cS \colon \cX \to \cX$ is continuous.
\end{lemma}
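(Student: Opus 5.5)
To prove continuity of $\cS$, I will take a sequence $v_k \to v$ in $\cX = L^4(0,T;L^2(\Omega))$. Set $u_k = \cS(v_k)$ and $u = \cS(v)$, and let $\psi_k,\psi$ be the corresponding solutions of the elliptic problem \eqref{eq:elliptic_PDE_linear}. The aim is an energy estimate for the difference $w_k = u_k - u$, closed by Grönwall's lemma, in the same spirit as the proof of \cref{lem:S_defined_invariant}.

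First, by linearity of \eqref{eq:elliptic_PDE_linear} and the bound \eqref{eq:bound_nabla_psi_L6} applied to $\psi_k - \psi$,
\begin{equation}
\norm{\nabla(\psi_k - \psi)(\cdot,t)}_{L^6(\Omega)} \le \widetilde C \norm{(v_k - v)(\cdot,t)}_{L^2(\Omega)},
\end{equation}
so $\nabla\psi_k \to \nabla\psi$ in $L^4(0,T;L^6(\Omega))$. Next, $w_k$ satisfies
\begin{equation}
\partial_t w_k = \sigma^2\Delta w_k + \nabla\cdot(w_k\nabla\psi_k) + \nabla\cdot(u\nabla(\psi_k-\psi)),
\end{equation}
with $w_k(\cdot,0)=0$. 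The no-flux conditions for $u_k$ and $u$ combine into a homogeneous natural boundary condition for the total flux $\sigma^2\nabla w_k + w_k\nabla\psi_k + u\nabla(\psi_k-\psi)$. Hence, testing weakly with $w_k$ produces no boundary terms:
\begin{equation}
\frac12\frac{\d}{\d t}\norm{w_k}_{L^2}^2 + \sigma^2\norm{\nabla w_k}_{L^2}^2 = -\int_\Omega w_k\nabla w_k\cdot\nabla\psi_k \dd x - \int_\Omega u\,\nabla w_k\cdot\nabla(\psi_k-\psi)\dd x .
\end{equation}
The first term on the right is handled exactly as $Q(t)$ in \cref{lem:S_defined_invariant}. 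It is bounded by $\frac{\sigma^2}{4}\norm{\nabla w_k}_{L^2}^2 + \widetilde C(1+\norm{v_k}_{L^2}^4)\norm{w_k}_{L^2}^2$. For the second term I will use Hölder with exponents $(3,2,6)$, Gagliardo--Nirenberg $\norm{u}_{L^3}\le \widetilde C\norm{u}_{H^1}^{1/2}\norm{u}_{L^2}^{1/2}$, and Young. This gives the bound $\frac{\sigma^2}{4}\norm{\nabla w_k}_{L^2}^2 + \widetilde C\norm{u}_{H^1}\norm{u}_{L^2}\norm{v_k-v}_{L^2}^2$.

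Applying Grönwall then yields
\begin{equation}
\sup_t\norm{w_k(\cdot,t)}_{L^2}^2 \le \widetilde C \exp\!\Big(\widetilde C\big(T+\norm{v_k}_{\cX}^4\big)\Big)\,\norm{u}_{L^\infty(0,T;L^2)}\int_0^T \norm{u}_{H^1}\norm{v_k-v}_{L^2}^2\dd t .
\end{equation}
The exponential factor is uniformly bounded, since $(v_k)$ is bounded in $\cX$. By Cauchy--Schwarz in time, the integral is at most $\norm{u}_{L^2(0,T;H^1)}\norm{v_k-v}_{\cX}^2$, which tends to zero because \cref{lem:S_defined_invariant} provides $u\in L^\infty(0,T;L^2)\cap L^2(0,T;H^1)$. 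Therefore $w_k\to 0$ in $L^\infty(0,T;L^2(\Omega))$, and hence in $\cX$.

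The main obstacle is justifying the energy identity at the regularity level of weak solutions from \cite{LSU68}: one must check that $w_k$ is an admissible test function and that the nonlinear no-flux condition produces no boundary contributions. I would handle this by working with the weak formulation directly, where the boundary condition is built in, and by using $\partial_t w_k \in L^2(0,T;(H^1)')$ (obtainable from the same bounds) to justify $\frac{\d}{\d t}\norm{w_k}^2 = 2\langle\partial_t w_k,w_k\rangle$.
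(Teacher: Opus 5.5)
Your proposal is correct and follows essentially the same route as the paper. Both subtract the two equations and test with the difference. Both split the right-hand side into the transport term (handled exactly as $Q(t)$) and the source term $\nabla\cdot(u\nabla(\psi_k-\psi))$, bounded via Hölder, Gagliardo--Nirenberg and $u\in L^\infty(0,T;L^2(\Omega))\cap L^2(0,T;H^1(\Omega))$. Both then close with Grönwall and Cauchy--Schwarz in time.

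Your sequential bookkeeping is slightly cleaner than the paper's. The exponential factor depends on $\norm{v_k}_{\cX}$, which is bounded along the sequence, where the paper's display writes $\norm{v}$. Your remark on justifying the energy identity through $\partial_t w_k\in L^2(0,T;(H^1(\Omega))')$ supplies a step the paper leaves implicit.
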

\begin{proof}
Let $\widetilde C > 0$ denote a generic constant that may change from line to line. Let $v_1,v_2 \in \cX$, $\psi_1,\psi_2$ be the corresponding solutions of \eqref{eq:elliptic_PDE_linear}, and consider $u_1 = \cS(v_1)$ and $u_2 = \cS(v_2)$. From the parabolic problem \eqref{eq:parabolic_PDE_linear}, we deduce
\begin{equation}
\partial_t (u_1 - u_2) = \sigma^2 (\Delta u_1 - \Delta u_2) + \nabla \cdot ((u_1 - u_2) \nabla\psi_1) + \nabla \cdot (u_2(\nabla\psi_1 - \nabla\psi_2)).
\end{equation}
Testing with $u_1 - u_2$, proceeding as in the proof of \cref{lem:S_defined_invariant}, and using that $u_2 \in L^\infty(0,T;L^2(\Omega))$, we obtain
\begin{equation}
\begin{aligned}
\frac{\d}{\d t} \norm{u_1(\cdot,t) - u_2(\cdot,t)}_{L^2(\Omega)}^2 &\le \widetilde C (1 + \norm{v_1(\cdot,t)}_{L^2(\Omega)}^4) \norm{u_1(\cdot,t) - u_2(\cdot,t)}_{L^2(\Omega)}^2 \\
&\quad + \widetilde C \norm{u_2(\cdot,t)}_{H^1(\Omega)} \norm{v_1(\cdot,t) - v_2(\cdot,t)}_{L^2(\Omega)}^2.
\end{aligned}
\end{equation}
Grönwall's lemma with the fact that $u_1(\cdot,0) = u_2(\cdot,0) = u_0$ and Hölder's inequality then give
\begin{equation}
\begin{aligned}
&\norm{u_1(\cdot,t) - u_2(\cdot,t)}_{L^2(\Omega)}^2 \\
&\qquad\le \widetilde C \int_0^t \exp \left( \widetilde C \int_s^t (1 + \norm{v_1(\cdot,r)}_{L^2(\Omega)}^4) \dd r \right) \norm{u_2(\cdot,s)}_{H^1(\Omega)} \norm{v_1(\cdot,s) - v_2(\cdot,s)}_{L^2(\Omega)}^2 \dd s \\
&\qquad\le \widetilde C \exp \left( \widetilde C \left( T + \norm{v}_{L^4(0,T;L^2(\Omega))}^4 \right) \right) \norm{u_2}_{L^2(0,T;H^1(\Omega))} \norm{v_1 - v_2}_{L^4(0,T;L^2(\Omega))}^2,
\end{aligned}
\end{equation}
which implies
\begin{equation}
\begin{aligned}
&\norm{u_1 - u_2}_{L^4(0,T;L^2(\Omega))} \\
&\qquad\le T^{1/4} \widetilde C \exp \left( \widetilde C \left( T + \norm{v}_{L^4(0,T;L^2(\Omega))}^4 \right) \right) \norm{u_2}_{L^2(0,T;H^1(\Omega))}^{1/2} \norm{v_1 - v_2}_{L^4(0,T;L^2(\Omega))},
\end{aligned}
\end{equation}
which shows that $\cS$ is continuous from $\cX$ to $\cX$ and completes the proof.
\end{proof}

\begin{lemma} \label{lem:S_compact}
Under \cref{as:alpha_u0}, the operator $\cS \colon \cX \to \cX$ is compact.
\end{lemma}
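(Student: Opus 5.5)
To prove that $\cS$ is compact, I will show that it maps bounded sets of $\cX = L^4(0,T;L^2(\Omega))$ into relatively compact sets. The tool is the Aubin--Lions--Simon lemma applied to the triple $H^1(\Omega) \hookrightarrow\hookrightarrow L^2(\Omega) \hookrightarrow (H^1(\Omega))^*$. The compactness comes from the parabolic smoothing already quantified in the proof of \cref{lem:S_defined_invariant}.

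Fix a bounded set $\cB \subset \cX$ with $\norm{v}_{L^4(0,T;L^2(\Omega))} \le K$ for all $v \in \cB$. The Grönwall argument leading to \eqref{eq:bound_invariant_ball} gives
\begin{equation}
\sup_{t\in[0,T]}\norm{u(\cdot,t)}_{L^2(\Omega)}^2 \le \norm{u_0}_{L^2(\Omega)}^2 \exp\left(\widetilde C (T+K^4)\right).
\end{equation}
Integrating \eqref{eq:bound_before_Gronwall} in time then bounds $\norm{u}_{L^2(0,T;H^1(\Omega))}$ uniformly over $v\in\cB$. So $\cS(\cB)$ is bounded in $L^\infty(0,T;L^2(\Omega)) \cap L^2(0,T;H^1(\Omega))$.

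Next I bound the time derivative in $L^{4/3}(0,T;(H^1(\Omega))^*)$, though any exponent strictly greater than $1$ would suffice. I use the weak formulation, where the no-flux condition removes all boundary terms. For $w\in H^1(\Omega)$,
\begin{equation}
\langle \partial_t u, w\rangle = -\int_\Omega \left(\sigma^2 \nabla u + u\nabla\psi\right)\cdot \nabla w \dd x .
\end{equation}
The diffusive term is controlled by $\norm{\nabla u}_{L^2(\Omega)} \norm{w}_{H^1(\Omega)}$, which lies in $L^2(0,T)$. For the drift term, Hölder's inequality, Gagliardo--Nirenberg and \eqref{eq:bound_nabla_psi_L6} give
\begin{equation}
\norm{u\nabla\psi}_{L^2(\Omega)} \le \norm{u}_{L^3(\Omega)} \norm{\nabla\psi}_{L^6(\Omega)} \le \widetilde C \norm{u}_{H^1(\Omega)}^{1/2}\norm{u}_{L^2(\Omega)}^{1/2}\norm{v}_{L^2(\Omega)} .
\end{equation}
The factor $\norm{u}_{L^2(\Omega)}$ is bounded uniformly in time. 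The factor $\norm{u}_{H^1(\Omega)}^{1/2}$ lies in $L^4(0,T)$, and $\norm{v}_{L^2(\Omega)}$ lies in $L^4(0,T)$. By Hölder in time, their product lies in $L^2(0,T)$. Hence $\partial_t u$ is bounded in $L^2(0,T;(H^1(\Omega))^*)$ uniformly over $\cB$.

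This gives relative compactness in $L^2(0,T;L^2(\Omega))$, but the target space is $L^4(0,T;L^2(\Omega))$, and closing this gap is the main obstacle. Simon's version of Aubin--Lions handles it: a set bounded in $L^\infty(0,T;L^2(\Omega))$ and relatively compact in $L^2(0,T;L^2(\Omega))$ is relatively compact in $L^p(0,T;L^2(\Omega))$ for every $p<\infty$. The elementary argument is an interpolation estimate,
\begin{equation}
\norm{f}_{L^4(0,T;L^2(\Omega))} \le \norm{f}_{L^\infty(0,T;L^2(\Omega))}^{1/2}\norm{f}_{L^2(0,T;L^2(\Omega))}^{1/2},
\end{equation}
applied to differences of elements of $\cS(\cB)$. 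Any sequence in $\cS(\cB)$ therefore has a subsequence that is Cauchy in $L^2(0,T;L^2(\Omega))$, and hence Cauchy in $\cX$. Together with the continuity from \cref{lem:S_continuous}, this shows $\cS$ is compact.
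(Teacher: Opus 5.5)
Your proof is correct and follows the same route as the paper: uniform bounds in $L^\infty(0,T;L^2(\Omega))\cap L^2(0,T;H^1(\Omega))$, a bound on $\partial_t u$ in $L^2$ in time with values in the dual of $H^1(\Omega)$ from the weak form, Aubin--Lions for precompactness in $L^2(0,T;L^2(\Omega))$, and then an upgrade to $\cX$. Your upgrade step is cleaner than the paper's: you use the uniform $L^\infty(0,T;L^2(\Omega))$ bound together with $\norm{f}_{L^4(0,T;L^2(\Omega))} \le \norm{f}_{L^\infty(0,T;L^2(\Omega))}^{1/2}\norm{f}_{L^2(0,T;L^2(\Omega))}^{1/2}$, whereas the paper appeals to an $L^\infty(0,T;H^1(\Omega))$ bound that it never establishes.
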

\begin{proof}
Let $\widetilde C > 0$ denote a generic constant that may change from line to line. We prove that for any ball $\cB_r \subset \cX$, the image $\cS(\cB_r)$ is precompact in $\cX$. From the proof of \cref{lem:S_defined_invariant}, we know that $u \in L^2(0,T;H^1(\Omega))$. We now show that $\partial_t u = \nabla \cdot (\sigma^2 \nabla u + u \nabla\psi) \in L^2(0,T;H^{-1}(\Omega))$. In fact, for $\phi \in H^1(\Omega)$, proceeding as in the proof of \cref{lem:S_defined_invariant}, we have
\begin{equation}
\begin{aligned}
\abs{\int_\Omega \partial_t u(x,t) \phi(x) \dd x} &\le \sigma^2 \abs{\int_\Omega \nabla u(x,t) \cdot \nabla\phi(x) \dd x} + \abs{\int_\Omega u \nabla\psi(x,t) \cdot \nabla\phi(x) \dd x} \\
&\le \widetilde C \left( \norm{u(\cdot,t)}_{H^1(\Omega)} + \norm{v(\cdot,t)}_{L^2(\Omega)}^2 \right) \norm{\phi}_{H^1(\Omega)}.
\end{aligned}
\end{equation}
Therefore, we obtain
\begin{equation}
\norm{\partial_t u(\cdot,t)}_{H^{-1}(\Omega)} \le \widetilde C \left( \norm{u(\cdot,t)}_{H^1(\Omega)} + \norm{v(\cdot,t)}_{L^2(\Omega)}^2 \right),
\end{equation}
which implies
\begin{equation}
\norm{\partial_t u}_{L^2(0,T;H^{-1}(\Omega))} \le \widetilde C \left( \norm{u}_{L^2(0,T;H^1(\Omega))} + \norm{v}_{L^4(0,T;L^2(\Omega))}^2 \right).
\end{equation}
By the Aubin--Lions compactness lemma, $\cS(\cB_r)$ is precompact in $L^2(0,T;L^2(\Omega))$. Since in addition $u \in L^\infty(0,T;H^1(\Omega))$, this implies precompactness in $\cX$ as well, which gives the desired result.
\end{proof}

The previous lemmas on the operator $\cS$ allow us to conclude that system \eqref{eq:PDE_system} admits a unique solution, as stated in the following theorem.

\begin{theorem} \label{thm:existence_uniqueness}
Under \cref{as:alpha_u0}, there exists $T > 0$ such that the system \eqref{eq:PDE_system} with boundary conditions \eqref{eq:PDE_system_BC} and initial condition \eqref{eq:PDE_system_IC} has a unique solution $(u,\varphi)$ with
\begin{equation} \label{eq:spaces_solution_system}
\begin{aligned}
u &\in L^\infty(0,T; L^2(\Omega)) \cap L^2(0,T; H^1(\Omega)), \\
\partial_t u &\in L^2(0,T; H^{-1}(\Omega)), \\
\varphi &\in L^\infty(0,T; H^2(\Omega)) \cap L^2(0,T; H^3(\Omega)).
\end{aligned}
\end{equation}
Moreover, for all $t \in [0,T]$ it holds
\begin{equation}
u(\cdot,t) \ge 0 \quad a.e. \qquad \text{and} \qquad \norm{u(\cdot,t)}_{L^1(\Omega)} = 1.
\end{equation}
\end{theorem}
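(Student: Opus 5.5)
Existence comes from Schauder's fixed point theorem applied to $\cS$ on the ball $\cB_{\mathsf R}\subset\cX$. Choose $T$ small as in \cref{lem:S_defined_invariant}. Then $\cB_{\mathsf R}$ is closed, convex, bounded and invariant under $\cS$. By \cref{lem:S_continuous,lem:S_compact}, $\cS$ is continuous and compact, so it has a fixed point $u=\cS(u)$. With $\varphi=\psi$ the corresponding solution of \eqref{eq:elliptic_PDE_linear} for $v=u$, the pair $(u,\varphi)$ solves \eqref{eq:PDE_system}--\eqref{eq:PDE_system_IC} weakly.

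The regularity in \eqref{eq:spaces_solution_system} follows from the proofs of the lemmas. For $u$, the proof of \cref{lem:S_defined_invariant} gives $u\in L^\infty(0,T;L^2(\Omega))\cap L^2(0,T;H^1(\Omega))$, and the proof of \cref{lem:S_compact} gives $\partial_t u\in L^2(0,T;H^{-1}(\Omega))$. For $\varphi$ we use elliptic regularity, available since $\alpha\in\cC^\infty(\bar\Omega)$ with $\alpha\ge a$ and $\partial\Omega$ is smooth. The bound $\norm{\varphi(\cdot,t)}_{H^2}\le C\norm{u(\cdot,t)}_{L^2}$ yields $\varphi\in L^\infty(0,T;H^2(\Omega))$. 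The bound $\norm{\varphi(\cdot,t)}_{H^3}\le C\norm{u(\cdot,t)}_{H^1}$ yields $\varphi\in L^2(0,T;H^3(\Omega))$.

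Mass conservation and nonnegativity come next. Testing the Fokker--Planck equation with $\phi\equiv1$, which is admissible in the weak form, the no-flux condition gives $\frac{\d}{\d t}\int_\Omega u=0$, so $\int_\Omega u(\cdot,t)=\int_\Omega u_0=1$. For nonnegativity, test with the negative part $u^-=\max\{-u,0\}\in L^2(0,T;H^1(\Omega))$. This gives
\begin{equation}
\frac12\frac{\d}{\d t}\norm{u^-}_{L^2}^2+\sigma^2\norm{\nabla u^-}_{L^2}^2=\int_\Omega u^-\nabla u^-\cdot\nabla\varphi\dd x .
\end{equation}
The right-hand side is bounded exactly as $Q(t)$ in \cref{lem:S_defined_invariant}, using $\norm{\nabla\varphi}_{L^6}\le C\norm{u}_{L^2}\in L^\infty(0,T)$. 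Grönwall's lemma with $u_0^-=0$ then gives $u^-\equiv0$. Once $u\ge0$, we have $\norm{u(\cdot,t)}_{L^1}=\int_\Omega u(\cdot,t)=1$.

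Uniqueness is the main obstacle, because Schauder gives no contraction. Let $(u_1,\varphi_1)$ and $(u_2,\varphi_2)$ be two solutions and set $w=u_1-u_2$. Repeat the estimate from the proof of \cref{lem:S_continuous} with $v_i=u_i$:
\begin{equation}
\frac{\d}{\d t}\norm{w}_{L^2}^2\le \widetilde C\bigl(1+\norm{u_1}_{L^2}^4+\norm{u_2}_{H^1}^2\bigr)\norm{w}_{L^2}^2 .
\end{equation}
To reach this form, the cross term $\int_\Omega u_2\nabla(\varphi_1-\varphi_2)\cdot\nabla w$ must be absorbed. Bound it by $\norm{u_2}_{L^3}\norm{\nabla(\varphi_1-\varphi_2)}_{L^6}\norm{\nabla w}_{L^2}$, use $\norm{\nabla(\varphi_1-\varphi_2)}_{L^6}\le C\norm{w}_{L^2}$ from elliptic regularity, and apply Young's inequality. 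The coefficient on the right is integrable in time, since $u_i\in L^\infty(L^2)\cap L^2(H^1)$. As $w(0)=0$, Grönwall's lemma gives $w\equiv0$, and then $\varphi_1=\varphi_2$ by uniqueness for the Darcy problem.

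The delicate points are justifying the test functions $u^-$ and $w$ at this low time regularity, via the Lions--Magenes lemma with $\partial_t u\in L^2(H^{-1})$, and making sure the uniqueness argument compares arbitrary solutions in the class \eqref{eq:spaces_solution_system}, not only those in $\cB_{\mathsf R}$.
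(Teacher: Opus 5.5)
Your proposal is correct and follows essentially the same route as the paper. That route is Schauder's theorem on $\cB_{\mathsf R}$ via \cref{lem:S_defined_invariant,lem:S_continuous,lem:S_compact}, regularity read off from those proofs plus elliptic regularity, nonnegativity by testing with $u^-$ and Gr\"onwall, mass conservation from the no-flux condition, and uniqueness by rerunning the estimate of \cref{lem:S_continuous} with $v_i = u_i$. Your Gr\"onwall coefficient $\norm{u_2}_{H^1(\Omega)}^2$ in place of the paper's $\norm{u_2}_{H^1(\Omega)}$ is immaterial, since both are integrable in time, and your remarks on justifying the test functions and on uniqueness over the whole solution class are refinements the paper leaves implicit.
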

\begin{proof}
Combining \cref{lem:S_defined_invariant,lem:S_continuous,lem:S_compact} and applying the Schauder fixed-point theorem, there exists a fixed point $u \in \cX$ such that $u = \cS(u)$, which is a weak solution of system \eqref{eq:PDE_system}. Moreover, the inclusions in \eqref{eq:spaces_solution_system} follow from the proofs of \cref{lem:S_defined_invariant,lem:S_compact} and standard elliptic regularity. We then proceed as in the proof of \cref{lem:S_continuous} with $v_1 = u_1$ and $v_2 = u_2$, deducing
\begin{equation}
\frac{\d}{\d t} \norm{u_1(\cdot,t) - u_2(\cdot,t)}_{L^2(\Omega)}^2 \le \widetilde C (1 + \norm{u_1(\cdot,t)}_{L^2(\Omega)}^4 + \norm{u_2(\cdot,t)}_{H^1(\Omega)}) \norm{u_1(\cdot,t) - u_2(\cdot,t)}_{L^2(\Omega)}^2.
\end{equation}
Since $u_1(\cdot,0) = u_2(\cdot,0) = u_0$, Grönwall's inequality implies uniqueness. For the nonnegativity, testing the Fokker--Planck equation with $u^-(x,t) = - \min(u(x,t), 0)$, we obtain equation \eqref{eq:bound_before_Gronwall} with $v = u = u^-$, and using that $u \in L^\infty(0,T;L^2(\Omega))$, we deduce
\begin{equation}
\frac{\d}{\d t} \norm{u^-(\cdot,t)}_{L^2(\Omega)}^2 \le \widetilde C \norm{u^-(\cdot,t)}_{L^2(\Omega)}^2,
\end{equation}
for a constant $\widetilde C > 0$. Since $u^-(\cdot,0) = 0$ by \cref{as:alpha_u0}, Grönwall's inequality gives $u^-(\cdot,t) = 0$ $a.e.$, i.e., $u(\cdot,t) \ge 0$ $a.e.$, for all $t \in [0,T]$. Finally, the fact that $\norm{u(\cdot,t)}_{L^1(\Omega)} = 1$ for all $t \in [0,T]$ follows directly from the no-flux boundary condition in \eqref{eq:PDE_system_BC}.
\end{proof}

\begin{remark} \label{rem:time_horizon}
\cref{thm:existence_uniqueness} provides a solution on an interval $[0,T^*]$ whose length $T^* > 0$ is determined by \cref{lem:S_defined_invariant}. Inspection of the proof of that lemma shows that $T^*$ depends only on $\norm{u_0}_{L^2(\Omega)}, a, M, \sigma, \Omega, d$ through the elliptic estimate \eqref{eq:bound_nabla_psi_L6}. In particular, $T^*$ is uniform over $\alpha \in \cA_M^\ell$, which is what the analysis of \cref{sec:analysis_IP} requires. Throughout the remainder of the paper we therefore assume that the final time $T$ satisfies $T \le T^*$. This restriction on $T$ is thus uniform over $\cA_M^\ell$ and does not impose any additional restriction on the admissible set. We denote $T^\dagger$ the maximal time of existence of the solution, so that $T \le T^* \le T^\dagger \le \infty$. Whether solutions exist globally in time, i.e., whether $T^\dagger = \infty$, would require studying the free energy of the system, but is left for future work.
\end{remark}

\subsection{Boundedness and regularity of solutions} \label{ssec:BR}

In this section we derive several a priori estimates showing that the pair $(u,\varphi)$ enjoys stronger integrability properties than those obtained in \cref{thm:existence_uniqueness} for the proof of existence and uniqueness. These additional estimates will play a crucial role in \cref{sec:analysis_IP}, where we study the inverse problem. We begin by proving that the solution $u$ to the Fokker--Planck equation is uniformly bounded in both time and space.

\begin{proposition} \label{pro:boundedness_u}
Under \cref{as:alpha_u0}, for all $p \in [1,\infty]$, there exists a constant $C_p > 0$, depending on $p$, such that 
\begin{equation}
\norm{u}_{L^\infty(0,T;L^p(\Omega))} \le C_p.
\end{equation}
\end{proposition}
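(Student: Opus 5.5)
The plan is to derive $L^p$ energy estimates by testing the Fokker--Planck equation with $u^{p-1}$, using the Darcy equation to rewrite the drift term, and then pass to $p=\infty$ by a Moser/Alikakos iteration or a maximum-principle comparison. The case $p=1$ is already contained in \cref{thm:existence_uniqueness}, since $u\ge0$ and $\norm{u(\cdot,t)}_{L^1(\Omega)}=1$. By interpolation it suffices to treat large finite $p$ and then $p=\infty$.

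For $p\ge2$, multiply the Fokker--Planck equation by $u^{p-1}$ and integrate by parts. The no-flux condition makes the boundary terms vanish, giving
\begin{equation}
\frac1p\frac{\d}{\d t}\int_\Omega u^p\dd x+\sigma^2(p-1)\int_\Omega u^{p-2}\abs{\nabla u}^2\dd x=-(p-1)\int_\Omega u^{p-1}\nabla u\cdot\nabla\varphi\dd x=-\frac{p-1}{p}\int_\Omega\nabla(u^p)\cdot\nabla\varphi\dd x .
\end{equation}
In the Poisson case one integrates by parts once more and uses $-\Delta\varphi=u\ge0$, but here the boundary term $\int_{\partial\Omega}u^p\partial_\nu\varphi$ does not vanish because only $\varphi=0$ holds on $\partial\Omega$. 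Instead, we expand the Darcy equation as $\alpha\Delta\varphi=-u-\nabla\alpha\cdot\nabla\varphi$ and estimate the right-hand side directly. With $w=u^{p/2}$ we have
\begin{equation}
\abs{\int_\Omega\nabla(u^p)\cdot\nabla\varphi}\le2\norm{w}_{L^2}\norm{\nabla w}_{L^2}\norm{\nabla\varphi}_{L^\infty}.
\end{equation}
The left-hand dissipation equals $\frac{4\sigma^2(p-1)}{p^2}\norm{\nabla w}_{L^2}^2$. Young's inequality then gives
\begin{equation}
\frac{\d}{\d t}\norm{u}_{L^p}^p\le C\,p\,\norm{\nabla\varphi}_{L^\infty}^2\norm{u}_{L^p}^p .
\end{equation}
The key input is therefore a bound on $\norm{\nabla\varphi(\cdot,t)}_{L^\infty}$. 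This comes from elliptic $W^{2,q}$ regularity for the Darcy operator with $q>d$. The constants are uniform over $\cA_M^\ell$, since they depend only on $a$ and $\norm{\alpha}_{W^{1,\infty}}\le M$. This gives $\norm{\nabla\varphi}_{L^\infty}\le C\norm{u}_{L^{q}}$ for some $q\in(d,\infty)$, for instance $q=4$ when $d\le3$.

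To close the argument, I would first bootstrap. From \cref{thm:existence_uniqueness} we know $u\in L^\infty(0,T;L^2)$, which gives $\nabla\varphi\in L^\infty(0,T;L^6)$. Rerun the $L^p$ estimate with this weaker drift bound: H\"older gives $\norm{w\nabla\varphi}_{L^2}\le\norm{w}_{L^3}\norm{\nabla\varphi}_{L^6}$, and Gagliardo--Nirenberg then lets us absorb $\norm{\nabla w}$. This yields $u\in L^\infty(0,T;L^p)$ for every finite $p$, in particular $p=4$. Consequently $\norm{\nabla\varphi}_{L^\infty(0,T;L^\infty)}\le C$ uniformly, and Grönwall on the displayed inequality yields $\norm{u(t)}_{L^p}\le e^{CpT}\norm{u_0}_{L^p}$ for each finite $p$.

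The main obstacle is $p=\infty$, because the constant $e^{CpT}$ blows up and does not allow $p\to\infty$ directly. There are two routes. The first is a comparison argument: write the equation in nondivergence form, $\partial_tu=\sigma^2\Delta u+\nabla\varphi\cdot\nabla u+u\Delta\varphi$, with $\Delta\varphi$ bounded in $L^\infty$. The bounds on $\Delta\varphi$ and on $\nabla\varphi$ come from $W^{2,q}$ regularity, since $\nabla\alpha$ is bounded. Then $\bar u=B e^{Kt}$ is a supersolution, but the boundary condition is of Robin type, $\sigma^2\partial_\nu u=-u\partial_\nu\varphi$, and the comparison needs a careful sign check there. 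The second route, which I would use if the comparison fails, is a Moser--Alikakos iteration on $p_k=2^k$ with the bounded drift. Using Gagliardo--Nirenberg, each step gives
\begin{equation}
\sup_t\norm{u}_{L^{2p}}^{2p}\le\max\Bigl\{\norm{u_0}_{L^\infty}^{2p}|\Omega|,\;(Cp)^{\beta}\bigl(\sup_t\norm{u}_{L^{p}}^{p}\bigr)^2\Bigr\},
\end{equation}
and iterating yields a uniform $L^\infty$ bound depending on $B$, $M$, $a$, $\sigma$, $T$ and $\Omega$.
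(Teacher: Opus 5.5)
Your proposal is correct and is essentially the paper's proof: you test with $u^{p-1}$ and apply Gr\"onwall for finite $p$, bound $\nabla\varphi$ in $L^\infty$ via $W^{2,q}$ elliptic regularity with $q>d$, and pass to $p=\infty$ by the Moser--Alikakos iteration on $p_k=2^k$ (your second route), the only difference being that you obtain the finite-$p$ bounds by bootstrapping from $\nabla\varphi\in L^\infty(0,T;L^6(\Omega))$ (the same H\"older/Gagliardo--Nirenberg computation as in \cref{lem:S_defined_invariant}) rather than using $\varphi\in L^2(0,T;H^3(\Omega))$ with $H^3(\Omega)\hookrightarrow W^{1,\infty}(\Omega)$ as the paper does. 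Two minor points: first, the Young step produces a factor of order $p(p-1)$, not $Cp$, since with $Cp$ the $L^p$ bounds would be uniform in $p$, contradicting your own $e^{CpT}$. Second, your first route does not work as written, because $\bar u=Be^{Kt}$ gives $\sigma^2\partial_\nu\bar u+\bar u\,\partial_\nu\varphi=\bar u\,\partial_\nu\varphi\le0$ (as $\varphi\ge0$ vanishes on $\partial\Omega$), which is the wrong sign for a supersolution, and an $L^\infty$ bound on $\Delta\varphi=-\alpha^{-1}(u+\nabla\alpha\cdot\nabla\varphi)$ already presupposes $u\in L^\infty$.
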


\begin{proof}
First, assume, without loss of generality, $p\ge2$. In fact, if $p \in [1,2)$, we have $\norm{u}_{L^\infty(0,T;L^p(\Omega))} \lesssim \norm{u}_{L^\infty(0,T;L^2(\Omega))}$ by Hölder's inequality. Then, multiplying the Fokker--Planck equation in \eqref{eq:PDE_system} by $u^{p-1}$ and integrating in $\Omega$ and by parts, we obtain
\begin{equation} \label{eq:testing_up}
\frac1p \frac{\d}{\d t} \norm{u(\cdot,t)}_{L^p(\Omega)}^p + \sigma^2 (p-1) \int_\Omega \abs{\nabla u(x,t)}^2 u(x,t)^{p-2} \dd x = (1-p) \int_\Omega \nabla\varphi(x,t) \cdot \nabla u(x,t) u(x,t)^{p-1} \dd x.
\end{equation}
Applying Cauchy--Schwarz and Young's inequality to the right-hand side, we have
\begin{equation}
\begin{aligned}
\abs{\int_\Omega \nabla\varphi(x,t) \cdot \nabla u(x,t) u^{p-1}(x,t) \dd x}
&\le \frac12 \sigma^2 \int_\Omega \abs{\nabla u(x,t)}^2 u(x,t)^{p-2} \dd x \\
&\quad + \frac1{2\sigma^2} \int_\Omega \abs{\nabla\varphi(x,t)}^2 u(x,t)^p \dd x,
\end{aligned}
\end{equation}
which implies
\begin{equation} \label{eq:inequality_dudt_beforeGronwall}
\begin{aligned}
\frac{\d}{\d t} \norm{u(\cdot,t)}_{L^p(\Omega)}^p + \frac12 \sigma^2 p(p-1) \int_\Omega \abs{\nabla u(x,t)}^2 u(x,t)^{p-2} \dd x &\le \frac{p(p-1)}{2\sigma^2} \int_\Omega \abs{\nabla\varphi(x,t)}^2 u(x,t)^p \dd x \\
&\le \frac{p(p-1)}{2\sigma^2} \norm{\nabla\varphi(\cdot,t)}_{L^\infty(\Omega)}^2 \norm{u(\cdot,t)}_{L^p(\Omega)}^p.
\end{aligned}
\end{equation}
The Sobolev embedding $H^3(\Omega) \hookrightarrow W^{1,\infty}(\Omega)$, which holds true in dimensions $d \le 3$, gives for a constant $\widetilde C$ that may change from line to line
\begin{equation}
\frac{\d}{\d t} \norm{u(\cdot,t)}_{L^p(\Omega)}^p \le \widetilde C p(p-1) \norm{\varphi(\cdot,t)}_{H^3(\Omega)}^2 \norm{u(\cdot,t)}_{L^p(\Omega)}^p,
\end{equation}
and the Grönwall's inequality yields
\begin{equation}
\norm{u(\cdot,t)}_{L^p(\Omega)}^p \le \norm{u_0}_{L^p(\Omega)}^p \exp \left( \widetilde C p(p-1) \int_0^t \norm{\varphi(\cdot,s)}_{H^3(\Omega)}^2 \dd s \right).
\end{equation}
Therefore, due to the assumptions on $u_0$, we obtain for all $p \ge 1$
\begin{equation}
\begin{aligned}
\norm{u}_{L^\infty(0,T;L^p(\Omega))} &\le \norm{u_0}_{L^p(\Omega)} \exp \left( \widetilde C (p-1) \norm{\varphi}_{L^2(0,T;H^3(\Omega))}^2 \right) \\
&\le B \abs{\Omega}^{\frac1p} \exp \left( \widetilde C (p-1) \norm{\varphi}_{L^2(0,T;H^3(\Omega))}^2 \right) \eqcolon C_p,
\end{aligned}
\end{equation}
which concludes the proof for $p \ge 1$ finite. Let us now consider the case $p = \infty$. First, since $u \in L^\infty(0,T;L^p(\Omega))$ for all $p \ge 1$, elliptic regularity yields $\varphi \in L^\infty(0,T;W^{2,p}(\Omega))$. Choosing $p > d$, the Sobolev embedding $W^{2,p}(\Omega) \hookrightarrow W^{1,\infty}(\Omega)$ gives the uniform bound
\begin{equation} \label{eq:boundedness_grad_phi}
\norm{\varphi}_{L^\infty(0,T;W^{1,\infty}(\Omega))} \le \Lambda_1,
\end{equation}
for a positive constant $\Lambda_1 > 0$. Then, setting $w = u^{p/2}$, equation \eqref{eq:inequality_dudt_beforeGronwall} can be rewritten as
\begin{equation}
\frac{\d}{\d t} \norm{w(\cdot,t)}_{L^2(\Omega)}^2 + 2 \sigma^2 \frac{p-1}p \norm{\nabla w(\cdot,t)}_{L^2(\Omega)}^2 \le \frac{p(p-1) \Lambda_1^2}{2\sigma^2} \norm{w(\cdot,t)}_{L^2(\Omega)}^2.
\end{equation}
Let $p_k = 2^k$ for $k \in \N$, $k \neq 0$, and denote $w_k = u^{p_k/2}$. Then, we have
\begin{equation}
\frac{\d}{\d t} \norm{w_k(\cdot,t)}_{L^2(\Omega)}^2 + \sigma^2 \norm{\nabla w_k(\cdot,t)}_{L^2(\Omega)}^2 \le \frac{\Lambda_1^2}{2\sigma^2} p_k^2 \norm{w_k(\cdot,t)}_{L^2(\Omega)}^2.
\end{equation}
To control the right-hand side, Gagliardo--Nirenberg--Sobolev interpolation inequality gives
\begin{equation}
\norm{w_k(\cdot,t)}_{L^2(\Omega)}^2 \le \widetilde C \left( \norm{\nabla w_k(\cdot,t)}_{L^2(\Omega)}^{\frac{2d}{2+d}} \norm{w_k(\cdot,t)}_{L^1(\Omega)}^{\frac4{2+d}} + \norm{w_k(\cdot,t)}_{L^1(\Omega)}^2 \right),
\end{equation}
and Young's inequality implies
\begin{equation}
\frac{\Lambda_1^2}{2\sigma^2} p_k^2 \norm{w_k(\cdot,t)}_{L^2(\Omega)}^2 \le \sigma^2 \norm{\nabla w_k(\cdot,t)}_{L^2(\Omega)}^2 + \widetilde C p_k^{d+2} \norm{w_k(\cdot,t)}_{L^1(\Omega)}^2.
\end{equation}
Therefore, we obtain
\begin{equation}
\frac{\d}{\d t} \norm{w_k(\cdot,t)}_{L^2(\Omega)}^2 \le \widetilde C p_k^{d+2} \norm{w_k(\cdot,t)}_{L^1(\Omega)}^2,
\end{equation}
which, integrating in time, gives
\begin{equation}
\norm{w_k(\cdot,t)}_{L^2(\Omega)}^2 \le \norm{w_k(\cdot,0)}_{L^2(\Omega)}^2 + \widetilde C T p_k^{d+2} \sup_{s \in (0,T)} \norm{w_k(\cdot,s)}_{L^1(\Omega)}^2.
\end{equation}
Recalling that $w_k^2 = u^{p_k}$ and $p_k = 2 p_{k-1}$, we deduce
\begin{equation}
\norm{u(\cdot,t)}_{L^{p_k}(\Omega)}^{p_k} \le \norm{u_0}_{L^{p_k}(\Omega)}^{p_k} + \widetilde C T p_k^{d+2} \sup_{s \in (0,T)} \norm{u(\cdot,s)}_{L^{p_{k-1}(\Omega)}}^{p_k},
\end{equation}
and defining
\begin{equation}
\cM_k \coloneqq \max \left\{ 1, \norm{u_0}_{L^\infty(\Omega)}, \sup_{t \in (0,T)} \norm{u(\cdot,t)}_{L^{p_k}(\Omega)} \right\},
\end{equation}
we obtain
\begin{equation}
\cM_k^{p_k} \le \widetilde C 2^{(d+2)k} \cM_{k-1}^{p_k},
\end{equation}
which implies
\begin{equation}
\cM_k \le \widetilde C^{2^{-k}} 2^{(d+2) k 2^{-k}} \cM_{k-1}.
\end{equation}
Iterating this relation, we find
\begin{equation}
\cM_k \le \cM_0 \prod_{j=1}^k \widetilde C^{2^{-j}} 2^{(d+2) j 2^{-j}} = \cM_0 \widetilde C^{\sum_{j=1}^k 2^{-j}} 2^{(d+2) \sum_{j=1}^k j 2^{-j}},
\end{equation}
where, since $\norm{u(\cdot,t)}_{L^1(\Omega)} = 1$ for all $t$ by \cref{thm:existence_uniqueness}, the starting value is explicit,
\begin{equation}
\cM_0 = \max \left\{ 1, \norm{u_0}_{L^\infty(\Omega)} \right\} \le \max\{1, B\}.
\end{equation}
Since the series at the exponent satisfies
\begin{equation}
 \sum_{j=0}^\infty 2^{-j} = 2 \qquad \text{and} \qquad \sum_{j=0}^\infty j 2^{-j} = 2,
\end{equation}
taking the limit as $k \to \infty$ yields
\begin{equation}
\norm{u}_{L^\infty(0,T;L^\infty(\Omega))} \le \lim_{k\to\infty} \cM_k \le \max\{1,B\} \, \widetilde C^2 2^{2(d+2)} \eqcolon C_\infty,
\end{equation}
which concludes the proof.
\end{proof}

The uniform bounds established in \cref{pro:boundedness_u} immediately yield higher regularity for the solution $\varphi$ to the Darcy equation, as shown in the next result.

\begin{corollary} \label{cor:boundedness_grad_Delta_phi}
Under \cref{as:alpha_u0}, there exists two constants $\Lambda_1, \Lambda_2 > 0$ such that 
\begin{equation}
\norm{\varphi}_{L^\infty(0,T;W^{1,\infty}(\Omega))} \le \Lambda_1 \qquad \text{and} \qquad \norm{\Delta\varphi}_{L^\infty(0,T;L^\infty(\Omega))} \le \Lambda_2 \coloneq \frac1a (M \Lambda_1 + C_\infty).
\end{equation}
\end{corollary}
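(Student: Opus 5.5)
The first bound is obtained inside the proof of \cref{pro:boundedness_u}. The plan is to make that step explicit and uniform in $\alpha$, and then read off the Laplacian bound pointwise by expanding the divergence in the Darcy equation.

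For the gradient bound, we use \cref{pro:boundedness_u} with some fixed $p > d$ (say $p = 4$, which suffices for $d \le 3$), giving $\sup_t \norm{u(\cdot,t)}_{L^p(\Omega)} \le C_p$. At each fixed $t$, $\varphi(\cdot,t)$ solves the Dirichlet problem $-\nabla\cdot(\alpha\nabla\varphi) = u$ with $\alpha \ge a$ and $\norm{\alpha}_{W^{1,\infty}(\Omega)} \le M$. The $L^p$ elliptic regularity theory (Agmon--Douglis--Nirenberg / Gilbarg--Trudinger, Theorem 9.15 and Lemma 9.17) then gives
\begin{equation}
\norm{\varphi(\cdot,t)}_{W^{2,p}(\Omega)} \le \widetilde C \norm{u(\cdot,t)}_{L^p(\Omega)},
\end{equation}
with $\widetilde C$ depending only on $a, M, p, \Omega$. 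Indeed, the non-divergence form of the operator has Lipschitz leading coefficient $\alpha$, with modulus of continuity controlled by $M$, and bounded first-order coefficient $\nabla\alpha$, so the constant is uniform over $\cA_M^\ell$. The Morrey embedding $W^{2,p}(\Omega) \hookrightarrow W^{1,\infty}(\Omega)$ for $p > d$ then yields $\norm{\varphi}_{L^\infty(0,T;W^{1,\infty}(\Omega))} \le \Lambda_1$. The only delicate point is checking that the elliptic constant depends on $\alpha$ only through $a$ and $M$; this is the step I expect to require the most care, although it is standard.

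For the Laplacian, we use smoothness: for a.e.\ $t$, the regularity $\varphi(\cdot,t) \in H^3(\Omega)$ from \cref{thm:existence_uniqueness}, or the $W^{2,p}$ regularity above, justifies expanding the equation a.e.\ as
\begin{equation}
-\alpha\Delta\varphi - \nabla\alpha\cdot\nabla\varphi = u,
\qquad\text{so}\qquad
\Delta\varphi = -\frac{1}{\alpha}\left(u + \nabla\alpha\cdot\nabla\varphi\right).
\end{equation}
We then bound each factor pointwise: $1/\alpha \le 1/a$, $\abs{\nabla\alpha} \le \norm{\alpha}_{W^{\ell,\infty}(\Omega)} \le M$, $\abs{\nabla\varphi} \le \Lambda_1$ from the first step, and $\abs{u} \le C_\infty$ from the case $p = \infty$ of \cref{pro:boundedness_u}. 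This gives a.e.\ in $\Omega\times(0,T)$
\begin{equation}
\abs{\Delta\varphi} \le \frac{1}{a}\left(M\Lambda_1 + C_\infty\right) = \Lambda_2,
\end{equation}
and taking the essential supremum over $\Omega\times(0,T)$ concludes the proof.
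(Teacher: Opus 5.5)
Your proposal is correct and follows the paper's own argument: the gradient bound is the $W^{2,p}$ elliptic regularity plus Sobolev embedding (with $p>d$) step already carried out in the proof of \cref{pro:boundedness_u}, and the Laplacian bound comes from expanding the Darcy equation as $\Delta\varphi = -\nabla\log\alpha\cdot\nabla\varphi - u/\alpha$ and bounding each term pointwise by $M$, $a$, $\Lambda_1$ and $C_\infty$. Your explicit check that the elliptic constant depends on $\alpha$ only through $a$ and $M$ makes precise the uniformity over $\cA_M^\ell$ that the paper asserts without detail.
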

\begin{proof}
The first estimate is given in equation \eqref{eq:boundedness_grad_phi} in the proof of \cref{pro:boundedness_u}. Then, from the Darcy equation in \eqref{eq:PDE_system}, we deduce
\begin{equation}
\Delta\varphi = - \nabla\log\alpha \cdot \nabla\varphi - \frac1\alpha u,
\end{equation}
and the second bound follows from the first bound and \cref{pro:boundedness_u}.
\end{proof}

In addition to the upper bounds derived in \cref{pro:boundedness_u}, the next result shows that $u$ is also bounded from below by a positive constant, uniformly in both time and space. The argument follows the same strategy as the proofs of \cite[Lemma 5.1]{LaL23}, \cite[Lemma 2.18]{PaZ25}, and \cite[Theorem 5]{NPR25}, which in turn are based on the proof of \cite[Theorem 2]{GLM24}. The main difference is that, since our setting involves a bounded domain $\Omega$, we must additionally employ the Skorokhod equation.

\begin{proposition} \label{pro:boundedness_u_below}
Under \cref{as:alpha_u0}, there exists a constant $c_0 > 0$ such that
\begin{equation}
u(x,t) \ge c_0 \coloneq b e^{- \Lambda_2 T}, \quad \text{for all } x \in \Omega \text{ and } t \in [0,T],
\end{equation}
where $\Lambda_2$ is defined in \cref{cor:boundedness_grad_Delta_phi}.
\end{proposition}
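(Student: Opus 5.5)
The plan is to represent $u$ through a stochastic (Feynman--Kac) formula along reflected diffusions, following \cite[Theorem 2]{GLM24}. Expanding the Fokker--Planck equation gives
\begin{equation}
\partial_t u = \sigma^2 \Delta u + \nabla\varphi \cdot \nabla u + (\Delta\varphi)\, u .
\end{equation}
This is a linear parabolic equation for $u$ with drift $\nabla\varphi$ and potential $\Delta\varphi$. By \cref{cor:boundedness_grad_Delta_phi}, the drift is bounded by $\Lambda_1$ and the potential by $\Lambda_2$, uniformly in $(x,t)$ and over $\alpha \in \cA_M^\ell$.

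The boundary condition simplifies because $\varphi = 0$ on $\partial\Omega$. The no-flux condition then reads $\sigma^2 \partial_\nu u = -u\, \partial_\nu\varphi$, which is still Robin-type. So I will instead work with a transformed unknown, or more directly use time reversal.

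Fix $t \in (0,T]$ and $x \in \Omega$. Let $X_s$, $s \in [0,t]$, solve the Skorokhod problem
\begin{equation}
\d X_s = \nabla\varphi(X_s, t-s) \dd s + \sqrt{2}\,\sigma \dd W_s - \nu(X_s) \dd L_s, \qquad X_0 = x,
\end{equation}
where $L$ is the local time of $X$ on $\partial\Omega$. Apply Itô's formula to $Z_s = u(X_s, t-s) \exp\bigl(\int_0^s \Delta\varphi(X_r, t-r) \dd r\bigr)$. The drift terms cancel by the PDE. The remaining boundary term is $-\partial_\nu u\, \d L_s$ times the exponential factor, which by the no-flux condition equals $\sigma^{-2} u\, \partial_\nu\varphi\, \d L_s$ times that factor. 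To remove it, I will multiply by the additional factor $\exp\bigl(-\sigma^{-2}\int_0^s \partial_\nu\varphi \dd L_r\bigr)$. Since $u \ge 0$, a cleaner route is to note that this boundary term has a sign-definite structure. Either way, $Z$ with the extra factor becomes a martingale, and taking expectations yields
\begin{equation}
u(x,t) = \E\Bigl[u_0(X_t) \exp\Bigl(\int_0^t \Delta\varphi(X_r,t-r)\dd r - \sigma^{-2}\int_0^t \partial_\nu\varphi(X_r,t-r)\dd L_r\Bigr)\Bigr].
\end{equation}
We then bound $u_0 \ge b$ and $\Delta\varphi \ge -\Lambda_2$.

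The boundary factor is the main obstacle, because the claimed constant $c_0 = b e^{-\Lambda_2 T}$ has no trace of it. I first try to show $\partial_\nu\varphi \le 0$ on $\partial\Omega$, which makes that exponent nonnegative so the factor can be dropped. This follows from the maximum principle: $-\nabla\cdot(\alpha\nabla\varphi) = u \ge 0$ with $\varphi = 0$ on $\partial\Omega$ gives $\varphi \ge 0$ in $\Omega$. Hence the outward normal derivative at the boundary, where $\varphi$ attains its minimum $0$, satisfies $\partial_\nu\varphi \le 0$. Combining the three bounds gives $u(x,t) \ge b e^{-\Lambda_2 t} \ge c_0$.

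Two technical points remain. First, the sign of the local-time term must be tracked carefully with the reflection convention $-\nu\,\d L$. Second, the Itô computation requires $u \in C^{2,1}$ up to the boundary on $(0,t]$. For that I will invoke smoothness from parabolic regularity for smooth $\alpha, u_0$ on $[\varepsilon, T]$ and let $\varepsilon \to 0$ using continuity of $u$ at $t=0$. Alternatively, the same conclusion can be reached purely analytically: take $w = e^{\Lambda_2 t} u - b$, check that $w$ is a supersolution with a boundary operator of favourable sign, and apply the weak minimum principle by testing with $w^-$.
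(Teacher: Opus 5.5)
Your proposal is correct and follows the paper's proof essentially step for step. The shared ingredients are the time-reversed reflected diffusion with drift $\nabla\varphi(\cdot,t-s)$, the weight $\exp\bigl(\int_0^s \Delta\varphi \,\mathrm{d}r - \sigma^{-2}\int_0^s \partial_\nu\varphi \,\mathrm{d}L_r\bigr)$, the bounds $\Delta\varphi \ge -\Lambda_2$ and $\partial_\nu\varphi \le 0$ from the maximum principle, and the limit $\varepsilon \to 0$ from $[\varepsilon,T]$ using continuity of $u$ at $t=0$. One small point: as in the paper, the lower bound only needs the supermartingale inequality $u(x,t) \ge \E[\cdot]$, since the weighted process is a nonnegative local martingale; claiming a true martingale (with equality) would additionally require exponential moments of the local time, which you do not justify.
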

\begin{proof}
Let us first rewrite the Fokker--Planck equation in \eqref{eq:PDE_system} as
\begin{equation} \label{eq:PDE_expanded}
\partial_t u = \sigma^2 \Delta u + \nabla\varphi \cdot \nabla u + \Delta\varphi u,
\end{equation}
and consider the unique strong solution $\cZ_t$ of the normally reflected diffusion on $\bar\Omega$ \cite{LiS84}
\begin{equation} \label{eq:reflected_SDE}
\d\cZ_t = \nabla\varphi(\cZ_t, t - s) \dd t + \sqrt{2\sigma^2} \dd W_t - \nu(\cZ_t) \dd L_t,
\end{equation}
with initial condition $\cZ_0 = x \in \Omega$, where $W_t$ is a standard $d$-dimensional Brownian motion, $\nu$ is the outward unit normal to $\partial\Omega$, and $L_t$ is the continuous, non-decreasing boundary local time process. The local time $L_t$ starts at $0$ and increases only when $\cZ_t \in \partial\Omega$, satisfying
\begin{equation}
\int_0^t \mathbbm{1}_{\{\cZ_s \in \partial\Omega\}} \mathrm{d} L_s = L_t.
\end{equation}
Fixing $t \in (0,T]$, we define the backward-in-time function $w(y, s) = u(y, t-s)$ for $y \in \bar\Omega$ and $s \in [0,t-\varepsilon]$ with $\varepsilon \in (0,t)$. Applying Itô--Skorokhod formula for reflected diffusions (see, e.g., \cite[Chapter 7]{IkW89}) to $w(\cZ_s, s)$ yields
\begin{equation} \label{eq:ito_reflected}
\begin{aligned}
\d w(\cZ_s, s) &= \left( \partial_s w(\cZ_s, s) + \nabla\varphi(\cZ_s, t-s) \cdot \nabla w(\cZ_s, s) + \sigma^2 \Delta w(\cZ_s, s) \right) \dd s \\
&\quad + \sqrt{2\sigma^2} \nabla w(\cZ_s, s) \cdot \d W_s - \frac{\partial w}{\partial\nu}(\cZ_s, s) \dd L_s,
\end{aligned}
\end{equation}
which due to equation \eqref{eq:PDE_expanded} and the no-flux boundary conditions \eqref{eq:PDE_system_BC}, implies
\begin{equation}
\d w(\cZ_s, s) = - \Delta\varphi(\cZ_s, t-s) w(\cZ_s, s) \dd s + \sqrt{2\sigma^2} \nabla w(\cZ_s, s) \cdot \d W_s + \frac1{\sigma^2} w(\cZ_s, s) \frac{\partial \varphi}{\partial\nu}(\cZ_s, t-s) \dd L_s.
\end{equation}
Note that, in order to apply the Itô--Skorokhod formula to $w$, we need $u \in \mathcal C^{2,1}(\bar\Omega \times [\varepsilon, T])$, which is guaranteed by \cite{BWN94} and standard parabolic regularity theory \cite[Chapter III]{LSU68}.
Let now $\cY_s = \cE_s w(\cZ_s,s)$, where
\begin{equation}
\cE_s \coloneq \exp \left( \int_0^s \Delta\varphi(\cZ_r, t-r) \dd r - \frac1{\sigma^2} \int_0^s \frac{\partial \varphi}{\partial\nu}(\cZ_r, t-r) \dd L_r \right),
\end{equation}
and notice that $\cY_s \ge 0$ and
\begin{equation}
\d \cY_s = \sqrt{2\sigma^2} \cE_s \nabla w(\cZ_s,s) \cdot \dd W_s.
\end{equation}
Therefore, $\cY_s$ is a supermartingale and it follows
\begin{equation}
u(x,t) = \cE_0 w(\cZ_0, 0) = \cY_0 \ge \E[\cY_{t-\varepsilon}] = \E[\cE_{t-\varepsilon} w(\cZ_{t-\varepsilon}, t-\varepsilon)] = \E[\cE_{t-\varepsilon} u(\cZ_{t-\varepsilon},\varepsilon)],
\end{equation}
which implies
\begin{equation} \label{eq:supermatringale_bound}
u(x,t) \ge \inf_{x \in \Omega} u(x, \varepsilon) \E[\cE_{t-\varepsilon}].
\end{equation}
Then, the second estimate in \cref{cor:boundedness_grad_Delta_phi} gives
\begin{equation} \label{eq:bound_int_Deltaphi}
\int_0^s \Delta\varphi(\cZ_r, t-r) \dd r \ge - \Lambda_2 s \ge - \Lambda_2 T.
\end{equation}
Moreover, from the Darcy equation $-\nabla \cdot (\alpha \nabla\varphi) = u\ge 0$ with homogeneous Dirichlet boundary conditions $\varphi = 0$ on $\partial\Omega$, the maximum principle yields $\varphi \ge 0$ in $\Omega$. Hence, applying Hopf's lemma, the outward normal derivative satisfies
\begin{equation} \label{eq:dphi_sign}
\frac{\partial \varphi}{\partial\nu}(\cZ_r, t-r) \le 0.
\end{equation}
Since $L_r$ is nondecreasing, which yields $dL_r \ge 0$, equations \eqref{eq:bound_int_Deltaphi} and \eqref{eq:dphi_sign} give
\begin{equation}
\cE_{t-\varepsilon} \ge e^{- \Lambda_2 T},
\end{equation}
which, together with equation \eqref{eq:supermatringale_bound}, implies
\begin{equation}
u(x,t) \ge \inf_{x \in \Omega} u(x, \varepsilon) e^{- \Lambda_2 T}.
\end{equation}
Finally, by the smoothness of the initial condition \eqref{eq:PDE_system_IC}, the no-flux boundary condition \eqref{eq:PDE_system_BC}, and the boundedness of the coefficients due to \cref{cor:boundedness_grad_Delta_phi}, the solution of the parabolic PDE satisfies $u \in \cC^0(\bar\Omega \times [0,T])$ without requiring any compatibility condition. Therefore, $u(\cdot,\varepsilon) \to u_0$ uniformly as $\varepsilon\to0$, and by \cref{as:alpha_u0} we deduce
\begin{equation}
u(x,t) \ge \lim_{\varepsilon\to0} \inf_{x \in \Omega} u(x, \varepsilon) e^{- \Lambda_2 T} = \inf_{x \in \Omega} u_0(x) e^{- \Lambda_2 T} \ge b e^{- \Lambda_2 T} \eqcolon c_0,
\end{equation}
which concludes the proof.
\end{proof}

\section{Analysis of the inverse problem} \label{sec:analysis_IP}

In this section, we provide an analysis of the statistical inverse problem of recovering $\alpha$ from $\cD_N$, as defined in \cref{ssec:IP}. First, we define and discuss the problem setting. Then we derive a stability estimate in \cref{ssec:SE}, and we finally deploy this to obtain a posterior contraction property in \cref{ssec:PCR}.  To ensure that the permittivity $\alpha$ is automatically bounded from below by $a$ without any truncation, we reparametrize via
\begin{equation} \label{eq:f2alpha}
\alpha = a + e^f,
\end{equation}
where $f \colon \Omega \to \R$ is a real-valued function to be given a Gaussian process prior. The true log-shifted permittivity is $f_0 = \log(\alpha_0-a)$, and the admissible set for $f$ reads
\begin{equation}
\cF_{M'}^\ell \coloneq \left\{ f \in W^{\ell,\infty}(\Omega) \colon \norm{f}_{W^{\ell,\infty}(\Omega)} \le M' \right\},
\end{equation}
for some $M' > 0$ different from $M$ in $\cA_M^\ell$, such that if $f \in \cF_{M'}^\ell$, then $a + e^f \in \cA_M^\ell$. Specifically, we remark that such $M'$ always exists, and we introduce the operator $\fa \colon \cF_{M'}^\ell \to \cA_M^\ell$ that maps $f$ to $\alpha$ through equation \eqref{eq:f2alpha}. Throughout, we assume without loss of generality that $M \le M'$, which may always be arranged by enlarging $M'$.

It is convenient at this point to record the Bayesian problem in compact form. Collecting the $N$ observations $Y \coloneq (Y_1,\dots,Y_N)$ defined by \eqref{eq:observations} into a single vector, writing $\iota \coloneq \{(x_n,t_n)\}_{n=1}^N$ for the design points and $\eta \coloneq (\eta_1,\dots,\eta_N)$ for the noise, the data are generated as
\begin{equation} \label{eq:bayesian_model}
Y = \cG(f;\iota) + \eta, \qquad
\cG(f;\iota) \coloneq \left( u(x_n,t_n; \fa(f)) \right)_{n=1}^N,
\end{equation}
and the object of interest is the conditional law of $f$ given $(Y,\iota)$, that is, the posterior distribution $\Pi_N(\cdot \mid \cD_N)$ defined in \eqref{eq:posterior}. The forward map $\cG(\cdot\,;\iota)$ is nonlinear, and the unknown $f$ enters it only through the elliptic equation in \eqref{eq:PDE_system}. This formulation makes clear the sense in which the admissible sets $\cA_M^\ell$ and $\cF_{M'}^\ell$ are used in what follows, a point worth making explicit because the two constraints defining them are of a different nature under a Gaussian prior. The regularity $f \in W^{\ell,\infty}(\Omega)$ holds almost surely, by \cref{as:prior} and the Sobolev embedding $H^\beta(\Omega) \hookrightarrow W^{\ell,\infty}(\Omega)$ guaranteed by \eqref{eq:conditions_regularity}; the bound $\norm{f}_{W^{\ell,\infty}(\Omega)} \le M'$, by contrast, does not hold, since a centered Gaussian measure on an infinite-dimensional space charges every norm ball with probability strictly less than one. The bound is therefore a property of the individual realization, and all the estimates of \cref{sec:analysis_PDE,ssec:SE} are correspondingly deterministic: they hold pathwise for every $\alpha \in \cA_M^\ell$, and are applied to a posterior draw $\fa(f)$ only when that draw satisfies the bound. Accordingly, the contraction statements of \cref{ssec:PCR} are formulated on the sets $\Theta_{M,N}$ of \eqref{eq:def_Theta_MN}, on which the bound holds by construction; that this entails no loss is discussed in \cref{rem:ThetaMN}.

We now define the Gaussian process prior for the function $f$. Let $s,\beta > 0$ be regularity parameters with 
\begin{equation} \label{eq:conditions_regularity}
\beta > \ell + \frac{d}2 \qquad \text{and} \qquad s > \beta + \frac{d}2 > \ell + d,
\end{equation}
and consider a centred Gaussian probability measure $\Pi$ on a separable Banach space $\cW$ satisfying
\begin{equation} \label{eq:embedding_W}
\cW \hookrightarrow H^\beta(\Omega),
\end{equation}
whose reproducing kernel Hilbert space (RKHS) $(\cH,\|\cdot\|_\cH)$ satisfies
\begin{equation} \label{eq:embedding_H}
\cH \hookrightarrow H^s(\Omega).
\end{equation}
Note that this is always possible due to \cite[Theorem B.1.3]{Nic23}. We summarize these conditions in \cref{as:prior} below.

\begin{assumption} \label{as:prior}
The centred Gaussian measure $\Pi$ on $\cW$ has RKHS $\cH$ with embeddings \eqref{eq:embedding_W} and \eqref{eq:embedding_H}, where $\beta$ and $s$ satisfy condition \eqref{eq:conditions_regularity}. Moreover, the true permittivity $\alpha_0$ is such that $f_0 = \log(\alpha_0 - a) \in \cH$.
\end{assumption}

Notice that, due to \cref{as:prior}, the Sobolev embedding $H^\beta \hookrightarrow W^{\ell,\infty}(\Omega)$ implies that the support $\cW$ is included in $W^{\ell,\infty}(\Omega)$. We then follow the approach in \cite[Section 2]{Nic23} and rescale the base prior to introduce additional regularization. We define the regularized prior $\Pi_N$ as the law of the Gaussian process $f/(\sqrt N \delta_N)$ with $f \sim \Pi$ and
\begin{equation} \label{eq:delta_N}
\delta_N = N^{- \frac{s}{2s + d}}.
\end{equation}
The posterior distribution of the regularized prior $\Pi_N$ given the data $\cD_N$ is obtained using the standard Bayes' formula and is given by
\begin{equation} \label{eq:posterior}
\d\Pi_N(f | \cD_N) \propto e^{\cL_N(f)} \d\Pi_N(f),
\end{equation}
where the log-likelihood is
\begin{equation}
\cL_N(f) = - \frac1{2\gamma^2} \sum_{n=1}^N \left( Y_n - u(x_n,t_n; \fa(f)) \right)^2.
\end{equation}
The posterior distribution $\pi_N(\cdot | \cD_N)$ on the permittivity $\alpha$ is then the pushforward measure of $\Pi_N(\cdot | \cD_N)$ through $\fa$, i.e., $\pi_N(\cdot | \cD_N) = \fa_\# \Pi_N(\cdot | \cD_N)$. Moreover, let us define the posterior mean estimator $\widehat\alpha_N$ as
\begin{equation} \label{eq:posterior_mean_estimator}
\widehat\alpha_N = \E^{\Pi_N} \left[ \fa(f) | \cD_N \right];
\end{equation}
this can be approximated, for example, using Markov Chain Monte Carlo or sequential Monte Carlo algorithms.

For the subsequent analysis, we impose an additional regularity assumption on the solution of the system \eqref{eq:PDE_system}.

\begin{assumption} \label{as:regularity}
There exists $\kappa \in \R$ with $\max \{ 2, 1+d/2 \} < \kappa \le \ell - 1$ such that, for every $\bar t \in (0,T)$, there is a constant $\Gamma_\kappa = \Gamma_\kappa(\bar t) > 0$ for which, for all $\alpha \in \cA_M^\ell$, the solution u satisfies the higher spatial and temporal regularity estimate
\begin{equation} \label{eq:spatial_estimates}
\norm{u}_{L^\infty(\bar t,T;H^\kappa(\Omega))} + \norm{u}_{H^{\kappa/2}(\bar t,T;L^2(\Omega))} \le \Gamma_\kappa.
\end{equation} 
\end{assumption}

\begin{remark} \label{rem:optimal_regularity}
It can be shown that \cref{as:regularity} holds for some $2 < \kappa < 3$, although this is not expected to be the optimal regularity. Indeed, following the arguments in \cite{BWN94} and applying standard parabolic regularity theory, as presented for example in \cite[Chapter III]{LSU68}, one obtains $u \in \cC^{2 + \mathsf r, 1 + \mathsf r/2}(\Omega \times (0,T])$ for some $\mathsf r \in (0,1)$. Consequently, by Sobolev embedding, for every $\bar t > 0$ and all $t \in [\bar t,T]$, the spatial regularity satisfies $u(\cdot,t) \in H^\kappa(\Omega)$ and in time it holds $u \in H^{\kappa/2}(\bar t,T;L^2(\Omega))$, for some $\kappa \in (2, 2 + \mathsf r)$. Thus, \cref{as:regularity} is certainly satisfied in dimensions $d \le 2$, whereas for $d = 3$ we would need $\mathsf r \in (1/2,1)$. Nevertheless, we believe that it should be possible to establish the estimate with $\kappa = \ell - 1$, corresponding to the highest regularity allowed by the regularity of the diffusion coefficient in the Darcy equation. Proving this sharper regularity estimate is left for future work.
\end{remark}

\begin{remark} \label{rem:compatibility_conditions}
The estimate in \cref{as:regularity} is stated on an interval bounded away from the initial time, and this restriction is not merely technical. Regularity of solutions of parabolic initial-boundary value problems up to the parabolic corner $\partial\Omega \times \{0\}$ requires compatibility conditions between the initial condition and the boundary condition \cite[Chapter IV]{LSU68}. In the present setting these conditions fail for generic initial data because the initial potential is not free but depends on $u_0$ through the elliptic problem, so that $\varphi(\cdot,0)$ is a nonlocal functional of $u_0$. Already the first order compatibility condition is therefore a nonlocal constraint on $u_0$. Moreover, since $u_0 \ge b > 0$ implies $\varphi(\cdot,0) \ge 0$ and hence, by the Hopf lemma, $\partial\varphi(\cdot, 0) / \partial\nu < 0$ on $\partial\Omega$, it forces the definite sign $\partial u_0 / \partial\nu > 0$ everywhere on $\partial\Omega$, which fails, e.g., for any admissible $u_0$ that is constant near the boundary. Consequently, one cannot expect the bound of \cref{as:regularity} to hold uniformly up to $\bar t = 0$, and $\Gamma_\kappa$ is expected to blow up as $\bar t \to 0$. This is the reason the observation window $[t_0, t_1]$ is taken with $t_0 > 0$. We remark that this issue would be invisible in the periodic setting, where there is no boundary and hence no compatibility condition.
\end{remark}

\subsection{Stability estimates} \label{ssec:SE}

Before deriving contraction properties for the posterior distribution, we establish stability estimates for the system \eqref{eq:PDE_system}. In particular, we first prove a Lipschitz-type stability estimate showing continuous dependence of the Fokker--Planck solution $u$ on $\alpha$ in $L^2$. We then derive a complementary estimate in the reverse direction, providing control of the coefficient $\alpha$ in terms of the solution $u$. Both estimates hold uniformly in $\cA_M^\ell$, and the constants appearing in the results depend only on $a,b,B,M,\sigma,T,\Omega$, and $u_0$.

The following relative entropy estimate is the key ingredient in the proof of the forward stability estimate. An analogous result in the whole space $\R^d$ was established in \cite[Theorem 1.1]{BRS16} and subsequently strengthened in \cite[Lemma 3.1]{LaL23}. Here, we prove the corresponding estimate for a bounded domain $\Omega$ with no-flux boundary conditions.

\begin{lemma} \label{lem:entropy_estimate}
Let $\alpha_1, \alpha_2 \in \cA^\ell_M$ and let $(u_1, \varphi_1)$, $(u_2,\varphi_2)$ be the corresponding solutions of the system \eqref{eq:PDE_system} with boundary conditions \eqref{eq:PDE_system_BC} and the common initial condition $u_0$ in \eqref{eq:PDE_system_IC}. Then, for every $t \in [0,T]$, it holds
\begin{equation}
\Ent(u_1(\cdot, t) | u_2(\cdot, t)) \le \frac1{4\sigma^2} \int_0^t \int_\Omega \abs{\nabla\varphi_1(x,s) - \nabla\varphi_2(x,s)}^2 u_1(x,s) \dd x \dd s,
\end{equation}
where the relative entropy is defined for two probability density functions $v_1,v_2$ as
\begin{equation}
\Ent(v_1|v_2) = \int_\Omega v_1(x) \log \left( \frac{v_1(x)}{v_2(x)} \right) \dd x.
\end{equation}
\end{lemma}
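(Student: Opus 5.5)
We compute the time derivative of the relative entropy directly and close with Young's inequality, as in \cite[Theorem 1.1]{BRS16}. The only new point is that the no-flux boundary condition must kill every boundary term. Write the equations in flux form, $\partial_t u_i = \nabla \cdot J_i$, with
\begin{equation}
J_i = \sigma^2 \nabla u_i + u_i \nabla\varphi_i = u_i\left(\sigma^2 \nabla \log u_i + \nabla\varphi_i\right),
\end{equation}
and $J_i \cdot \nu = 0$ on $\partial\Omega$ by \eqref{eq:PDE_system_BC}. By \cref{pro:boundedness_u,pro:boundedness_u_below}, $c_0 \le u_i \le C_\infty$, so $\log(u_1/u_2)$ and $u_1/u_2$ are bounded. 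On $[\varepsilon,T]$ the solutions are $\cC^{2,1}$, as noted in the proof of \cref{pro:boundedness_u_below}, so the following computation is justified there. We then let $\varepsilon \to 0$, using the continuity of $u_i$ up to $t=0$ and $\Ent(u_1(\cdot,0)|u_2(\cdot,0)) = \Ent(u_0|u_0) = 0$.

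For the computation itself, set $h = \log(u_1/u_2)$. Using $\int_\Omega \partial_t u_1 \dd x = 0$ (mass conservation, \cref{thm:existence_uniqueness}), we get
\begin{equation}
\frac{\d}{\d t}\Ent(u_1|u_2) = \int_\Omega \partial_t u_1 \, h \dd x - \int_\Omega \frac{u_1}{u_2}\,\partial_t u_2 \dd x .
\end{equation}
Substituting $\partial_t u_i = \nabla\cdot J_i$ and integrating by parts, the boundary terms $\int_{\partial\Omega} h\, J_1\cdot\nu$ and $\int_{\partial\Omega} (u_1/u_2) J_2\cdot\nu$ both vanish. This leaves
\begin{equation}
\frac{\d}{\d t}\Ent = -\int_\Omega J_1\cdot\nabla h \dd x + \int_\Omega J_2\cdot \nabla\!\left(\frac{u_1}{u_2}\right) \dd x .
\end{equation}
Since $\nabla(u_1/u_2) = (u_1/u_2)\nabla h$ and $J_i/u_i = \sigma^2\nabla\log u_i + \nabla\varphi_i$, the two terms combine into
\begin{equation}
\frac{\d}{\d t}\Ent = -\int_\Omega u_1 \nabla h \cdot\left(\sigma^2 \nabla h + \nabla\varphi_1 - \nabla\varphi_2\right)\dd x .
\end{equation}

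Young's inequality gives
\begin{equation}
-u_1 \nabla h\cdot(\nabla\varphi_1-\nabla\varphi_2) \le \sigma^2 u_1 \abs{\nabla h}^2 + \frac{1}{4\sigma^2} u_1 \abs{\nabla\varphi_1-\nabla\varphi_2}^2,
\end{equation}
and the first term on the right cancels exactly the Fisher information term $-\sigma^2\int_\Omega u_1\abs{\nabla h}^2$. Hence
\begin{equation}
\frac{\d}{\d t}\Ent(u_1|u_2) \le \frac{1}{4\sigma^2}\int_\Omega \abs{\nabla\varphi_1-\nabla\varphi_2}^2 u_1 \dd x .
\end{equation}
Integrating from $0$ to $t$ yields the claim.

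The main obstacle is rigor near $t=0$ and at the boundary, not the algebra. The regularity is only available on $[\varepsilon,T]$, so we would integrate on $[\varepsilon,t]$ and pass to the limit. The limit $\Ent(u_1(\cdot,\varepsilon)|u_2(\cdot,\varepsilon)) \to 0$ follows from uniform convergence $u_i(\cdot,\varepsilon)\to u_0$, which was established in the proof of \cref{pro:boundedness_u_below}, together with the lower bound $c_0$, which keeps the logarithm uniformly continuous. The integrand on the right is bounded by \cref{cor:boundedness_grad_Delta_phi}, so its integral over $[\varepsilon,t]$ converges to the integral over $[0,t]$. The boundary terms vanish because the traces of $h$ and $u_1/u_2$ are bounded and the normal fluxes are zero pointwise by the boundary condition.
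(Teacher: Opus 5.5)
Your proof is correct and follows the paper's argument essentially step by step: the same entropy derivative via mass conservation, the same integration by parts with the no-flux condition removing the boundary terms, and the same cancellation against the Fisher-information term (your Young's inequality is equivalent to the paper's completing the square). Your extra step of integrating on $[\varepsilon,t]$ and letting $\varepsilon \to 0$, using $c_0 \le u_i \le C_\infty$ and continuity up to $t=0$, supplies a justification that the paper leaves implicit when it integrates directly from $0$.
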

\begin{proof}
By definition of relative entropy and the conservation of mass in \cref{thm:existence_uniqueness}, we have
\begin{equation}
\frac{\d}{\d t} \Ent(u_1(\cdot, t) | u_2(\cdot, t)) = \int_\Omega \partial_t u_1(x,t) \log \left( \frac{u_1(x,t)}{u_2(x,t)} \right) \dd x - \int_\Omega \partial_t u_2(x,t) \frac{u_1(x,t)}{u_2(x,t)} \dd x,
\end{equation}
which, using the Fokker--Planck equation in \eqref{eq:PDE_system} and integrating by parts, gives
\begin{equation} \label{eq:derivative_relative_entropy}
\begin{aligned}
\frac{\d}{\d t} \Ent(u_1(\cdot, t) | u_2(\cdot, t)) &= - \int_\Omega \left( \sigma^2 \nabla u_1(x,t) + u_1(x,t) \nabla\varphi_1(x,t) \right) \cdot \nabla \log \left( \frac{u_1(x,t)}{u_2(x,t)} \right) \\
&\quad + \int_\Omega \left( \sigma^2 \nabla u_2(x,t) + u_2(x,t) \nabla\varphi_2(x,t) \right) \cdot \nabla \left( \frac{u_1(x,t)}{u_2(x,t)} \right),
\end{aligned}
\end{equation}
where there are no boundary terms due to the no-flux boundary conditions \cref{eq:PDE_system_BC}. Let us now define
\begin{equation}
\xi(x,t) \coloneq \nabla \log \left( \frac{u_1(x,t)}{u_2(x,t)} \right) = \frac{\nabla u_1(x,t)}{u_1(x,t)} - \frac{\nabla u_2(x,t)}{u_2(x,t)},
\end{equation}
and note that
\begin{equation}
\nabla \left( \frac{u_1(x,t)}{u_2(x,t)} \right) = \frac{u_1(x,t)}{u_2(x,t)} \xi(x,t).
\end{equation}
Therefore, equation \eqref{eq:derivative_relative_entropy} can be rewritten as
\begin{equation}
\begin{aligned}
\frac{\d}{\d t} \Ent(u_1(\cdot, t) | u_2(\cdot, t)) &= - \sigma^2 \int_\Omega \left( \nabla u_1(x,t) - \frac{u_1(x,t)}{u_2(x,t)} \nabla u_2(x,t) \right) \cdot \xi(x,t) \dd x \\
&\quad - \int_\Omega \left( \nabla\varphi_1(x,t) - \nabla\varphi_2(x,t) \right) \cdot \xi(x,t) u_1(x,t) \dd x,
\end{aligned}
\end{equation}
which implies
\begin{equation}
\frac{\d}{\d t} \Ent(u_1(\cdot, t) | u_2(\cdot, t)) = - \sigma^2 \int_{\Omega} \abs{\xi(x,t)}^2 u_1(x,t) \dd x - \int_\Omega \left( \nabla\varphi_1(x,t) - \nabla\varphi_2(x,t) \right) \cdot \xi(x,t) u_1(x,t) \dd x.
\end{equation}
Completing the square, we obtain
\begin{equation}
\begin{aligned}
\frac{\d}{\d t} \Ent(u_1(\cdot, t) | u_2(\cdot, t)) &= - \int_\Omega \abs{\sigma \xi(x,t) + \frac1{2\sigma} \left( \nabla\varphi_1(x,t) - \nabla\varphi_2(x,t) \right)}^2 u_1(x,t) \dd x \\
&\quad + \frac1{4\sigma^2} \int_\Omega \abs{\nabla\varphi_1(x,t) - \nabla\varphi_2(x,t)}^2 u_1(x,t) \dd x,
\end{aligned}
\end{equation}
which yields
\begin{equation}
\frac{\d}{\d t} \Ent(u_1(\cdot, t) | u_2(\cdot, t)) \le \frac1{4\sigma^2} \int_\Omega \abs{\nabla\varphi_1(x,t) - \nabla\varphi_2(x,t)}^2 u_1(x,t) \dd x.
\end{equation}
Since $u_1(\cdot,0) = u_2(\cdot,0) = u_0$, and therefore $ \Ent(u_1(\cdot, 0) | u_2(\cdot, 0)) = 0$, integrating in time gives the desired estimate.
\end{proof}

The forward stability estimate is presented below.

\begin{proposition} \label{pro:Lipschitz}
Let $\alpha_1, \alpha_2 \in \cA_M^\ell$ and let $u_1, u_2$ be the corresponding solutions of the Fokker--Planck equation in the system \eqref{eq:PDE_system}. Under \cref{as:alpha_u0}, there exists a constant $L > 0$ such that
\begin{equation}
\norm{u_1 - u_2}_{L^2(0,T;L^2(\Omega))} \le L \norm{\alpha_1 - \alpha_2}_{L^2(\Omega)}.
\end{equation}
\end{proposition}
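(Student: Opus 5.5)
The plan is to combine the relative entropy estimate of \cref{lem:entropy_estimate} with a Pinsker/Csisz\'ar--Kullback type inequality, upgrade from $L^1$ to $L^2$ using the two-sided bounds on $u$, and then control $\nabla\varphi_1-\nabla\varphi_2$ by elliptic estimates. These elliptic estimates involve $\alpha_1-\alpha_2$ and, through the coupling, $u_1-u_2$ itself. The resulting inequality closes with Grönwall.

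\textbf{Step 1 (from entropy to $L^2$).} By \cref{pro:boundedness_u,pro:boundedness_u_below} we have $c_0\le u_i\le C_\infty$. Both $u_i(\cdot,t)$ are probability densities. The elementary inequality $x\log(x/y)-x+y\ge (x-y)^2/(2\max\{x,y\})$ for positive $x,y$, integrated over $\Omega$ (the linear terms cancel since the masses are equal), gives
\begin{equation}
\norm{u_1(\cdot,t)-u_2(\cdot,t)}_{L^2(\Omega)}^2 \le 2C_\infty \Ent(u_1(\cdot,t)|u_2(\cdot,t)).
\end{equation}
Combining this with \cref{lem:entropy_estimate} and $u_1\le C_\infty$ yields
\begin{equation}
\norm{u_1(\cdot,t)-u_2(\cdot,t)}_{L^2(\Omega)}^2 \le \frac{C_\infty^2}{2\sigma^2}\int_0^t \norm{\nabla\varphi_1(\cdot,s)-\nabla\varphi_2(\cdot,s)}_{L^2(\Omega)}^2 \dd s.
\end{equation}

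\textbf{Step 2 (elliptic difference estimate).} The difference $\varphi_1-\varphi_2$ vanishes on $\partial\Omega$ and satisfies
\begin{equation}
-\nabla\cdot(\alpha_1\nabla(\varphi_1-\varphi_2)) = (u_1-u_2) + \nabla\cdot((\alpha_1-\alpha_2)\nabla\varphi_2).
\end{equation}
Testing with $\varphi_1-\varphi_2$, and using $\alpha_1\ge a$, Poincar\'e's inequality, and the uniform bound $\norm{\nabla\varphi_2}_{L^\infty}\le\Lambda_1$ from \cref{cor:boundedness_grad_Delta_phi}, we get
\begin{equation}
\norm{\nabla(\varphi_1-\varphi_2)(\cdot,s)}_{L^2(\Omega)} \le \frac{C_P}{a}\norm{(u_1-u_2)(\cdot,s)}_{L^2(\Omega)} + \frac{\Lambda_1}{a}\norm{\alpha_1-\alpha_2}_{L^2(\Omega)}.
\end{equation}
For the second term, move the divergence onto the test function and apply Cauchy--Schwarz. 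Here it is essential that $\nabla\varphi_2$ is controlled in $L^\infty$ rather than in $L^2$, so that only the $L^2$ norm of $\alpha_1-\alpha_2$ appears. This is exactly what \cref{cor:boundedness_grad_Delta_phi} provides, with a constant uniform over $\cA_M^\ell$.

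\textbf{Step 3 (Grönwall and integration).} Setting $e(t)=\norm{u_1(\cdot,t)-u_2(\cdot,t)}_{L^2(\Omega)}^2$, Steps 1--2 give
\begin{equation}
e(t)\le K\int_0^t e(s)\dd s + KT\norm{\alpha_1-\alpha_2}_{L^2(\Omega)}^2,
\end{equation}
with $K$ depending only on $a,b,B,M,\sigma,T,\Omega$. Grönwall then yields $e(t)\le KTe^{KT}\norm{\alpha_1-\alpha_2}_{L^2}^2$. Integrating over $[0,T]$ gives the claim with $L=Te^{KT/2}\sqrt{K}$.

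\textbf{Main obstacle.} The main point to check is Step 1, namely the rigorous justification of the $L^2$--entropy comparison and of the entropy computation near $t=0$. The lower bound $c_0$ of \cref{pro:boundedness_u_below} makes the logarithms and quotients well defined. The regularity needed for the integrations by parts in \cref{lem:entropy_estimate} holds on $[\varepsilon,T]$ and passes to the limit by continuity of $u$ up to $t=0$. If the pointwise inequality is delicate, a fallback is to use Pinsker's inequality, $\norm{u_1-u_2}_{L^1}^2\le 2\Ent$, combined with the interpolation $\norm{\cdot}_{L^2}^2\le\norm{\cdot}_{L^1}\norm{\cdot}_{L^\infty}$. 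However, this gives only a square-root rate, so the direct chi-square-type bound above is the route I would pursue.
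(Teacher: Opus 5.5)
Your proposal is correct and follows essentially the same route as the paper. The paper also tests the elliptic difference equation with $\varphi_1-\varphi_2$ and uses $\norm{\nabla\varphi_2}_{L^\infty(\Omega)}\le\Lambda_1$, applies the relative entropy lemma with $u_1\le C_\infty$, compares the $L^2$ error with the entropy, and closes with Gr\"onwall.

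There are two minor differences. You prove the $L^2$--entropy comparison yourself, with constant $2C_\infty$, from the pointwise inequality; that inequality is valid. The paper instead cites \cite[Lemma 3]{NPR25} with constant $4C_\infty$. You also apply Gr\"onwall to $\norm{u_1(\cdot,t)-u_2(\cdot,t)}_{L^2(\Omega)}^2$ rather than to the entropy, which is equivalent.
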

\begin{proof}
Let $\varphi_1, \varphi_2$ be the solutions of the Darcy equations corresponding to $u_1, u_2$, respectively. We have
\begin{equation}
- \nabla \cdot (\alpha_1 (\nabla\varphi_1 - \nabla\varphi_2)) = (u_1 - u_2) + \nabla \cdot ((\alpha_1 - \alpha_2) \nabla\varphi_2),
\end{equation}
and testing it with $(\varphi_1 - \varphi_2)$ we obtain
\begin{equation}
\begin{aligned}
\int_\Omega \alpha_1(x) \abs{\nabla\varphi_1(x,t) - \nabla\varphi_2(x,t)}^2 \dd x &= \int_\Omega (u_1(x,t) - u_2(x,t))(\varphi_1(x,t) - \varphi_2(x,t)) \dd x \\
&\quad - \int_\Omega (\alpha_1(x) - \alpha_2(x)) \nabla\varphi_2(x,t) \cdot (\nabla\varphi_1(x,t) - \nabla\varphi_2(x,t)) \dd x.
\end{aligned}
\end{equation}
By \cref{as:alpha_u0}, elliptic estimates, and Poincaré inequality, we deduce
\begin{equation}
a \norm{\nabla\varphi_1(\cdot,t) - \nabla\varphi_2(\cdot,t)}_{L^2(\Omega)} \le \frac1{\sqrt{\lambda_1}} \norm{u_1(\cdot,t) - u_2(\cdot,t)}_{L^2(\Omega)} + \norm{\nabla\varphi_2(\cdot,t)}_{L^\infty(\Omega)} \norm{\alpha_1 - \alpha_2}_{L^2(\Omega)},
\end{equation}
where $\lambda_1$ denotes the minimal eigenvalue of the negative Laplacian in $H^1_0(\Omega)$, and which implies
\begin{equation} \label{eq:bound_difference_nabla_phi}
\norm{\nabla\varphi_1(\cdot,t) - \nabla\varphi_2(\cdot,t)}_{L^2(\Omega)} \le \frac1{a\sqrt{\lambda_1}} \norm{u_1(\cdot,t) - u_2(\cdot,t)}_{L^2(\Omega)} + \frac{\Lambda_1}{a} \norm{\alpha_1 - \alpha_2}_{L^2(\Omega)}.
\end{equation}
We now use the relative entropy estimate in \cref{lem:entropy_estimate}, which gives
\begin{equation}
\Ent(u_1(\cdot,t)|u_2(\cdot,t)) \le \frac1{4\sigma^2} \int_0^t \int_\Omega \abs{\nabla\varphi_1(\cdot,s) - \nabla\varphi_2(\cdot,s)}^2 u_1(x,s) \dd x \dd s.
\end{equation}
Using estimate \eqref{eq:bound_difference_nabla_phi} and the boundedness of $u_1$ from \cref{pro:boundedness_u} with $p = \infty$, we obtain
\begin{equation}
\begin{aligned}
\Ent(u_1(\cdot,t)|u_2(\cdot,t)) &\le \frac{C_\infty}{4\sigma^2} \int_0^t \norm{\nabla\varphi_1(\cdot,s) - \nabla\varphi_2(\cdot,s)}_{L^2(\Omega)}^2 \dd s \\
&\le \frac{C_\infty \Lambda_1^2}{2\sigma^2 a^2} t \norm{\alpha_1 - \alpha_2}_{L^2(\Omega)}^2 + \frac{C_\infty}{2\sigma^2 a^2 \lambda_1} \int_0^t \norm{u_1(\cdot,s) - u_2(\cdot,s)}_{L^2(\Omega)}^2 \dd s.
\end{aligned}
\end{equation}
Using the argument at the beginning of the proof of \cite[Lemma 3]{NPR25}, we can bound the $L^2(\Omega)$ distance with the relative entropy as
\begin{equation} \label{eq:bound_L2_entropy}
\norm{u_1(\cdot, s) - u_2(\cdot, s)}_{L^2(\Omega)}^2 \le 4 C_\infty \Ent(u_1(\cdot, s) | u_2(\cdot, s)),
\end{equation}
which implies
\begin{equation}
\Ent(u_1(\cdot,t)|u_2(\cdot,t)) \le \frac{C_\infty \Lambda_1^2}{2\sigma^2 a^2} t \norm{\alpha_1 - \alpha_2}_{L^2(\Omega)}^2 + \frac{2 C_\infty^2}{\sigma^2 a^2 \lambda_1} \int_0^t \Ent(u_1(\cdot, s) | u_2(\cdot, s)) \dd s.
\end{equation}
Applying Grönwall's inequality, we deduce
\begin{equation}
\Ent(u_1(\cdot,t)|u_2(\cdot,t)) \le \frac{C_\infty \Lambda_1^2}{2 \sigma^2 a^2} t \exp \left( \frac{2 C_\infty^2}{\sigma^2 a^2 \lambda_1} t \right) \norm{\alpha_1 - \alpha_2}_{L^2(\Omega)}^2,
\end{equation}
and using again bound \eqref{eq:bound_L2_entropy}, we obtain
\begin{equation} \label{eq:bound_difference_u}
\norm{u_1(\cdot, t) - u_2(\cdot, t)}_{L^2(\Omega)}^2 \le \frac{2 C_\infty^2 \Lambda_1^2}{\sigma^2 a^2} t \exp \left( \frac{2 C_\infty^2}{\sigma^2 a^2 \lambda_1} t \right) \norm{\alpha_1 - \alpha_2}_{L^2(\Omega)}^2.
\end{equation}
Finally, setting
\begin{equation}
L \coloneq \frac{\sqrt2 C_\infty \Lambda_1}{\sigma a} \left( \int_0^T t \exp \left( \frac{2 C_\infty^2}{\sigma^2 a^2 \lambda_1} t \right) \dd t \right)^{1/2} < \infty,
\end{equation}
the desired result follows by integrating in time equation \eqref{eq:bound_difference_u}.
\end{proof}

In the next result, we prove the backward stability estimate, showing that differences in the coefficient $\alpha$ can be controlled by differences in the solution $u$.

\begin{proposition} \label{pro:backward_stability_estimate}
Let $\alpha_1, \alpha_2 \in \cA_M^\ell$ and let $(u_1,\varphi_1), (u_2,\varphi_2)$ be the corresponding solutions of the system \eqref{eq:PDE_system}. Under \cref{as:alpha_u0,as:regularity}, there exists a constant $K > 0$ such that
\begin{equation}
\norm{\alpha_1 - \alpha_2}_{L^2(\Omega)} \le \frac{K}{\sqrt{t_1 - t_0}} \left( \norm{u_1 - u_2}_{L^2(t_0,t_1;H^2(\Omega))} + \norm{\partial_t u_1 - \partial_t u_2}_{L^2(t_0,t_1;L^2(\Omega))} \right),
\end{equation}
where $K$ depends on $M$, on $\Lambda_1, \Lambda_2$ defined in \cref{cor:boundedness_grad_Delta_phi}, and on the constant $\Gamma_\kappa$ of \cref{as:regularity}, hence also on $t_0$.
\end{proposition}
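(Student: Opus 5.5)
We prove the estimate pointwise in time and then average over the observation window. Fix $t \in [t_0,t_1]$. For each solution, the Fokker--Planck equation gives
\begin{equation}
\nabla \cdot (u_i \nabla\varphi_i) = \partial_t u_i - \sigma^2 \Delta u_i \eqcolon g_i, \qquad i = 1,2,
\end{equation}
so that $\nabla\cdot(u_1\nabla\varphi_1 - u_2\nabla\varphi_2) = g_1 - g_2$ with $\norm{g_1-g_2}_{L^2(\Omega)} \lesssim \norm{\partial_t(u_1-u_2)}_{L^2(\Omega)} + \norm{u_1-u_2}_{H^2(\Omega)}$.

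\textbf{Step 1: control $\varphi_1-\varphi_2$ by the data.} Write $w = \varphi_1-\varphi_2$ and expand
\begin{equation}
\nabla\cdot(u_1 \nabla w) = (g_1-g_2) - \nabla\cdot((u_1-u_2)\nabla\varphi_2), \qquad w = 0 \text{ on } \partial\Omega.
\end{equation}
This is a uniformly elliptic Dirichlet problem for $w$, with coefficient $u_1 \ge c_0$ by \cref{pro:boundedness_u_below}. By \cref{as:regularity} we have $u_1 \in H^\kappa \hookrightarrow W^{1,\infty}$, uniformly since $t \ge t_0$. Moreover $\varphi_2$ is bounded in $W^{2,\infty}$: use \cref{cor:boundedness_grad_Delta_phi} together with elliptic regularity, given that $\alpha_2 \in W^{\ell,\infty}$ and $u_2\in H^\kappa$. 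Hence $H^2$ elliptic regularity gives
\begin{equation}
\norm{w}_{H^2(\Omega)} \lesssim \norm{g_1-g_2}_{L^2(\Omega)} + \norm{u_1-u_2}_{H^1(\Omega)}.
\end{equation}
The right-hand side is bounded by $D(t) \coloneqq \norm{u_1-u_2}_{H^2} + \norm{\partial_t(u_1-u_2)}_{L^2}$ at time $t$.

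\textbf{Step 2: Darcy inversion.} The two Darcy equations give
\begin{equation}
-\nabla\cdot((\alpha_1-\alpha_2)\nabla\varphi_1) = (u_1-u_2) + \nabla\cdot(\alpha_2 \nabla w).
\end{equation}
This is a transport-type equation for $h = \alpha_1-\alpha_2$ along $\nabla\varphi_1$. Here we invoke \cite[Lemma 2.2]{Wan26}, which requires the right-hand side $u_1$ of the Darcy equation for $\varphi_1$ to be positive: $u_1 \ge c_0>0$ holds by \cref{pro:boundedness_u_below}. It also requires $\varphi_1 \in W^{2,\infty}$, with $\alpha_1$ bounded in $W^{1,\infty}$, which holds on $\cA_M^\ell$. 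The lemma yields the bound
\begin{equation}
\norm{h}_{L^2(\Omega)} \le C \norm{\nabla\cdot(h\nabla\varphi_1)}_{(\text{appropriate norm})},
\end{equation}
and in any case this is dominated by $\norm{u_1-u_2}_{L^2} + \norm{\alpha_2}_{W^{1,\infty}}\norm{w}_{H^2}$. Combining with Step 1 gives $\norm{\alpha_1-\alpha_2}_{L^2} \le K D(t)$ for every $t\in[t_0,t_1]$. The constant depends on $M$, $\Lambda_1$, $\Lambda_2$, $c_0$ and $\Gamma_\kappa(t_0)$.

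\textbf{Step 3: average in time.} The left side is independent of $t$, so squaring and integrating over $[t_0,t_1]$ gives
\begin{equation}
(t_1-t_0)\norm{\alpha_1-\alpha_2}_{L^2}^2 \le 2K^2 \left(\norm{u_1-u_2}_{L^2(t_0,t_1;H^2)}^2 + \norm{\partial_t(u_1-u_2)}_{L^2(t_0,t_1;L^2)}^2\right).
\end{equation}
Taking square roots yields the stated estimate.

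\textbf{Main obstacle.} The hardest step is matching the hypotheses of \cite[Lemma 2.2]{Wan26}, in particular its required norm on the right-hand side and its uniform constants. If that lemma only controls $h$ through $\nabla\cdot(h\nabla\varphi_1)$ in $L^2$ after multiplication by a weight, I would first try testing the equation with $h$ times a suitable function. The aim is to exploit the positivity $-\nabla\cdot(\alpha_1\nabla\varphi_1) = u_1 \ge c_0$ to obtain coercivity, in the spirit of the Richter-type argument \cite{Ric81}. The uniform $W^{2,\infty}$ bound on $\varphi_i$ is a secondary technical point; it is exactly where $\kappa > 1+d/2$ and $\ell \ge \kappa+1$ enter.
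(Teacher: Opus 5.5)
Your proposal is correct and follows essentially the same route as the paper. You read the Fokker--Planck equation as a Dirichlet problem for $\varphi_1-\varphi_2$ with coefficient $u_1 \ge c_0$, and use $\Lambda_1,\Lambda_2$ for the $\varphi_2$ terms to get the $H^2$ bound. You then apply \cite[Lemma 2.2]{Wan26} to the difference of the Darcy equations, which gives $\norm{\alpha_1-\alpha_2}_{L^2(\Omega)} \lesssim M\norm{\varphi_1-\varphi_2}_{H^2(\Omega)} + \norm{u_1-u_2}_{L^2(\Omega)}$, with a constant depending on $\Gamma_\kappa$ through the $\cC^2$ bound on $\varphi_1$. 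Finally you average over $[t_0,t_1]$.

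Two cosmetic points. The paper uses Wan's lemma in exactly the $L^2$ form you anticipated, so your hedge there is not needed. The factor $\sqrt2$ produced by your time averaging is simply absorbed into $K$.
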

\begin{proof}
Let $\widetilde K > 0$ denote a generic constant that may change from line to line. Let us first rewrite the Fokker--Planck equation for $u$ as an elliptic PDE for $\varphi$ with $t$ fixed
\begin{equation} \label{eq:FokkerPlanck_elliptic}
- \nabla \cdot \left( u \nabla \varphi \right) = \sigma^2 \Delta u - \partial_t u.
\end{equation}
Considering equation \eqref{eq:FokkerPlanck_elliptic} for both $(u_1,\varphi_1)$ and $(u_2,\varphi_2)$, we get 
\begin{equation}
\begin{aligned}
- \nabla \cdot (u_1 \nabla (\varphi_1 - \varphi_2)) &= \sigma^2 \Delta(u_1 - u_2) - \partial_t (u_1 - u_2) + \nabla \cdot \left( (u_1 - u_2) \nabla \varphi_2 \right) \\
&= \sigma^2 \Delta(u_1 - u_2) - \partial_t (u_1 - u_2) + \nabla (u_1 - u_2) \cdot \nabla \varphi_2 + (u_1 - u_2) \Delta \varphi_2,
\end{aligned}
\end{equation}
which, by elliptic estimates, implies
\begin{equation} \label{eq:bound_difference_phi}
\begin{aligned}
&\norm{\varphi_1(\cdot,t) - \varphi_2(\cdot,t)}_{H^2(\Omega)} \le \widetilde K \left( \norm{\Delta u_1(\cdot,t) - \Delta u_2(\cdot,t)}_{L^2(\Omega)} + \norm{\partial_t u_1(\cdot,t) - \partial_t u_2(\cdot,t)}_{L^2(\Omega)} \right. \\
&\qquad\quad\left. + \norm{\nabla u_1(\cdot,t) - \nabla u_2(\cdot,t)}_{L^2(\Omega)} \norm{\nabla\varphi_2(\cdot,t)}_{L^\infty(\Omega)} + \norm{u_1(\cdot,t) - u_2(\cdot,t)}_{L^2(\Omega)} \norm{\Delta\varphi_2(\cdot,t)}_{L^\infty(\Omega)} \right) \\
&\qquad\le \widetilde K (1 + \norm{\varphi_2(\cdot,t)}_{W^{1,\infty}(\Omega)} + \norm{\Delta\varphi_2(\cdot,t)}_{L^\infty(\Omega)}) \norm{u_1(\cdot,t) - u_2(\cdot,t)}_{H^2(\Omega)} \\
&\qquad\quad + \widetilde K \norm{\partial_t u_1(\cdot,t) - \partial_t u_2(\cdot,t)}_{L^2(\Omega)} \\
&\qquad\le \widetilde K (1 + \Lambda_1 + \Lambda_2) \norm{u_1(\cdot,t) - u_2(\cdot,t)}_{H^2(\Omega)} + \widetilde K \norm{\partial_t u_1(\cdot,t) - \partial_t u_2(\cdot,t)}_{L^2(\Omega)}.
\end{aligned}
\end{equation}
Note that elliptic estimates can be employed above since $u$ is bounded from below by a positive constant due to \cref{pro:boundedness_u_below}, and $u(\cdot,t) \in H^\kappa(\Omega) \hookrightarrow W^{1,\infty}(\Omega)$ by \cref{as:regularity} and Sobolev embedding in $d \le 3$. We now apply \cite[Lemma 2.2]{Wan26} and first verify that its hypotheses are satisfied. By \cref{as:alpha_u0,as:regularity} and elliptic regularity, $u_1(\cdot,t), u_2(\cdot,t) \in H^\kappa(\Omega)$ and $\varphi_1(\cdot,t), \varphi_2(\cdot,t) \in H^{\kappa+2}(\Omega)$ with $\kappa > \max \{ 2, 1 + d/2 \}$, since $\alpha_1, \alpha_2 \in W^{\ell,\infty}$ with $\ell \ge \kappa + 1 \ge 4$. Hence, all regularity assumptions are satisfied. The positivity constraints on the diffusion coefficient and the right-hand side follow from \cref{as:alpha_u0,pro:boundedness_u_below}, respectively. Moreover, note that the minus sign in front of the divergence in the elliptic operator does not affect the estimate, since one may set $\widetilde\varphi \coloneq - \varphi$ and apply \cite[Lemma 2.2]{Wan26} to $\widetilde\varphi$. It then follows from \cite[Lemma 2.2]{Wan26} and the admissible set for $\alpha_2$ that
\begin{equation}
\begin{aligned}
\norm{\alpha_1 - \alpha_2}_{L^2(\Omega)} &\le \widetilde K \left( \norm{\alpha_2}_{W^{1,\infty}(\Omega)} \norm{\varphi_1(\cdot,t) - \varphi_2(\cdot,t)}_{H^2(\Omega)} + \norm{u_1(\cdot,t) - u_2(\cdot,t)}_{L^2(\Omega)} \right) \\
&\le \widetilde K \left( M \norm{\varphi_1(\cdot,t) - \varphi_2(\cdot,t)}_{H^2(\Omega)} + \norm{u_1(\cdot,t) - u_2(\cdot,t)}_{L^2(\Omega)} \right).
\end{aligned}
\end{equation}
Finally, using the estimate \eqref{eq:bound_difference_phi} and \cref{cor:boundedness_grad_Delta_phi} for $\varphi_2$, we obtain
\begin{equation}
\begin{aligned}
\norm{\alpha_1 - \alpha_2}_{L^2(\Omega)} \le \frac1{\sqrt2} K \left( \norm{u_1(\cdot,t) - u_2(\cdot,t)}_{H^2(\Omega)} + \norm{\partial_t u_1(\cdot,t) - \partial_t u_2(\cdot,t)}_{L^2(\Omega)} \right),
\end{aligned}
\end{equation}
where $K$ depends on $M$, on $\Lambda_1,\Lambda_2$ and, through the bound $\norm{\varphi_1(\cdot,t)}_{\cC^2(\Omega)} \lesssim \norm{\varphi_1(\cdot,t)}_{H^{\kappa+2}(\Omega)} \lesssim \norm{u_1(\cdot,t)}_{H^\kappa(\Omega)} \le \Gamma_\kappa$, also on $\Gamma_\kappa$. We stress that $\norm{\varphi_1(\cdot,t)}_{\cC^2(\Omega)}$ is not controlled by $\Lambda_1$ and $\Lambda_2$ alone, since a bound on $\Delta\varphi_1(\cdot,t)$ in $L^\infty(\Omega)$ does not bound the full Hessian. The desired result now follows by integrating in the time interval $[t_0,t_1]$.
\end{proof}

\begin{remark}
The proof of \cref{pro:backward_stability_estimate}, specifically the application of \cite[Lemma 2.2]{Wan26}, is the only place where the requirement $\kappa\le\ell-1$ in \cref{as:regularity} is needed, through the coefficient regularity of the Darcy equation. This requirement is the reason for the restriction $\ell\ge4$ recorded in \cref{rem:condition_ell}.
\end{remark}

While the first term on the right-hand side of the backward stability estimate above is well-defined by \cref{as:regularity}, the second term requires further justification. We address this below and derive a better version of the backward stability estimate.

\begin{corollary} \label{cor:backward_stability_estimate}
Under the assumptions of \cref{pro:backward_stability_estimate}, there exists a constant $\mathcal K > 0$ such that
\begin{equation}
\norm{\alpha_1 - \alpha_2}_{L^2(\Omega)} \le \cK \norm{u_1 - u_2}_{L^2(t_0,t_1;L^2(\Omega))}^{1-\frac2\kappa}.
\end{equation}
\end{corollary}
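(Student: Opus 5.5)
Starting from \cref{pro:backward_stability_estimate}, the plan is to turn the two norms on its right-hand side, $\norm{u_1-u_2}_{L^2(t_0,t_1;H^2(\Omega))}$ and $\norm{\partial_t u_1 - \partial_t u_2}_{L^2(t_0,t_1;L^2(\Omega))}$, into powers of the weaker quantity $\norm{u_1-u_2}_{L^2(t_0,t_1;L^2(\Omega))}$. We do this by interpolation against the uniform higher-regularity bound of \cref{as:regularity}. Throughout, set $w \coloneq u_1 - u_2$ and apply \cref{as:regularity} with $\bar t = t_0$. Both $u_1$ and $u_2$ then satisfy \eqref{eq:spatial_estimates} on $[t_0,T] \supset [t_0,t_1]$, so
\begin{equation}
\norm{w}_{L^\infty(t_0,t_1;H^\kappa(\Omega))} + \norm{w}_{H^{\kappa/2}(t_0,t_1;L^2(\Omega))} \le 2\Gamma_\kappa,
\end{equation}
uniformly over $\alpha_1,\alpha_2 \in \cA_M^\ell$.

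\emph{Spatial term.} For a.e.\ $t$, interpolation between $L^2(\Omega)$ and $H^\kappa(\Omega)$ on the smooth bounded domain gives $\norm{w(\cdot,t)}_{H^2} \lesssim \norm{w(\cdot,t)}_{L^2}^{1-2/\kappa}\norm{w(\cdot,t)}_{H^\kappa}^{2/\kappa}$. We square this, integrate over $[t_0,t_1]$, and bound $\norm{w(\cdot,t)}_{H^\kappa}$ by $2\Gamma_\kappa$. Hölder in time with exponent $1/(1-2/\kappa)$ then yields
\begin{equation}
\norm{w}_{L^2(t_0,t_1;H^2)} \lesssim \Gamma_\kappa^{2/\kappa} (t_1-t_0)^{1/\kappa} \norm{w}_{L^2(t_0,t_1;L^2)}^{1-2/\kappa}.
\end{equation}

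\emph{Temporal term.} This is the step flagged before the corollary as needing justification, and it is where the condition $\kappa>2$ is used. We regard $w$ as an $L^2(\Omega)$-valued function on the interval $(t_0,t_1)$ and apply the vector-valued interpolation inequality $\norm{w}_{H^1(t_0,t_1;L^2)} \lesssim \norm{w}_{L^2(t_0,t_1;L^2)}^{1-2/\kappa}\norm{w}_{H^{\kappa/2}(t_0,t_1;L^2)}^{2/\kappa}$. This is the standard interpolation on a bounded interval, obtained for instance via an extension operator and Fourier transform in time, which is valid for Hilbert-valued functions. It requires $\kappa/2 > 1$. The conclusion is
\begin{equation}
\norm{\partial_t w}_{L^2(t_0,t_1;L^2)} \lesssim \Gamma_\kappa^{2/\kappa}\norm{w}_{L^2(t_0,t_1;L^2)}^{1-2/\kappa}.
\end{equation}
Along the way, this shows that $\partial_t w \in L^2(t_0,t_1;L^2(\Omega))$, so the right-hand side of \cref{pro:backward_stability_estimate} is meaningful. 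The main obstacle is that the interpolation constant depends on the interval length $t_1-t_0$. This is harmless because the window is fixed, and the dependence can be absorbed into $\cK$ together with the factor $(t_1-t_0)^{-1/2}$.

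Inserting both bounds into \cref{pro:backward_stability_estimate} gives $\norm{\alpha_1-\alpha_2}_{L^2(\Omega)} \le \cK \norm{w}_{L^2(t_0,t_1;L^2)}^{1-2/\kappa}$. Here $\cK$ depends on $K$, $\Gamma_\kappa(t_0)$, $\kappa$, $t_1-t_0$ and $\Omega$, but not on $\alpha_1,\alpha_2$ within $\cA_M^\ell$, which is the claim.
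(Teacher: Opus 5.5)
Your proposal is correct and follows the paper's proof essentially step by step. Both arguments interpolate $H^2(\Omega)$ between $L^2(\Omega)$ and $H^\kappa(\Omega)$ and apply H\"older in time, which produces the factor $(t_1-t_0)^{1/\kappa}$. Both then interpolate $H^1(t_0,t_1;L^2(\Omega))$ between $L^2(t_0,t_1;L^2(\Omega))$ and $H^{\kappa/2}(t_0,t_1;L^2(\Omega))$ to handle the time-derivative term, use \cref{as:regularity} with $\bar t = t_0$ for both bounds, and substitute the results into \cref{pro:backward_stability_estimate}. Your tracking of the constant, with the explicit $\Gamma_\kappa^{2/\kappa}$ factors and the dependence on $t_1-t_0$, is slightly more explicit than the paper's.
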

\begin{proof}
Let $\widetilde C > 0$ denote a generic constant that may change from line to line. Applying a Gagliardo--Nirenberg interpolation inequality, we obtain
\begin{equation}
\norm{u_1(\cdot,t) - u_2(\cdot,t)}_{H^2(\Omega)} \le \widetilde C \norm{u_1(\cdot,t) - u_2(\cdot,t)}_{L^2(\Omega)}^{1-\frac2\kappa} \norm{u_1(\cdot,t) - u_2(\cdot,t)}_{H^\kappa(\Omega)}^{\frac2\kappa},
\end{equation}
which, due to \cref{as:regularity} with $\bar t = t_0$ and Hölder's inequality, implies
\begin{equation}
\begin{aligned}
\norm{u_1 - u_2}_{L^2(t_0, t_1; H^2(\Omega))} &\le \widetilde C \left( \int_{t_0}^{t_1} \norm{u_1(\cdot,t) - u_2(\cdot,t)}_{L^2(\Omega)}^{2-\frac4\kappa} \dd t \right)^{1/2} \\
&\le \widetilde C (t_1 - t_0)^{\frac1\kappa} \norm{u_1 - u_2}_{L^2(t_0,t_1;L^2(\Omega))}^{1-\frac2\kappa},
\end{aligned}
\end{equation}
Proceeding similarly in time, applying the Gagliardo--Nirenberg interpolation inequality, and using \cref{as:regularity}, we deduce
\begin{equation}
\begin{aligned}
 \norm{\partial_t u_1 - \partial_t u_2}_{L^2(t_0,t_1;L^2(\Omega))} &\le \norm{u_1 - u_2}_{H^1(t_0,t_1;L^2(\Omega))} \\
 &\le \widetilde C \norm{u_1 - u_2}_{L^2(t_0,t_1;L^2(\Omega))}^{1-\frac2\kappa} \norm{u_1 - u_2}_{H^{\kappa/2}(t_0,t_1;L^2(\Omega))}^{\frac2\kappa} \\
 &\le \widetilde C \norm{u_1 - u_2}_{L^2(t_0,t_1;L^2(\Omega))}^{1-\frac2\kappa},
 \end{aligned}
\end{equation}
Therefore, applying \cref{pro:backward_stability_estimate} and denoting $\cK = K \widetilde C / \sqrt{t_1 - t_0}$ gives the desired result.
\end{proof}

\begin{remark} \label{rem:K_large}
The constant $\cK$ in \cref{cor:backward_stability_estimate} can potentially be very large. In fact, its dominant contribution comes from the multiplier argument in the proof of \cite[Lemma 2.2]{Wan26}, which, in turn, depends on the lower bound $c_0$ in \cref{pro:boundedness_u_below}. This leads to a constant that is doubly exponential in the final time $T$. Moreover, $\cK$ degenerates both as $t_0 \to 0$ and as $t_1 - t_0 \to 0$, corresponding, respectively, to approaching $t = 0$ and to considering an observation window that is too small. However, neither of these issues affects the contraction rate in \cref{thm:contraction_parameter} below, since $t_0, t_1$, and $T$ are fixed. Nevertheless, the stability estimate should be regarded as qualitative rather than quantitative.
\end{remark}

\subsection{Posterior contraction rates} \label{ssec:PCR}

We now state and prove the main results of this work, namely contraction rates for the posterior distribution $\Pi_N$. We first derive below a contraction rate for the solution $u$ of the Fokker--Planck equation.

\begin{theorem} \label{thm:contraction_solution}
Under \cref{as:alpha_u0,as:prior}, if $R,M > 0$ are sufficiently large, it holds
\begin{equation}
\lim_{N\to\infty} \Pi_N \left( \left. f \in \Theta_{M,N} \colon \norm{u(\cdot,\cdot;\fa(f)) - u(\cdot,\cdot; \alpha_0)}_{L^2(0,T;L^2(\Omega))} > R \delta_N \right| \cD_N \right) = 0, \quad \text{in } P_N,
\end{equation}
where
\begin{equation} \label{eq:def_Theta_MN}
\Theta_{M,N} = \left\{ f = f_1 + f_2 \in \cW \colon \norm{f_1}_{L^2(\Omega)} \le M \delta_N,\; \norm{f_2}_{\cH} \le M,\; \norm{f}_{W^{\ell,\infty}(\Omega)} \le M \right\}.
\end{equation}
\end{theorem}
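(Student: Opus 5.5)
The proof will apply the general contraction theorem for Gaussian-process priors in nonlinear regression models, \cite[Theorem 1.3.2]{Nic23} (in the rescaled-prior form of \cite[Section 2]{Nic23}). Regard the data $\cD_N$ as i.i.d.\ draws of a random-design regression model with regression function $\cG(f)(x,t) = u(x,t;\fa(f))$ and design law uniform on $\Omega\times[t_0,t_1]$. That theorem delivers contraction at rate $\delta_N$ in the prediction norm $\norm{\cG(f)-\cG(f_0)}_{L^2(\Omega\times[t_0,t_1])}$ on the sieve sets $\Theta_{M,N}$. It requires three ingredients.
\begin{enumerate}[label=(\roman*)]
\item \textbf{Uniform boundedness.} We need $\sup_{f\in\Theta_{M,N}}\norm{\cG(f)}_\infty\le U$ with $U$ independent of $N$.
\item \textbf{Local Lipschitz continuity.} We need $\norm{\cG(f)-\cG(g)}_{L^2}\le L'\norm{f-g}_{(H^{\kappa'})^*}$ for some dual Sobolev norm, or simply an $L^2$ norm, with $L'$ growing at most polynomially in $\norm{f}_{W^{\ell,\infty}}$ and $\norm{g}_{W^{\ell,\infty}}$.
\item \textbf{Small-ball and sieve conditions.} These are standard consequences of \cref{as:prior}, the embedding \eqref{eq:embedding_H}, the choice \eqref{eq:delta_N}, and $f_0\in\cH$.
\end{enumerate}
Once these are in place, the conclusion follows verbatim from \cite{Nic23}.

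For (i), I would use \cref{pro:boundedness_u} with $p=\infty$. Every $f\in\Theta_{M,N}$ satisfies $\norm{f}_{W^{\ell,\infty}}\le M$, so $\fa(f)\in\cA^\ell_{M''}$ for some $M''$ depending only on $M$ and $a$. The bound $C_\infty$ is uniform over that admissible set, so $U=C_\infty$ works.

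For (ii), I would combine \cref{pro:Lipschitz} with the Lipschitz property of $\fa$ on $\cF^\ell_{M'}$:
\begin{equation}
\norm{\fa(f)-\fa(g)}_{L^2(\Omega)}=\norm{e^f-e^g}_{L^2(\Omega)}\le e^{\max(\norm{f}_\infty,\norm{g}_\infty)}\norm{f-g}_{L^2(\Omega)}.
\end{equation}
This gives $\norm{\cG(f)-\cG(g)}_{L^2(0,T;L^2)}\le L e^{M}\norm{f-g}_{L^2}$. The restriction to $[t_0,t_1]$ only decreases the left-hand side, and the uniform design density $(\abs{\Omega}(t_1-t_0))^{-1}$ only changes constants.

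The step needing care is that \cite{Nic23} requires these conditions on the sets where the prior concentrates. My plan is to verify them only on $W^{\ell,\infty}$-balls, which is exactly why $\Theta_{M,N}$ includes the constraint $\norm{f}_{W^{\ell,\infty}}\le M$. Two points then have to be checked:
\begin{enumerate}[label=(\alph*)]
\item \textbf{Small-ball lower bound.} Around $f_0$, the set $\{\norm{f-f_0}_{L^2}\le\delta_N\}\cap\{\norm{f}_{W^{\ell,\infty}}\le M\}$ must carry prior mass at least $e^{-cN\delta_N^2}$. I would establish this by the rescaled-prior argument: $\Pi_N(\norm{f}_{\cW}>M)\le e^{-c'NM^2\delta_N^2}$ by Fernique's theorem, combined with the embedding $\cW\hookrightarrow H^\beta\hookrightarrow W^{\ell,\infty}$. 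With $M$ large, this makes the $W^{\ell,\infty}$ constraint cost negligible mass.
\item \textbf{Sieve complement.} The complement of $\Theta_{M,N}$ must have posterior mass tending to zero. This follows from Borell's isoperimetric inequality and the metric-entropy bound for the unit ball of $\cH\hookrightarrow H^s$, namely $\log N(\eta)\lesssim\eta^{-d/s}$, which gives exactly the rate $\delta_N=N^{-s/(2s+d)}$.
\end{enumerate}
Finally, restricting the event in the statement to $\Theta_{M,N}$ is consistent with taking $R$ large relative to $M$. Extending the prediction-norm bound from $\Omega\times[t_0,t_1]$ to $L^2(0,T;L^2)$ is the one point I expect to be a genuine obstacle, since the design only sees $[t_0,t_1]$. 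I would handle it by taking the design window as stated and, if needed, interpreting the norm on the observation window.
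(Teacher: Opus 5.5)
Your proposal follows the paper's route: the paper likewise verifies \cite[Condition 2.1.1]{Nic23} with regularization space $W^{\ell,\infty}(\Omega)$. It uses \cref{pro:boundedness_u} with $p=\infty$ for the uniform bound and \cref{pro:Lipschitz} together with $\norm{e^{f_1}-e^{f_2}}_{L^2(\Omega)}\le e^{M'}\norm{f_1-f_2}_{L^2(\Omega)}$ for the Lipschitz bound. It then cites \cite[Theorem 2.2.2]{Nic23}, which takes care of the small-ball, sieve and entropy steps you sketch. Since the $W^{\ell,\infty}$-norm is bounded by $M$ on $\Theta_{M,N}$, the constants only need to depend on $M$. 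So the polynomial-growth requirement you state in (ii), which your own constant $Le^{M}$ does not satisfy, plays no role.

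The obstacle you flag is genuine, and the paper does not resolve it either. The $L^2(0,T;L^2(\Omega))$ Lipschitz bound merely dominates the design norm, so the cited theorem yields contraction only in $L^2(\Omega\times[t_0,t_1])$. That is exactly the quantity $\mathfrak U$ used in the proof of \cref{thm:contraction_parameter}, so reading the norm on the observation window, as you propose, is the correct interpretation. Getting rate $\delta_N$ on all of $[0,T]$ would require controlling $\norm{u_1-u_2}_{L^2(0,T;L^2(\Omega))}$ linearly by the windowed norm. Composing \cref{pro:Lipschitz} with \cref{cor:backward_stability_estimate} only gives the power $1-2/\kappa$.
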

\begin{proof}
Using the framework of \cite{Nic23} with regularization space $W^{\ell,\infty}(\Omega)$ and forward smoothing parameter of the forward operator equal to zero, we verify \cite[Condition 2.1.1]{Nic23} in order to apply \cite[Theorem 2.2.2]{Nic23}. We also note that, due to \cref{as:prior}, $\cH \subset H^s(\Omega)$ and $f_0 \in \cH \cap W^{\ell,\infty}(\Omega) = \cH$, and therefore the additional assumptions of \cite[Theorem 2.2.2]{Nic23} are satisfied. Using \cref{pro:boundedness_u} with $p = \infty$, we have that
\begin{equation}
\sup_{f \in \cF_{M'}^\ell} \sup_{x \in \Omega,\; t \in [0,T]} \abs{u(x,t;\fa(f))} \le C_\infty,
\end{equation}
which verifies \cite[equation (2.3)]{Nic23}. Moreover, applying \cref{pro:Lipschitz}, for all $f_1, f_2 \in \cF_{M'}^\ell$, we deduce that
\begin{equation}
\norm{u(\cdot,\cdot;e^{f_1}) - u(\cdot,\cdot;e^{f_2})}_{L^2(0,T;L^2(\Omega))} \le L \norm{e^{f_1} - e^{f_2}}_{L^2(\Omega)} \le Le^{M'} \norm{f_1 - f_2}_{L^2(\Omega)},
\end{equation}
which corresponds to \cite[equation (2.4)]{Nic23}. Therefore, \cite[Condition 2.1.1]{Nic23} is satisfied and \cite[Theorem 2.2.2]{Nic23} gives the desired result.
\end{proof}

\begin{remark} \label{rem:ThetaMN}
The statements of \cref{thm:contraction_solution,thm:contraction_parameter} are formulated on the sets $\Theta_{M,N}$, which impose the bound $\norm{f}_{W^{\ell,\infty}(\Omega)} \le M$ that, as noted in the discussion preceding \eqref{eq:bayesian_model}, holds realisation by realisation rather than almost surely. This restriction is asymptotically immaterial. Indeed, \cite[Theorem 2.2.2]{Nic23} also provides, for $M$ sufficiently large, that
\begin{equation} \label{eq:ThetaMN_mass}
\Pi_N \left( \left. \Theta_{M,N}^c \right| \cD_N \right) \longrightarrow 0 \quad \text{in } P_N \text{ as } N \to \infty,
\end{equation}
so that the posterior concentrates precisely on the set where the deterministic estimates of \cref{sec:analysis_PDE,ssec:SE} are available; this is the role of the requirement that $M$ be sufficiently large in the two theorems. The distinction does, however, matter for the posterior mean estimator \eqref{eq:posterior_mean_estimator}, which averages over the whole posterior and not only over $\Theta_{M,N}$; the contribution of $\Theta_{M,N}^c$ is controlled by \eqref{eq:ThetaMN_mass} together with \cite[Theorem 2.3.2]{Nic23}, which is what allows \eqref{eq:estimate_posterior_mean_estimator} to be deduced from \eqref{eq:contraction_parameter}.
\end{remark}

Finally, in the next result, we establish the corresponding posterior contraction rate for the unknown permittivity $\alpha$.

\begin{theorem} \label{thm:contraction_parameter}
Under \cref{as:alpha_u0,as:prior,as:regularity}, if $R,M > 0$ are sufficiently large, it holds
\begin{equation} \label{eq:contraction_parameter}
\lim_{N\to\infty} \Pi_N \left( \left. f \in \Theta_{M,N} \colon \norm{\fa(f) - \alpha_0}_{L^2(\Omega)} > R \delta_N^{1 - \frac2\kappa} \right| \cD_N \right) = 0, \quad \text{in } P_N,
\end{equation}
where $\Theta_{M,N}$ is defined in equation \eqref{eq:def_Theta_MN} and $\kappa$ is given in \cref{as:regularity}. Moreover, the posterior mean estimator satisfies
\begin{equation} \label{eq:estimate_posterior_mean_estimator}
\norm{\widehat\alpha_N - \alpha_0}_{L^2(\Omega)} = \cO_{P_N} \left( \delta_N^{1 - \frac2\kappa} \right) = \cO_{P_N} \left( N^{-\mu} \right), \quad \text{with} \quad \mu = \frac{s(\kappa - 2)}{\kappa (2s + d)},
\end{equation}
where $\widehat\alpha_N$ is defined in equation \eqref{eq:posterior_mean_estimator}, and $\cO_{P_N}$ denotes the standard stochastic big-$\cO$ notation with respect to the probability measure $P_N$.
\end{theorem}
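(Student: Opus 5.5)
The plan is to combine the forward contraction result of \cref{thm:contraction_solution} with the backward stability estimate of \cref{cor:backward_stability_estimate}, following the general scheme of \cite[Theorem 2.3.1]{Nic23}. For the posterior mean estimator, we then invoke \cite[Theorem 2.3.2]{Nic23}.

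\textbf{Step 1: contraction for $\alpha$ on $\Theta_{M,N}$.} Fix $M$ large enough that \cref{thm:contraction_solution} applies and that $\Pi_N(\Theta_{M,N}^c\mid\cD_N)\to0$ in $P_N$ (see \cref{rem:ThetaMN}). Every $f\in\Theta_{M,N}$ satisfies $\norm{f}_{W^{\ell,\infty}(\Omega)}\le M\le M'$, so $\fa(f)\in\cA_M^\ell$; the same holds for $\alpha_0$ after enlarging $M$, since $f_0\in\cH\hookrightarrow W^{\ell,\infty}(\Omega)$. Hence \cref{cor:backward_stability_estimate} applies with $\alpha_1=\fa(f)$ and $\alpha_2=\alpha_0$, and the constant $\cK$ is uniform over $\Theta_{M,N}$. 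Since $[t_0,t_1]\subset[0,T]$, it yields
\begin{equation}
\norm{\fa(f)-\alpha_0}_{L^2(\Omega)}\le \cK\norm{u(\cdot,\cdot;\fa(f))-u(\cdot,\cdot;\alpha_0)}_{L^2(0,T;L^2(\Omega))}^{1-\frac2\kappa}.
\end{equation}
Therefore, on $\Theta_{M,N}$, the event $\norm{\fa(f)-\alpha_0}_{L^2}>R\delta_N^{1-2/\kappa}$ is contained in the event $\norm{u(\fa(f))-u(\alpha_0)}_{L^2L^2}>(R/\cK)^{\kappa/(\kappa-2)}\delta_N$. Taking $R$ large enough that $(R/\cK)^{\kappa/(\kappa-2)}$ exceeds the constant $R$ of \cref{thm:contraction_solution}, that theorem gives \eqref{eq:contraction_parameter}.

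\textbf{Step 2: posterior mean.} Set $\varepsilon_N=\delta_N^{1-2/\kappa}$ and split $\widehat\alpha_N-\alpha_0=\E^{\Pi_N}[(\fa(f)-\alpha_0)(\mathbbm{1}_{B_N}+\mathbbm{1}_{B_N^c})\mid\cD_N]$, where $B_N=\{f\in\Theta_{M,N}:\norm{\fa(f)-\alpha_0}_{L^2}\le R\varepsilon_N\}$. By Jensen's inequality, the contribution of $B_N$ is at most $R\varepsilon_N$. For $B_N^c$, Cauchy--Schwarz gives the bound
\begin{equation}
\bigl(\E^{\Pi_N}[\norm{\fa(f)-\alpha_0}_{L^2}^2\mid\cD_N]\bigr)^{1/2}\,\Pi_N(B_N^c\mid\cD_N)^{1/2}.
\end{equation}
Here $\norm{\fa(f)}_{L^2}\lesssim a+e^{\norm{f}_\infty}$, and moments of this quantity are finite under the Gaussian prior by Fernique's theorem.

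This second term is the main obstacle. The rate at which $\Pi_N(B_N^c\mid\cD_N)$ tends to zero must be exponential, of the form $e^{-cN\delta_N^2}$, which is what the proof of \cite[Theorem 2.2.2]{Nic23} actually yields, together with the mass bound on $\Theta_{M,N}^c$. The exponential moment must also be controlled under the posterior rather than under the prior. I would follow the argument of \cite[Theorem 2.3.2]{Nic23}: lower-bound the evidence by $e^{-cN\delta_N^2}$ on an event of $P_N$-probability tending to one, which reduces the problem to a prior expectation of $\norm{\fa(f)-\alpha_0}^2e^{\norm{f}}$ restricted to a set of prior probability $e^{-C'N\delta_N^2}$. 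Fernique's theorem, applied with the rescaling $f/(\sqrt N\delta_N)$, keeps this prior moment polynomially bounded, so choosing $M$ large makes $C'$ dominate. The $B_N^c$ term is then $o(\varepsilon_N)$, and \eqref{eq:estimate_posterior_mean_estimator} follows. Substituting $\delta_N=N^{-s/(2s+d)}$ gives $\mu=s(\kappa-2)/(\kappa(2s+d))$.
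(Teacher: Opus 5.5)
Your proposal is correct and follows the paper's proof: Step 1 is exactly the paper's reduction via \cref{cor:backward_stability_estimate}, which the paper phrases through the set $\fB_{M,N}$ with $R'=(R/\cK)^{\kappa/(\kappa-2)}$ before applying \cref{thm:contraction_solution}. For the posterior mean the paper simply cites \cite[Theorem 2.3.2]{Nic23}; your Step 2 is a sound sketch of that theorem's mechanism, and you are right that it needs the exponential posterior-mass bounds from the proof of \cite[Theorem 2.2.2]{Nic23}, not just the convergence in probability stated in \cref{thm:contraction_solution}.
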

\begin{proof}
Let $R' > 0$ and consider the set
\begin{equation}
\fB_{M,N} \coloneq \left\{ f \in \Theta_{M,N} \colon \mathfrak U(\fa(f),\alpha_0) \le R'\delta_N \right\},
\end{equation}
where $\mathfrak U$ is defined for $\alpha_1,\alpha_2 \in \cA_M^\ell$ as
\begin{equation}
\mathfrak U(\alpha_1,\alpha_2) \coloneq \norm{u(\cdot,\cdot;\alpha_1) - u(\cdot,\cdot;\alpha_2)}_{L^2(t_0,t_1;L^2(\Omega))}.
\end{equation}
In the set $\fB_{M,N}$, we can apply \cref{cor:backward_stability_estimate} with $\alpha_1 = \fa(f)$ and $\alpha_2 = \alpha_0$. In particular, we have
\begin{equation}
\Pi_N \left( \left. \fB_{M,N} \right| \cD_N \right) \le \Pi_N \left( \left. f \in \Theta_{M,N} \colon \norm{\fa(f) - \alpha_0}_{L^2(\Omega)} \le \cK (R')^{1 - \frac2\kappa} \delta_N^{1 - \frac2\kappa} \right| \cD_N \right),
\end{equation}
which, by choosing $R' = (R/\cK)^{\kappa/(\kappa-2)}$, implies
\begin{equation}
\Pi_N \left( \left. f \in \Theta_{M,N} \colon \norm{\fa(f) - \alpha_0}_{L^2(\Omega)} > R \delta_N^{1 - \frac2\kappa} \right| \right) \le \Pi_N(\fB_{M,N}^c | \cD_N).
\end{equation}
The claim in \eqref{eq:contraction_parameter} then follows by applying \cref{thm:contraction_solution} and choosing $R'$, and hence $R$, sufficiently large. Finally, estimate \eqref{eq:estimate_posterior_mean_estimator} for the posterior mean estimator is an immediate consequence of \eqref{eq:contraction_parameter} together with \cite[Theorem 2.3.2]{Nic23} and \eqref{eq:delta_N}.
\end{proof}

\section{Numerical experiments} \label{sec:numerics}

In this section, we present results obtained by numerical solution of the inverse problem of estimating the permittivity coefficient $\alpha$ of the Darcy equation from discrete observations of the solution to the Fokker--Planck equation, as defined in \cref{ssec:IP}. We consider the cases $d=1$ and $d=2$, for which the regularity condition in \cref{as:regularity} is satisfied, as discussed in \cref{rem:optimal_regularity}. We set the diffusion coefficient to $\sigma=1$ and the final time to $T=1$. We assign a Matern-like Gaussian prior distribution $\mathcal N(0, \Sigma)$, on $\R^d$, to the true log-shifted permittivity $f$ (with $a = 1$), where
\begin{equation} \label{eq:covariance}
\Sigma = \varsigma^2 (\tau^2 I - \Delta)^{- \theta},
\end{equation}
and then consider the restriction of $f$ to the domain $\Omega$.
From this prior distribution, we sample the true unknown parameter $f_0$, which is approximated using a truncated Karhunen--Loève expansion with $50$ modes. We remark that, although a sample from $\mathcal N(0,\Sigma)$ belongs to its RKHS with probability zero, \cref{as:prior} is satisfied, since a finite Karhunen--Loève sum lies in $\mathcal H$. Note that (see, e.g., \cite[Section 2.1]{LRL11}) the Gaussian distribution $\mathcal N(0,\Sigma)$ corresponds to a Matérn kernel with smoothness parameter $\theta - d/2$, correlation length $1/\tau$, and standard deviation
\begin{equation}
\sqrt{\frac{\varsigma^2 \Gamma(\theta - d/2)}{(4\pi)^{d/2} \Gamma(\theta) \tau^{2\theta - d}}}.
\end{equation}
The standard deviation of the observational noise is assumed to be $\gamma = 0.002$. The forward system \eqref{eq:PDE_system} is discretized using P1 finite elements in space and an implicit Euler scheme in time. Samples from the posterior distribution $\pi_N(\cdot | \mathcal D_N)$ are obtained via the pCN algorithm. The numerical experiments are implemented in Julia, using the \texttt{EnsembleKalmanProcesses} package for prior sampling and \texttt{Gridap} for the finite element discretization. In \cref{ssec:DNO} we numerically study dependence on the number of observations, in a one-dimensional setting; in \cref{ssec:2D} we describe numerical experiments in two dimensions.

\subsection{Dependence on the number of observations} \label{ssec:DNO}

\begin{figure}
\begin{center}
\begin{tabular}{ccc}
\includegraphics{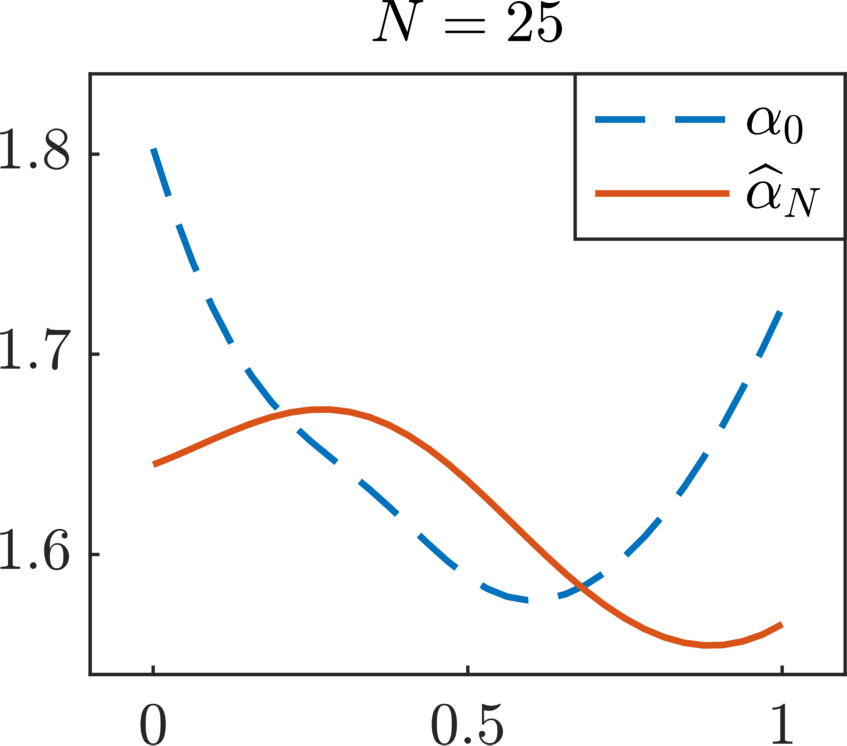} &
\includegraphics{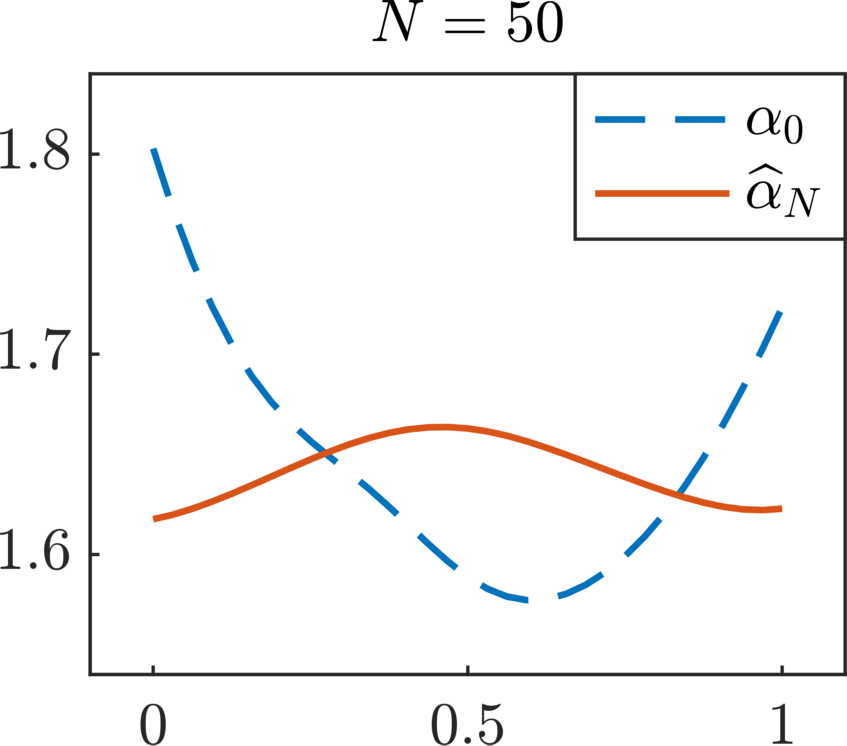} &
\includegraphics{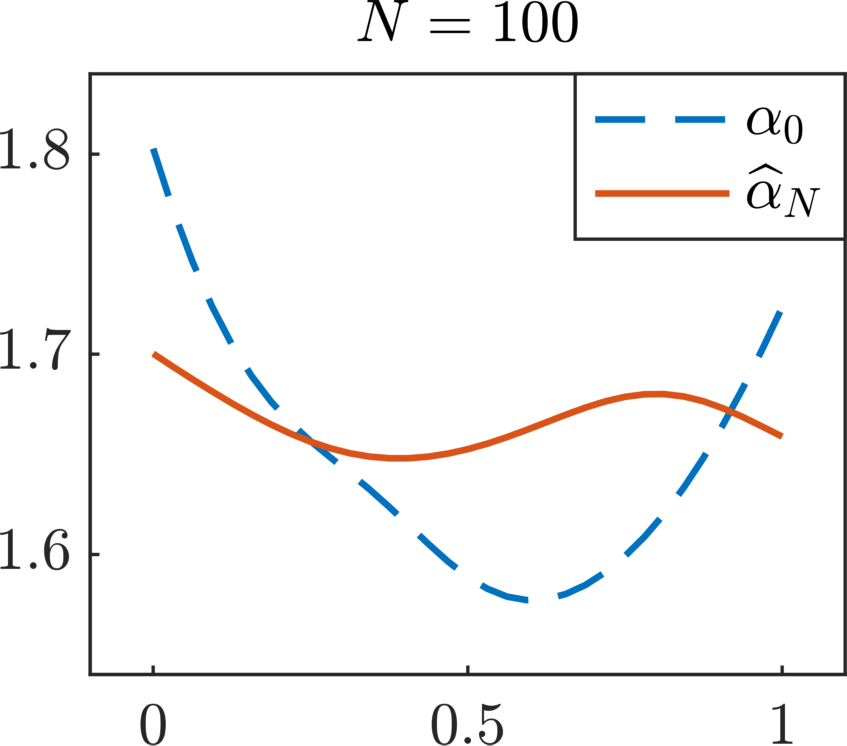} \\[0.5cm]
\includegraphics{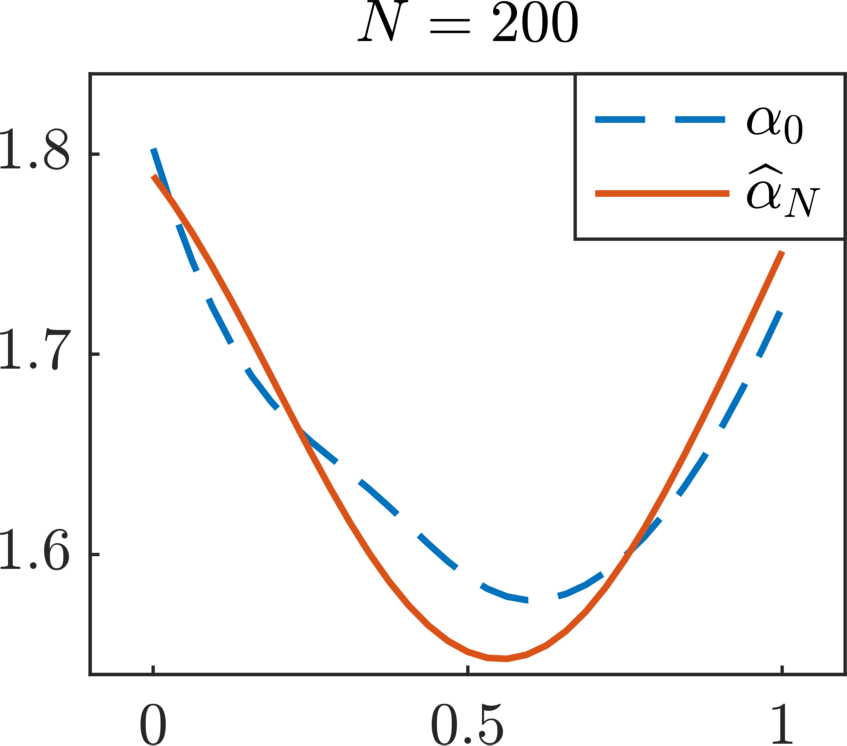} &
\includegraphics{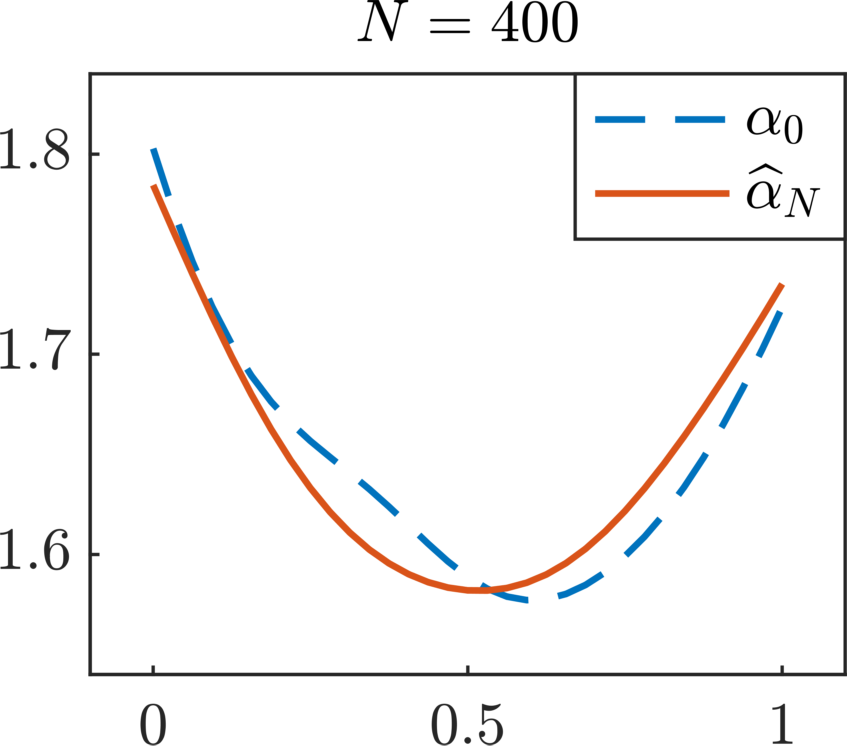} &
\includegraphics{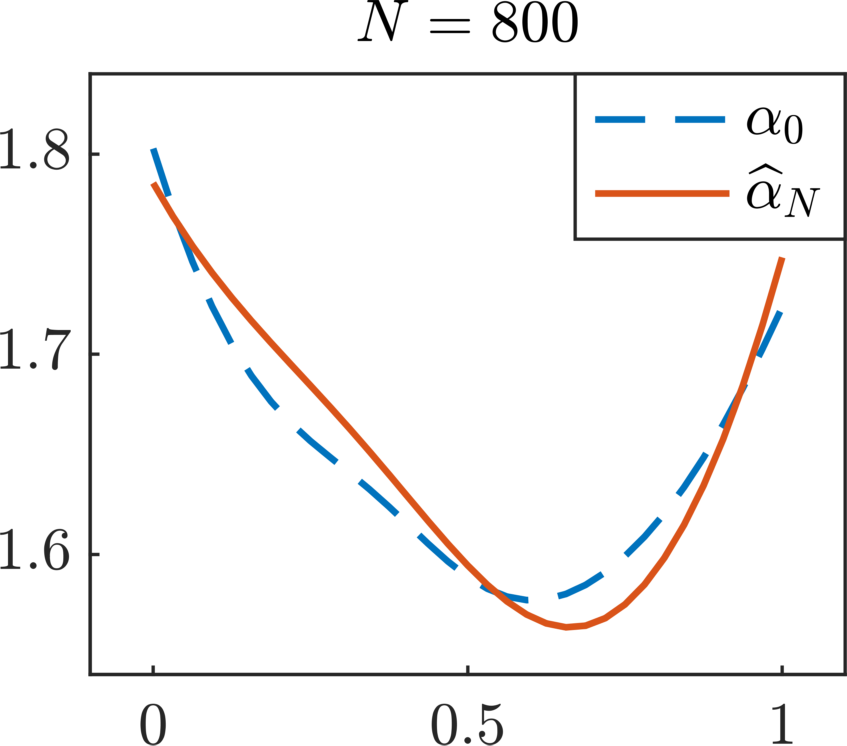}
\end{tabular}
\end{center}
\caption{Comparison of the true 1D permittivity field $\alpha_0$ and the posterior mean estimator $\widehat\alpha_N$ for varying numbers of observations $N \in \{25, 50, 100, 200, 400, 800\}$. The reconstruction accuracy improves as the number of observations increases.}
\label{fig:alpha_ex_hat_1D_N}
\end{figure}

\begin{figure}
\begin{center}
\includegraphics{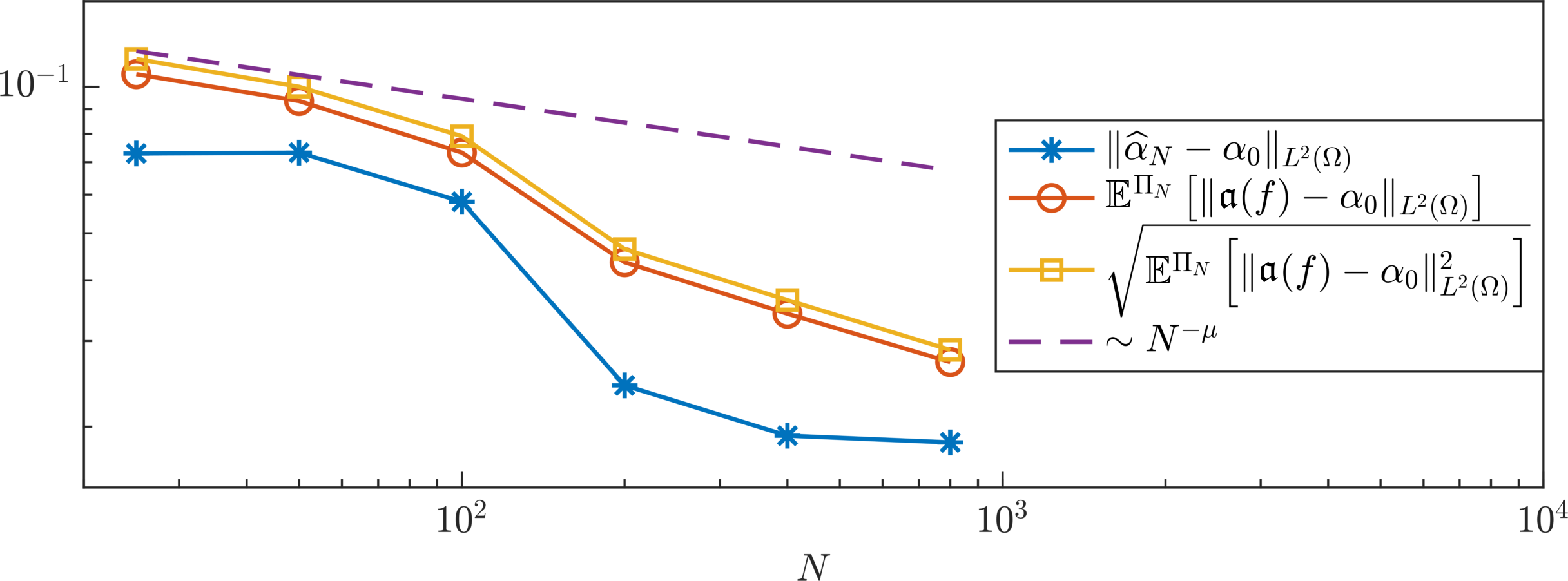}
\end{center}
\caption{Convergence of the approximation errors in the 1D setting. The plot compares the $L^2$ error of the posterior mean, the mean absolute error (MAE), and the mean squared error (MSE) as functions of the number of observations $N$, together with the theoretical convergence rate $N^{-\mu}$.}
\label{fig:alpha_ex_hat_1D_rateN}
\end{figure}

\begin{figure}
\begin{center}
\begin{tabular}{ccccc}
\includegraphics{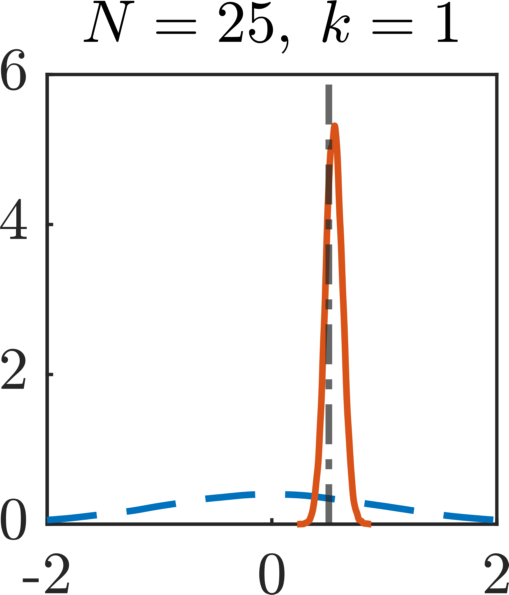} &
\includegraphics{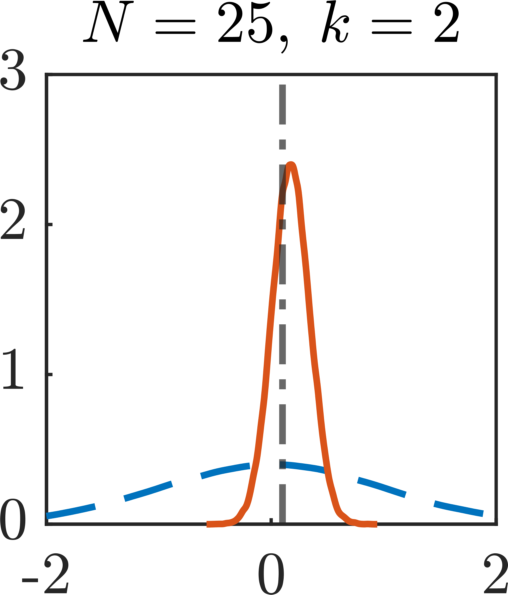} &
\includegraphics{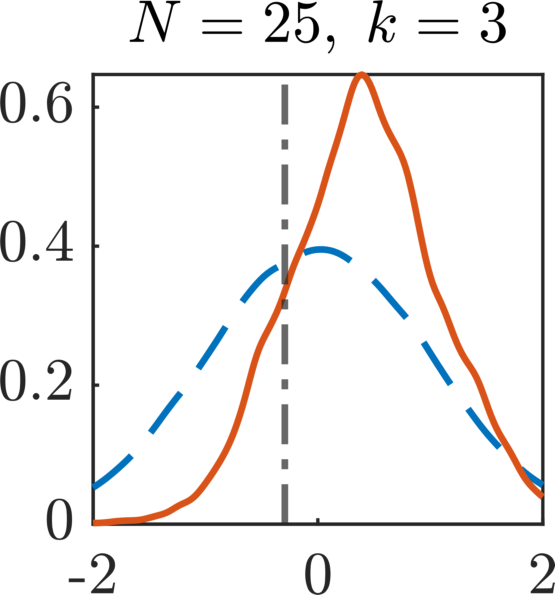} &
\includegraphics{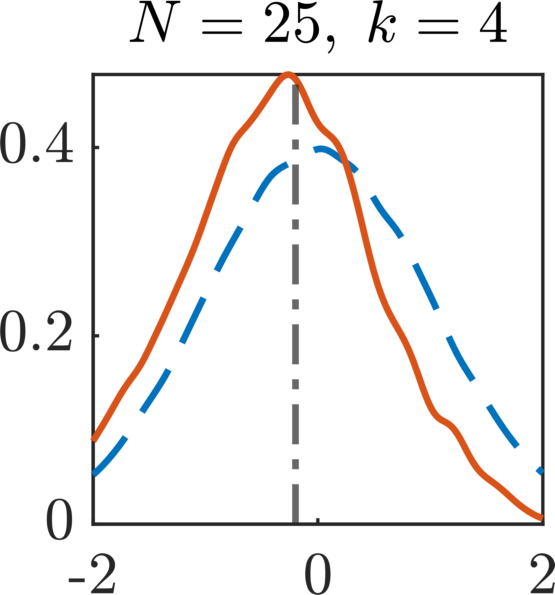} &
\includegraphics{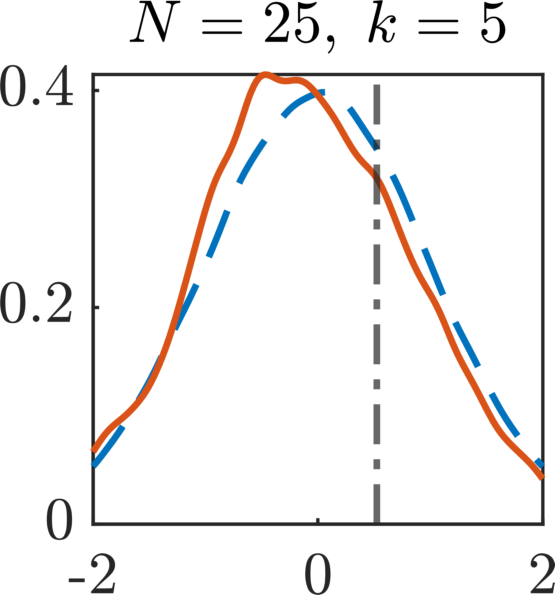} \\
\includegraphics{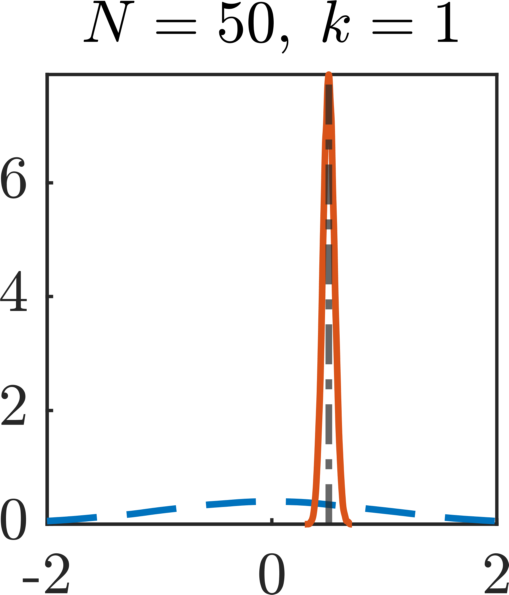} &
\includegraphics{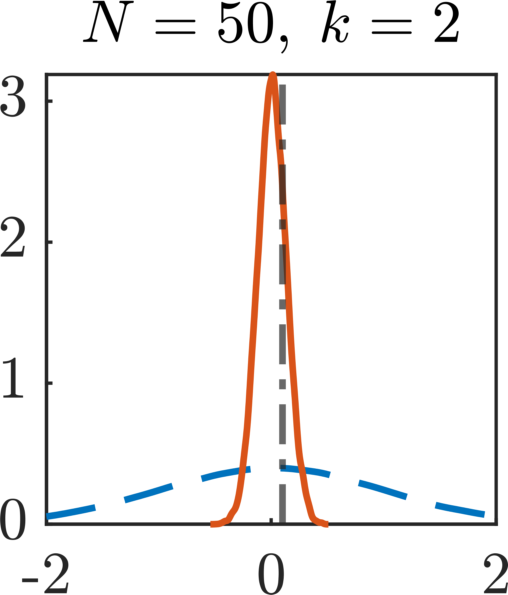} &
\includegraphics{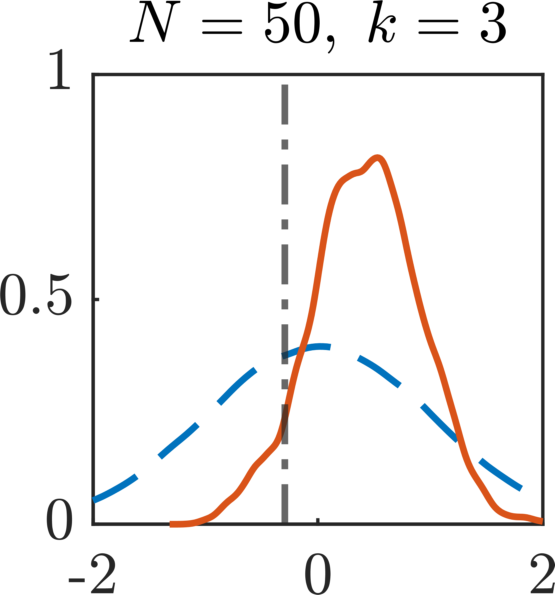} &
\includegraphics{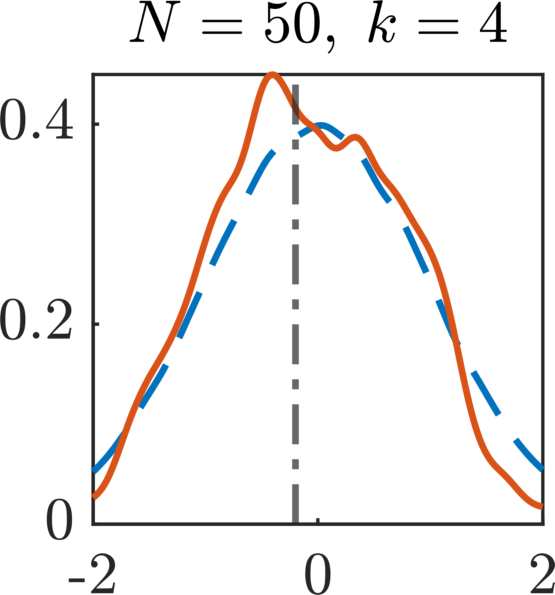} &
\includegraphics{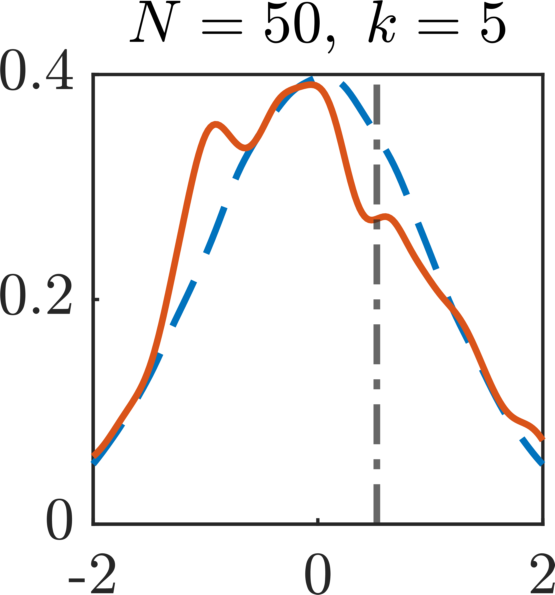} \\
\includegraphics{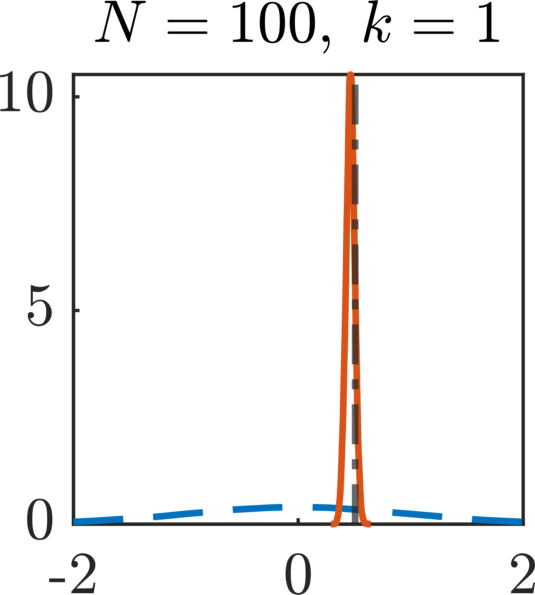} &
\includegraphics{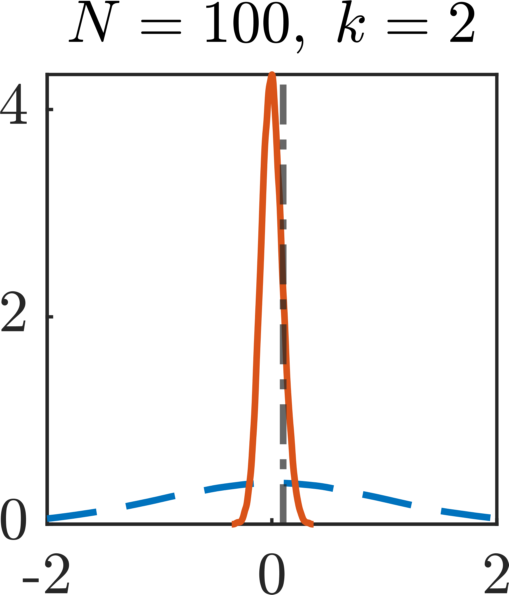} &
\includegraphics{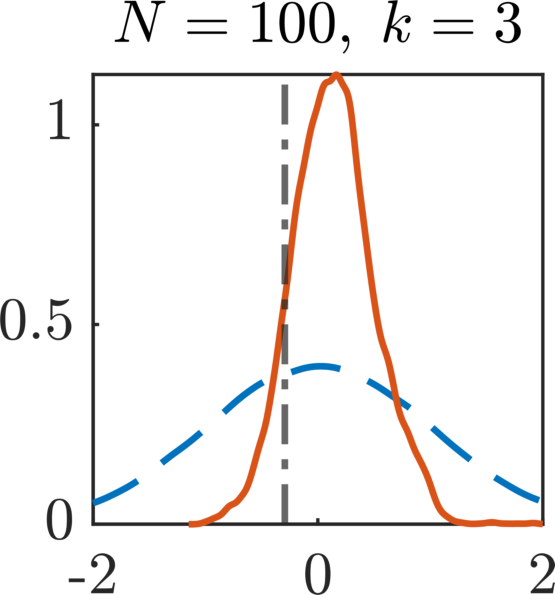} &
\includegraphics{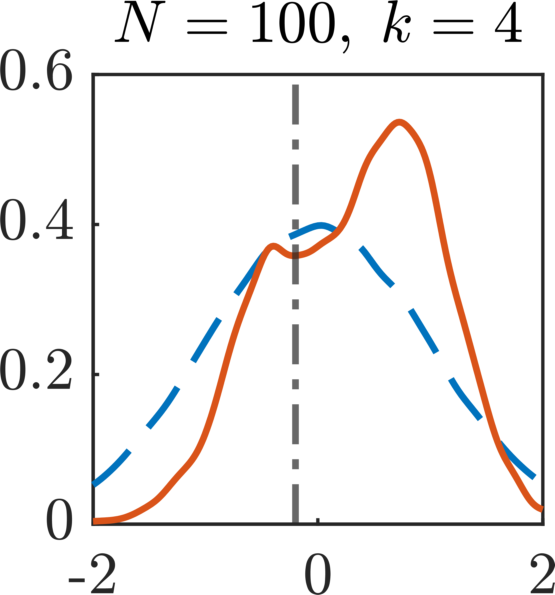} &
\includegraphics{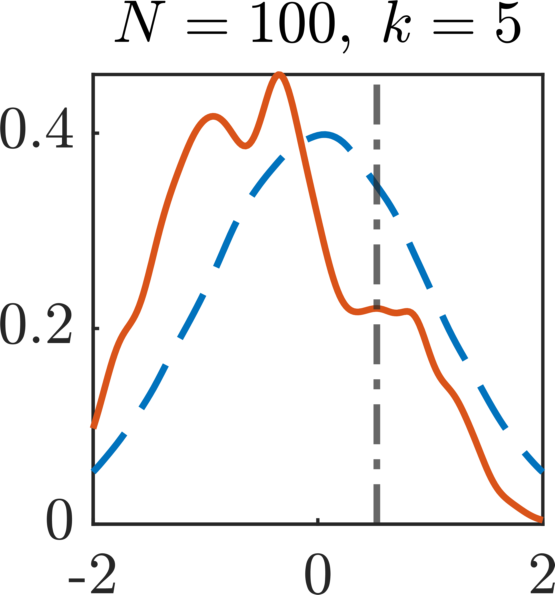} \\
\includegraphics{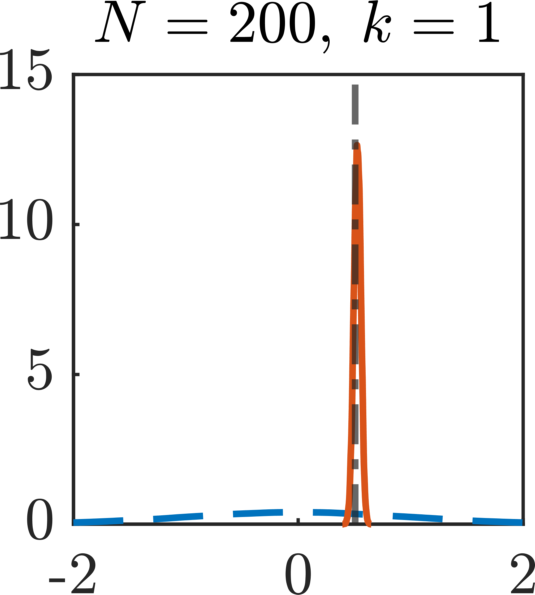} &
\includegraphics{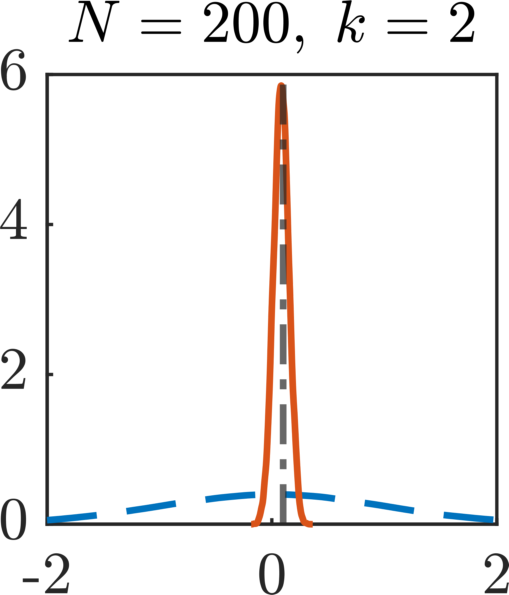} &
\includegraphics{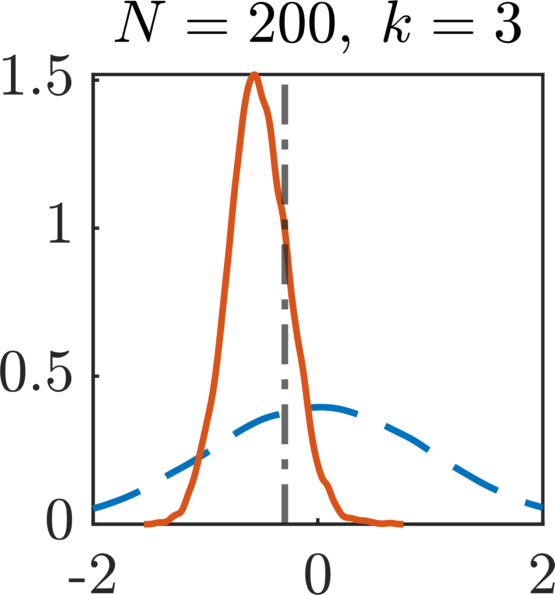} &
\includegraphics{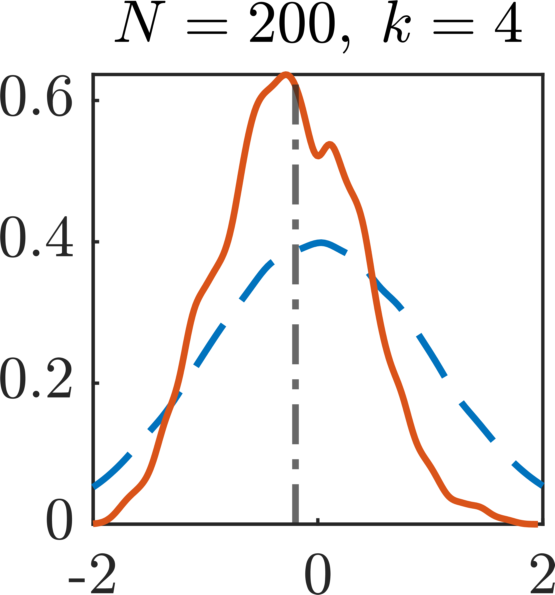} &
\includegraphics{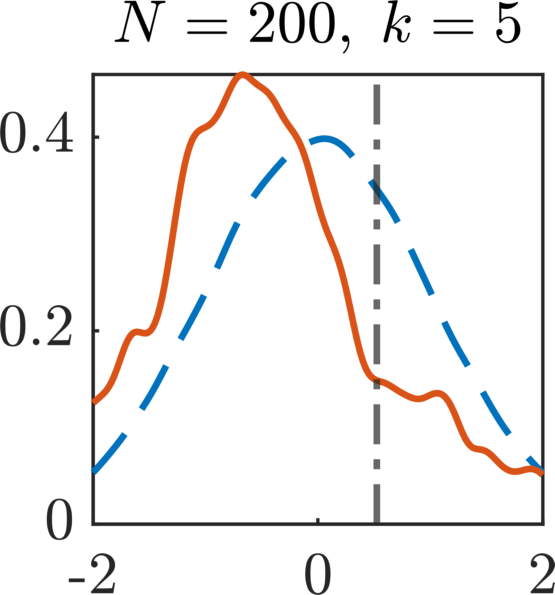} \\
\includegraphics{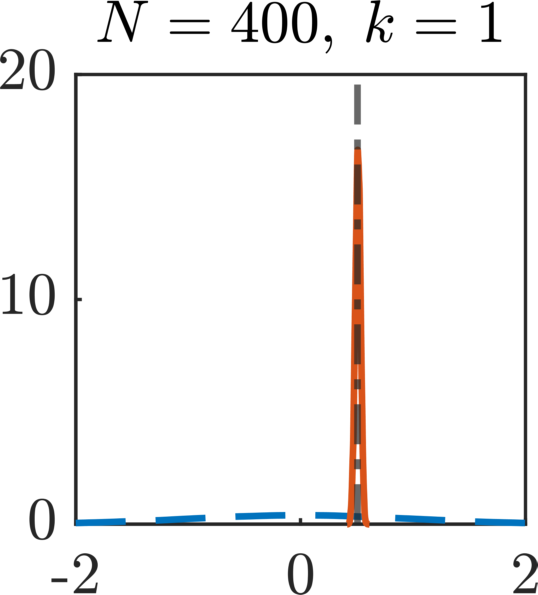} &
\includegraphics{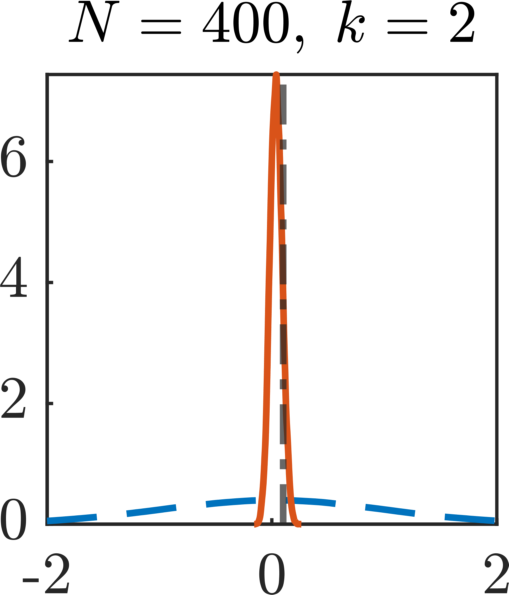} &
\includegraphics{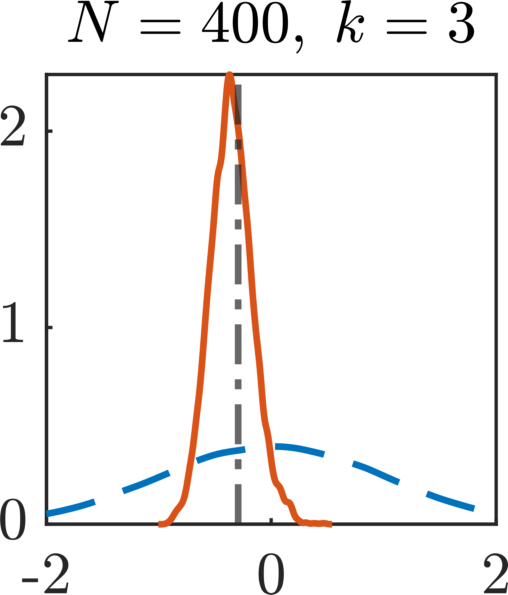} &
\includegraphics{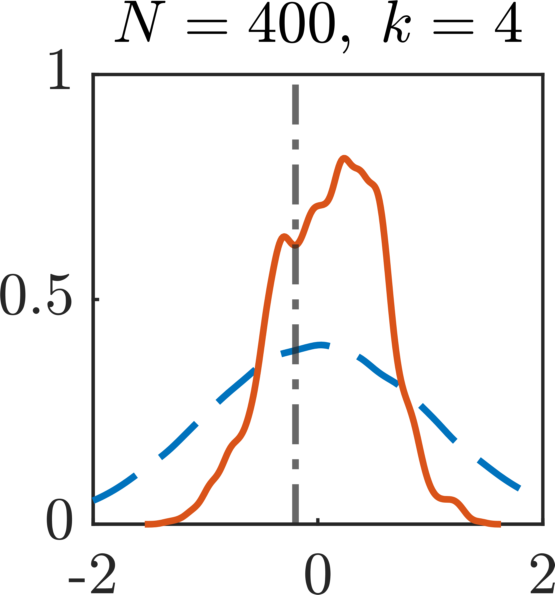} &
\includegraphics{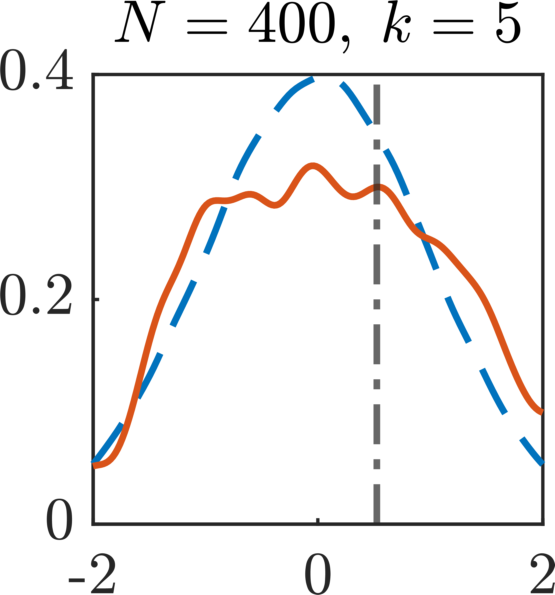} \\
\includegraphics{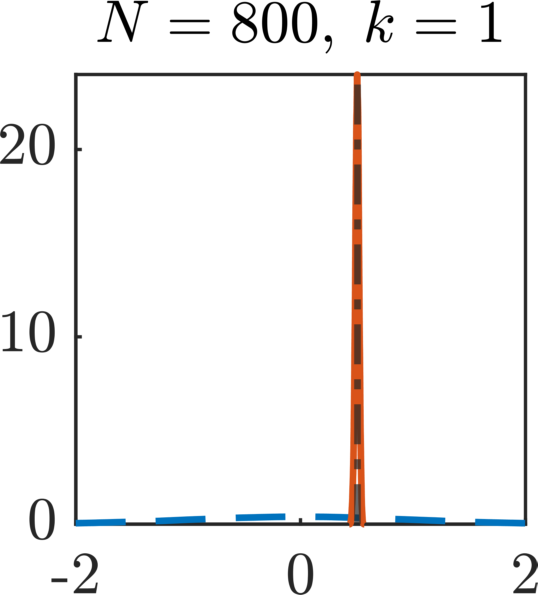} &
\includegraphics{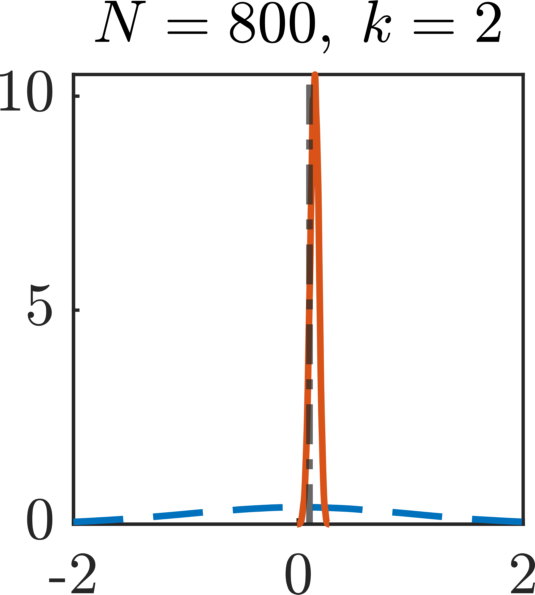} &
\includegraphics{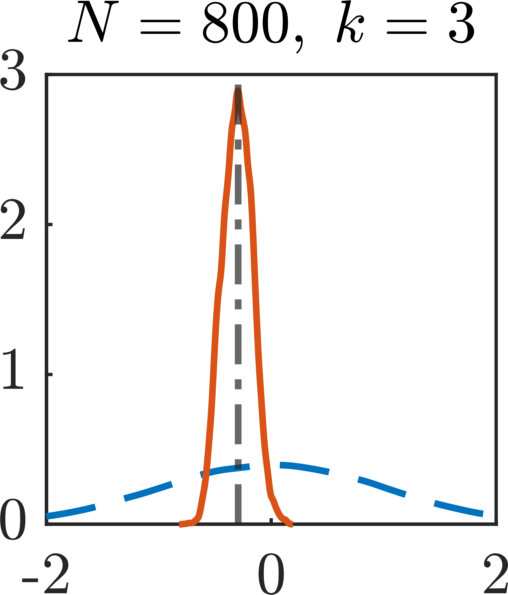} &
\includegraphics{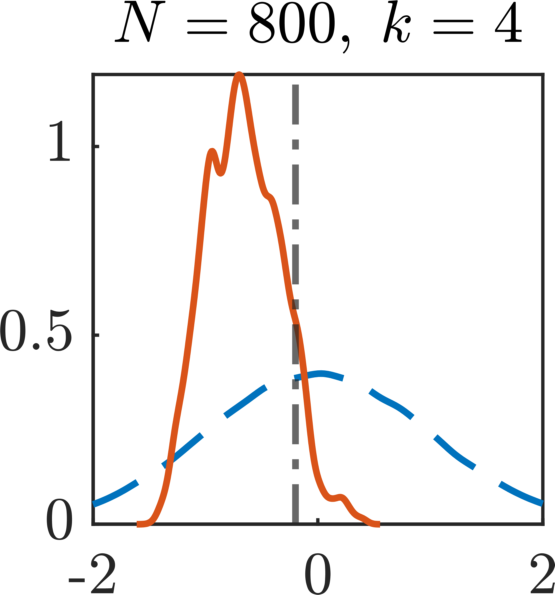} &
\includegraphics{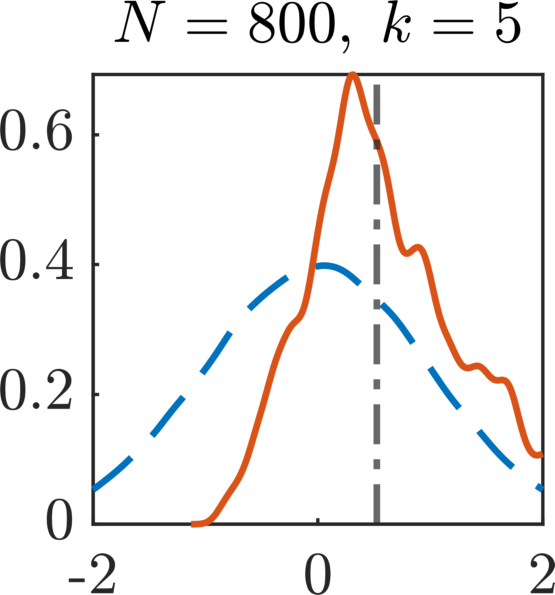}
\end{tabular}
\end{center}
\caption{Marginal distributions projected onto the first five Karhunen--Loève modes in the 1D setting, for increasing values of $N$ (top to bottom). The dashed blue and solid orange lines represent the prior and posterior distributions, respectively, while the vertical dash-dotted lines represent the true coefficients.} 
\label{fig:projections_1D_pN}
\end{figure}

We first consider the one-dimensional setting ($d = 1$) on the spatial domain $[0,L]$ with $L=1$. The initial density in equation \eqref{eq:PDE_system_IC} is given by
\begin{equation}
u_0(x) = \frac{1 + 10 x (L - x)}{L + \frac{5}{3} L^3}.
\end{equation}
Both the spatial domain $[0,L]$ and the temporal domain $[0,T]$ are partitioned into $32$ elements. The observations $\mathcal D_N$ are sampled uniformly in the spatio-temporal cylinder $\Omega \times [t_0,t_1]$, with $t_0 = 0.1$ and $t_1 = 0.9$. We set the hyperparameters of the covariance \eqref{eq:covariance} to $\varsigma = 10^4$, $\tau = 5$, and $\theta = 5.6$, so that \cref{as:prior} is satisfied.

To empirically investigate the theoretical contraction results of \cref{thm:contraction_parameter}, we solve the inverse problem for varying numbers of observations $N \in \{25, 50, 100, 200, 400, 800\}$. The proposal step size $\zeta$ of the pCN algorithm is tuned to yield an acceptance rate between $10\%$ and $40\%$. Specifically, for the respective values of $N$, we set $\zeta \in \{0.1, 0.06, 0.05, 0.04, 0.03, 0.03\}$. We extract $10^5$ samples and discard the first $10^4$ as burn-in.

In \cref{fig:alpha_ex_hat_1D_N}, we compare the posterior mean estimator $\widehat\alpha_N$ with the true permittivity field. As expected, a small number of observations is insufficient to accurately reconstruct the exact coefficient. However, as $N$ increases, the posterior mean estimator successfully captures the true field. This behavior is further illustrated in \cref{fig:alpha_ex_hat_1D_rateN}, which displays the $L^2$ error of the posterior mean estimator together with the mean absolute error (MAE) and mean squared error (MSE), compared with the theoretical convergence rate $N^{-\mu}$. Here, $\mu$ is computed by setting $d = 1$, $s = \theta$, and $\kappa = \ell - 1$ with $\ell = 4$, corresponding to the assumption that the optimal regularity in \cref{rem:optimal_regularity} is attained, yielding $\mu \simeq 0.153$.

The empirical rates obtained from the MAE and MSE appear slightly faster than the theoretical bound, whereas the $L^2$ error of the posterior mean exhibits a more variable decay. We emphasize that the convergence rate in \cref{thm:contraction_parameter} bounds the posterior contraction in probability with respect to the data distribution $P_N$, rather than holding almost surely. Thus, the $L^2$ error of a single posterior mean estimate is not guaranteed to strictly follow this rate. Furthermore, the MAE and MSE are integrated with respect to the posterior $\Pi_N$, offering a related but distinct metric from the exact theoretical bound. Consequently, \cref{fig:alpha_ex_hat_1D_rateN} serves primarily as a qualitative validation, demonstrating error reduction as data increases, rather than a strict quantitative verification of the convergence rate.

Finally, in \cref{fig:projections_1D_pN}, we show the marginal posterior distributions projected onto the first five modes of the Karhunen--Loève expansion for each value of $N$. The distributions concentrate around the exact values of the coefficients. For the first three modes, the distributions shrink, the peaks become taller, and the bias decreases as the number of observations increases. The higher modes initially resemble the prior distribution and start changing shape when $N$ grows. However, even the largest dataset ($N = 800$) is insufficient to reliably capture the fourth and fifth modes.

\subsection{Two-dimensional example} \label{ssec:2D}

\begin{figure}
\begin{center}
\begin{tabular}{cccc}
\includegraphics{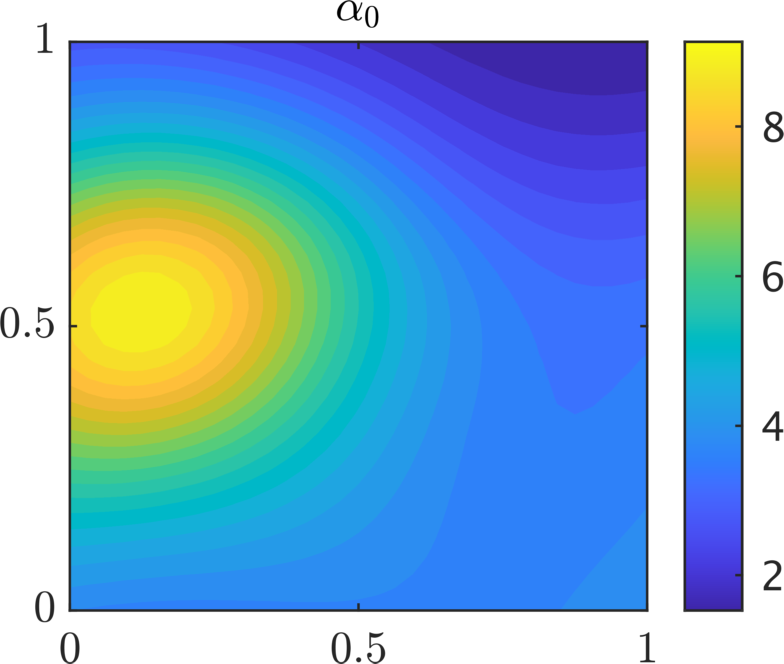} &&
\includegraphics{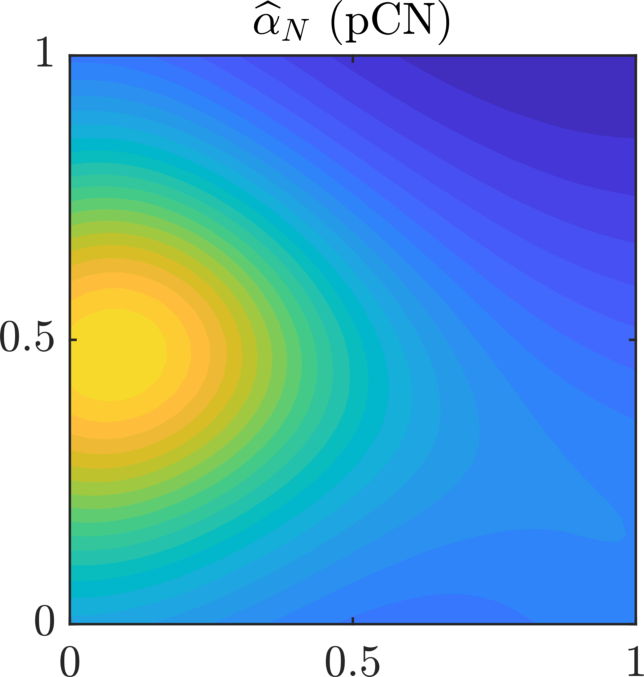} &
\includegraphics{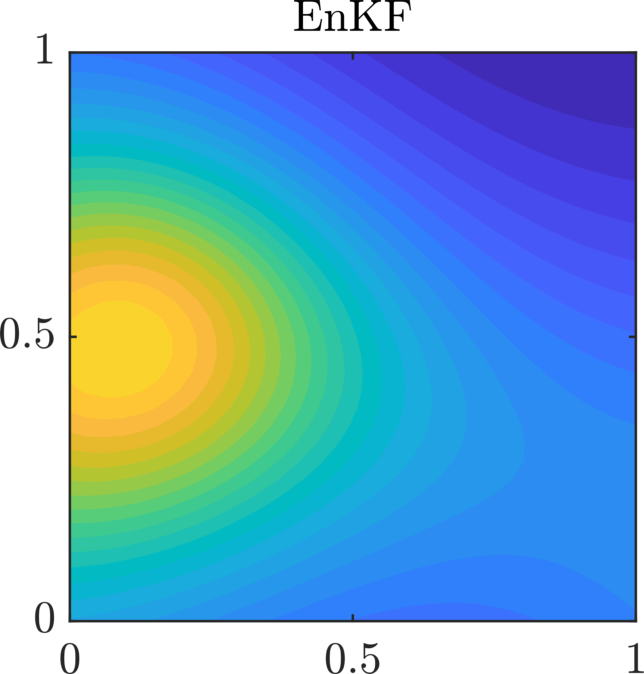}
\end{tabular}
\end{center}
\caption{Comparison in the 2D setting for $N=500$ observations. Left: true permittivity field $\alpha_0$. Center: posterior mean obtained via the pCN algorithm. Right: ensemble mean obtained at the second and final iteration of the EnKF.}
\label{fig:alpha_ex_hat_2D}
\end{figure}

\begin{figure}
\begin{center}
\begin{tabular}{ccccc}
\includegraphics{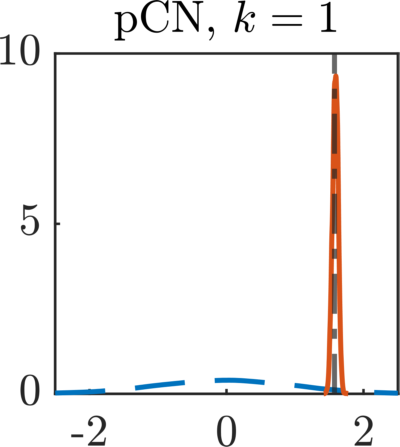} &
\includegraphics{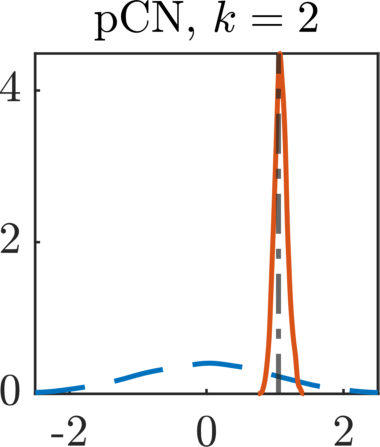} &
\includegraphics{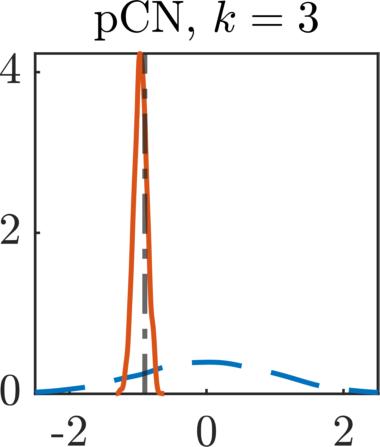} &
\includegraphics{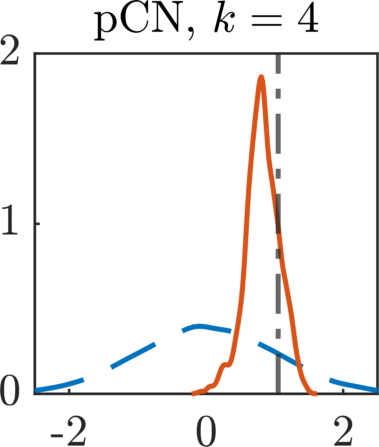} &
\includegraphics{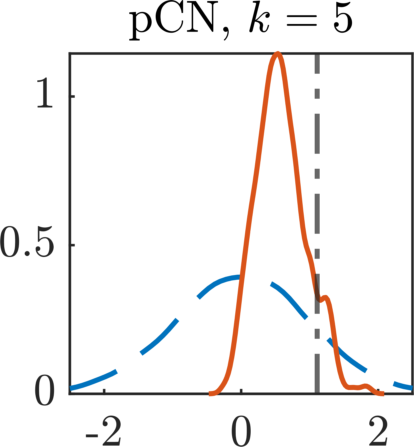} \\
\includegraphics{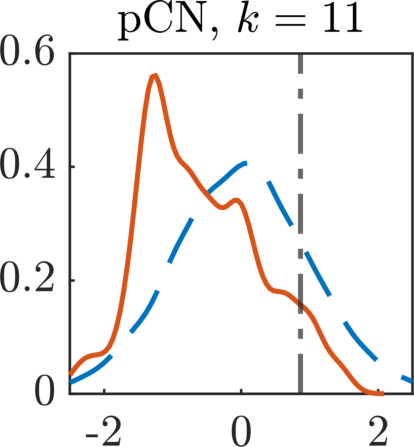} &
\includegraphics{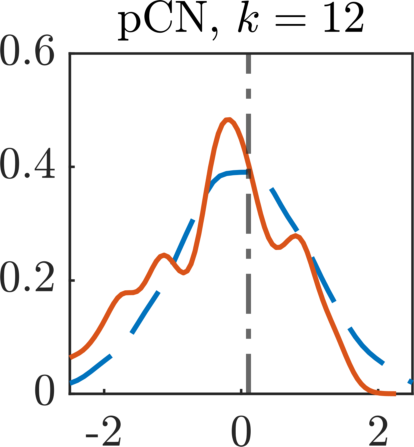} &
\includegraphics{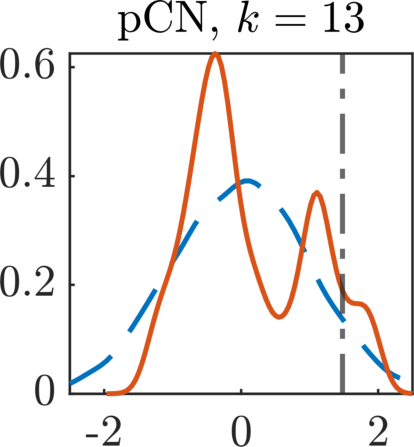} &
\includegraphics{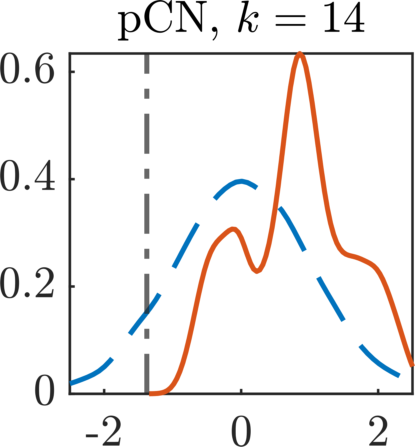} &
\includegraphics{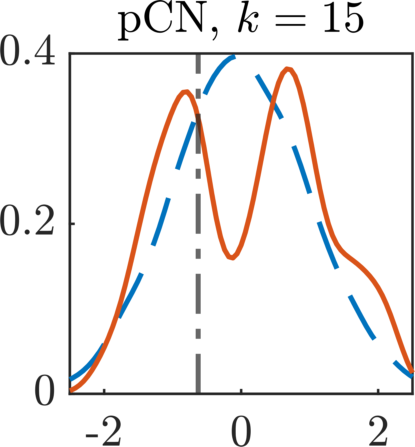} \\[1cm]
\includegraphics{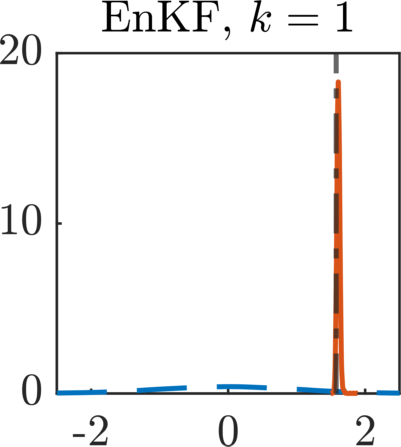} &
\includegraphics{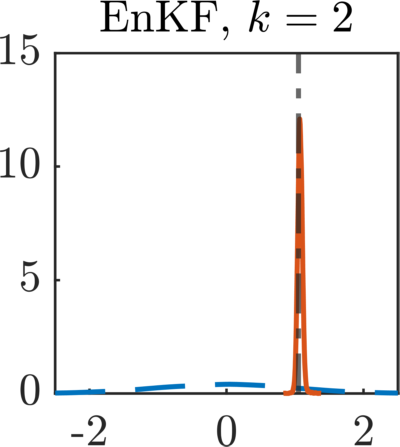} &
\includegraphics{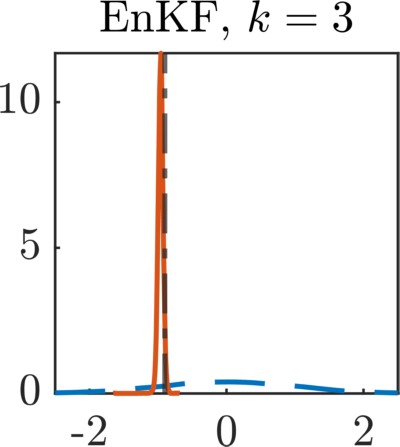} &
\includegraphics{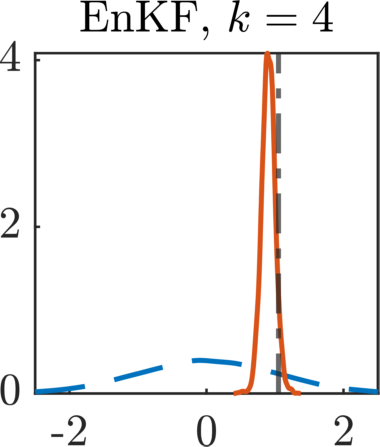} &
\includegraphics{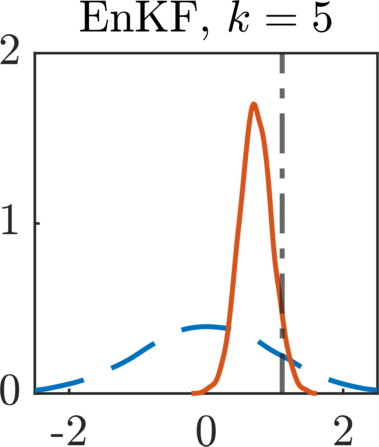} \\
\includegraphics{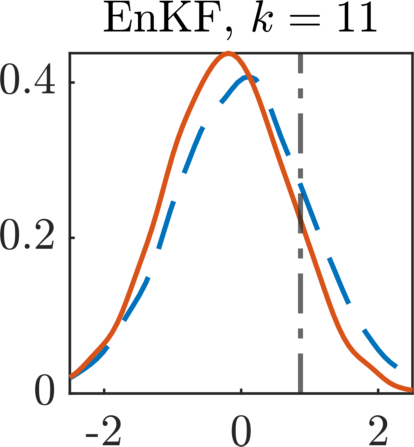} &
\includegraphics{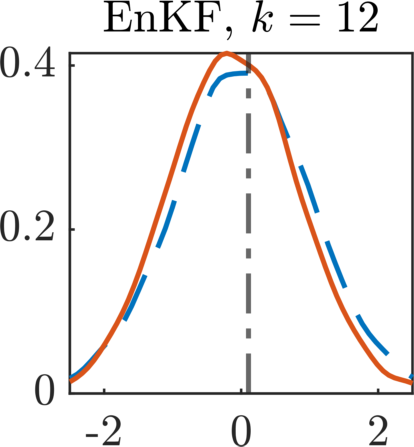} &
\includegraphics{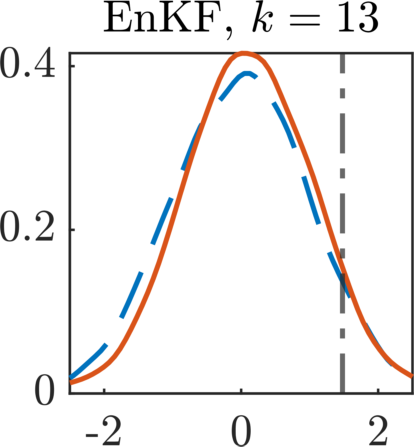} &
\includegraphics{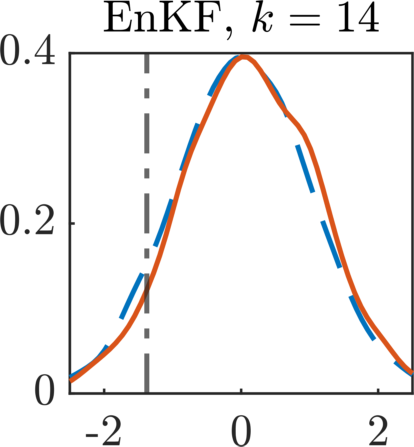} &
\includegraphics{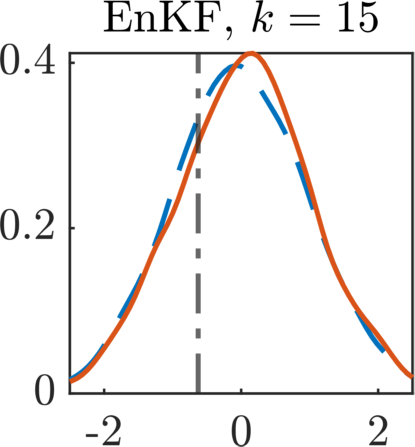}
\end{tabular}
\end{center}
\caption{Marginal distributions for several Karhunen--Loève modes in the 2D setting with $N=500$. The top two rows display the results from the pCN algorithm, while the bottom two rows display the results from the EnKF. The dashed blue and solid orange lines represent the prior and posterior distributions, respectively, while the vertical dash-dotted lines represent the true coefficients.}
\label{fig:projections_2D_p}
\end{figure}

We now consider the inverse problem in the two-dimensional case ($d=2$). We compare the performance of the pCN algorithm with that of the Ensemble Kalman Filter (EnKF) \cite{ILS13} implemented via the \texttt{EnsembleKalmanProcesses} package. The spatial domain is $[0,L]^2$ with $L=1$, and the initial density in equation \eqref{eq:PDE_system_IC} is given by
\begin{equation}
u_0(x) = \frac{\left( 1 + 10 x_1 (L - x_1) \right) \left( 1 + 10 x_2 (L - x_2) \right)}{\left( L + \frac{5}{3} L^3 \right)^2}.
\end{equation}
Notice that this is an example in which the compatibility condition between the initial and boundary conditions is not satisfied. In fact, by \cref{rem:compatibility_conditions}, the compatibility condition requires $\partial u_0 / \partial\nu > 0$ everywhere on $\partial\Omega$, which fails, for example, on the boundary face $x_1 = 0$. Therefore, this illustrates that the restriction $t_0 > 0$ is not merely a technicality. The spatial domain $[0,L]^2$ is partitioned into $32$ elements per dimension, and the temporal domain $[0,T]$ is similarly divided into $32$ steps. The $N = 500$ observations $\mathcal D_N$ are sampled uniformly in the spatio-temporal domain $\Omega \times [t_0,t_1]$ with $t_0 = 0.2$ and $t_1 = 0.8$. We set the hyperparameters of the covariance \eqref{eq:covariance} to $\varsigma = 5 \cdot 10^4$, $\tau = 5$, and $\theta = 6.5$, so that \cref{as:prior} is satisfied.

In \cref{fig:alpha_ex_hat_2D}, we compare the posterior mean estimator $\widehat\alpha_N$ with the ensemble mean at the second iteration of the EnKF. We generate $10^4$ samples with the pCN algorithm and discard the first $10^3$ as burn-in, using a step size of $\zeta=0.05$. The number of particles in the ensemble of the EnKF is set to $5 \cdot 10^3$ to ensure the computational budget matches that of the pCN algorithm. Qualitatively, despite the limited number of observations, both methods successfully recover the main features of the parameter field at a reasonable computational cost.

Moreover, from the marginal posterior distributions projected onto the modes of the Karhunen--Loève expansion, we observe that the EnKF provides a slightly more accurate approximation of the true coefficients. Similar to the one-dimensional test case, the distributions contract around the true coefficients for the lower modes (1--5), while they remain closer to the prior distribution for the higher modes (11--15), indicating that the data we employ do not provide sufficient information to reliably infer these coefficients.

\section{Conclusion} \label{sec:conclusion}

We have studied the nonparametric inverse problem of recovering the permittivity of a coupled Fokker--Planck--Darcy system from discrete observations of the density, under the physically natural no-flux boundary conditions. On the analytical side, we established well-posedness, together with uniform upper and lower bounds for the solution of the coupled system. On the statistical side, we proved forward and backward stability estimates and derived a posterior contraction rate, together with the corresponding rate for the posterior mean estimator. Numerical experiments in one and two dimensions confirm the qualitative behavior predicted by the theory.

Several questions remain open. The most immediate concerns the regularity exponent $\kappa$ in \cref{as:regularity}, which is primarily used in the proof of \cref{cor:backward_stability_estimate}. As discussed in \cref{rem:optimal_regularity}, the argument there yields the assumption only for $\kappa<3$, whereas the natural expectation is $\kappa=\ell-1$, the highest regularity compatible with that of the permittivity. Establishing this bound, with constants uniform over the admissible set, would immediately improve the exponent $\mu$ in \cref{thm:contraction_parameter}, yielding the optimal rate obtainable within the present approach. It would therefore be desirable to prove the bounds in \cref{as:regularity} rather than assume them. A second question concerns the global solvability of the forward problem, which, to our knowledge, is already open for constant permittivity in three dimensions. Extending the two-dimensional result of \cite{BWN94} to a space-dependent permittivity would allow the time horizon to be taken arbitrarily large in dimensions $d\le2$. This would involve studying the free energy of the system and is related to the problem of inferring the unknown permittivity from discrete observations of the stationary solution, i.e., in the large-time regime. A third question concerns the constants. As recorded in \cref{rem:K_large}, the stability constant produced by the multiplier argument of \cite{Wan26} depends doubly exponentially on the final time, through the lower bound on the density, so that the resulting estimate is qualitative rather than quantitative. A stability constant with polynomial dependence on $T$ would make the theory considerably more informative in practice. Moreover, after establishing global-in-time solvability and combining it with the stability estimates, it would be interesting to determine whether the constants can be chosen independently of the observation time altogether.

Finally, there are several natural extensions. The two-species Debye system of \cite{BWN94}, in which two densities of opposite charge are coupled through the same potential, leads to an inverse problem of the same type and would be a natural next step. It would also be of interest to establish minimax lower bounds to assess the optimality of the rate obtained here, to treat continuous-time or trajectory-based observation schemes, and to analyze the ensemble Kalman filter, which, as suggested by our two-dimensional experiments, performs comparably to posterior sampling, but for which no contraction theory is presently available in this setting.

\paragraph*{Acknowledgements.}

G.A.P. is partially supported by an ERC-EPSRC Frontier Research Guarantee through Grant No. EP$/$X038645, ERC Advanced Grant No. 247031. A.M.S. acknowledges funding support from a USA Department of Defense Vannevar Bush Faculty Fellowship (award N00014-22-1-2790). A.Z. is supported by ``Centro di Ricerca Matematica Ennio De Giorgi'' and the ``Emma e Giovanni Sansone'' Foundation, and is a member of INdAM-GNCS. The authors thank Oliver R. A. Dunbar for his help with the \texttt{EnsembleKalmanProcesses} Julia package.

\paragraph*{AI acknowledgement.}

Claude (Anthropic) was used during the preparation of this manuscript. In particular, it was used to check the proofs of~\cref{pro:boundedness_u,pro:boundedness_u_below,lem:entropy_estimate,pro:Lipschitz,pro:backward_stability_estimate,cor:backward_stability_estimate}, it identified errors in earlier drafts and suggested corrections. It proposed the truncation argument used in the proof of \cref{pro:boundedness_u_below}. It was also used to verify the numerical parameters used in \cref{sec:numerics} and it assisted with the use of the \texttt{EnsembleKalmanProcesses} and \texttt{Gridap} packages in Julia. All mathematical statements, proofs, and numerical results were independently verified by the authors, who take full responsibility for them.

%\appendix
%\input{main_appendix.tex}

\newpage

\bibliographystyle{siamnodash}
\bibliography{bibliography.bib}

\end{document}